\newcommand{\RR}{\mathbb{R}}
\newcommand{\AAA}{\mathcal{A}}
\newcommand{\BBB}{\mathcal{B}}
\newcommand{\CCC}{\mathcal{C}}
\newcommand{\III}{\mathcal{I}}
\newcommand{\RRR}{\mathcal{R}}
\newcommand{\XXX}{\mathcal{X}}
\newcommand{\pp}{\mathbf{p}}
\newcommand{\qq}{\mathbf{q}}
\newcommand{\rr}{\mathbf{r}}
\newcommand{\uu}{\mathbf{u}}
\newcommand{\irr}[1]{\mathrm{PAIR}(#1)}
\newcommand{\perm}[1]{\mathrm{SYM}(#1)}
\newcommand{\Eperm}[1]{\mathrm{ALT}(#1)}
\newcommand{\Nof}[1]{\Pi_{#1}}
\newcommand{\Spinof}[1]{\mathrm{Spin}(#1)}
\newcommand{\Sof}[1]{\sigma_{#1}}
\newcommand{\Xof}[1]{m_{#1}}
\newcommand{\Yof}[1]{\tilde{\sigma}_{#1}}
\newcommand{\Pof}[1]{\mathcal{P}_{#1}}
\newcommand{\Zof}[1]{\mathcal{Z}_{#1}}
\newcommand{\Namings}[1]{\Gamma_{#1}}
\newcommand{\ClassLab}[1]{\RRR^{\mathrm{lab}}(#1)}
\newcommand{\ClassNon}[1]{\RRR(#1)}
\newcommand{\ExtClassLab}[1]{\RRR^{\mathrm{lab}}_{\mathrm{ex}}(#1)}
\newcommand{\ExtClassNon}[1]{\RRR_{\mathrm{ex}}(#1)}
\newcommand{\simperm}{\sim}
\newcommand{\eps}{\varepsilon}
\newcommand{\RHScase}[1]{\left\{\begin{array}{ll} #1\end{array}\right.}
\newcommand{\mtrx}[1]{\left(\begin{matrix} #1 \end{matrix}\right)}
\newcommand{\bmtrx}[1]{\left[\begin{matrix} #1 \end{matrix}\right]}
\newcommand{\LL}[2]{\begin{matrix}{#1}\\{#2}\end{matrix}}
\newcommand{\Ext}[4]{\mathrm{INS}_{#1,#2,#3}(#4)}
\newtheorem{thm}{Theorem}[section]
\newtheorem{prop}[thm]{Proposition}
\newtheorem{lem}[thm]{Lemma}
\newtheorem{defn}[thm]{Definition}
\newtheorem{cor}[thm]{Corollary}
\newtheorem{rmk}[thm]{Remark}
\newtheorem{exam}[thm]{Example}
\numberwithin{equation}{section}
\author[JF]{Jon Fickenscher}
\title[Extended Classes]{Labeled and Non-labeled Extended Rauzy Classes}
\begin{document}

	\begin{abstract}
		Multiple conventions have been adopted for denoting Interval Exchange Transformations (IETs). The ``non-labeled'' convention was the original, while the ``labeled'' convention has proven convenient when investigating Flat Surfaces as described by IETs. We establish the relationship between Extended Rauzy Classes, an equivalence class of combinatorial data related to IETs, under each convention.
	\end{abstract}

	\maketitle
	
	\section{Introduction}
	
	We begin as most papers on the subject tend to, an Interval Exchange Transformation $T$ is defined by objects: a permutation $\pi$ on $N$ symbols and a vector of lengths $\lambda$. However, how $\pi$ is actually defined varies and typically conforms to one of two conventions.
	In the ``non-labeled'' convention, $\pi$ is a permutation on $\{1,\dots,N\}$ that describes the reordering of sub-intervals $I_1,\dots, I_N$ of $I$, where $I_i$ is of length $\lambda_i$ for $\lambda = (\lambda_1,\dots,\lambda_N)\in\RR_+^N$.
	In the ``labeled'' convention, we instead have an alphabet $\AAA$ of $N$ symbols. The lengths instead are encoded by $\lambda \in \RR_+^\AAA$ and $\pi=(\pi_0,\pi_1)$ is actually a pair of bijections $\pi_\eps:\AAA\to\{1,\dots,N\}$ that denote the order of sub-intervals $\{I_a\}_{a\in\AAA}$ before and after $T$ acts on $I$, where each $I_a$ is length $\lambda_a$.
	
	Under either convention, Rauzy Induction (on the right) defines equivalence classes for $\pi$'s called \emph{Rauzy Classes} (see \cite{cRau1979}). An \emph{Extended Rauzy Class} is an equivalence class on $\pi$ by Rauzy Induction on both the left and right.
	
	In \cite{cBoi2013}, C. Boissy determined the degree of covering from a ``labeled'' Rauzy Class to its corresponding ``non-labeled'' Rauzy Class.
	
	In this work, we provide the degree of covering from a ``labeled'' Extended Rauzy Class to its corresponding ``non-labeled'' Extended Rauzy Class. In particular, we show the following.
	
		\begin{thm}\label{ThmMain}
			Let $\pi$ be a ``labeled'' irreducible permutation on $\AAA$, $N=\#\AAA$. If any Flat Surface that arises from $\pi$ has singularities with degrees $\ell_1,\dots,\ell_n$, then
				$$ \frac{\#\ExtClassLab{\pi}}{\#\ExtClassNon{\pi}} = \RHScase{N!, & \mbox{if }\ell_i=\ell_j\mbox{ for some }i\neq j\\ N!/2, & \mbox{all }\ell_j\mbox{ are distinct}.}$$
		\end{thm}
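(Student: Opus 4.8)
The plan is to reinterpret the ratio as the order of a subgroup of $\perm{\AAA}$ and then to identify that subgroup. Relabeling the alphabet commutes with Rauzy induction on both the left and the right, so $\perm{\AAA}\cong S_N$ acts freely on labeled permutations, with the fibers of the forgetful map to non-labeled permutations being precisely its orbits, each of size $N!$. This action permutes the labeled extended Rauzy classes lying over the fixed non-labeled class $\ExtClassNon{\pi}$, and it does so transitively: any two such classes contain lifts of one common non-labeled permutation, and these lifts differ by a relabeling that carries one class onto the other. Letting $H\leq\perm{\AAA}$ be the stabilizer of $\ExtClassLab{\pi}$, a count of lifts over a single non-labeled permutation yields $\#\ExtClassLab{\pi}=|H|\cdot\#\ExtClassNon{\pi}$; equivalently, $H$ is the monodromy group of the covering. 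The theorem therefore reduces to proving $H=\perm{\AAA}$ when two degrees coincide and $H=\Eperm{\AAA}$ when the degrees are pairwise distinct.

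I would first establish $\Eperm{\AAA}\subseteq H$ in all cases. As $\Eperm{\AAA}$ is generated by $3$-cycles, it is enough to realize each $3$-cycle of labels as the monodromy of a loop in the extended Rauzy diagram. The left moves, which are unavailable in the one-sided setting of \cite{cBoi2013}, provide enough freedom to transport a chosen label past two others and return to the starting non-labeled permutation; composing such transports produces the required $3$-cycles.

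The dichotomy is then controlled by a single $\mathbb{Z}/2$-valued invariant $\Spinof{\cdot}\in\{\pm1\}$ of a labeling, extracted from the singularity data. When $\ell_1,\dots,\ell_n$ are pairwise distinct the singularities are canonically ordered by degree, and this rigidity makes $\Spinof{\cdot}$ well defined on the correspondence between $\AAA$ and the corners of the singularities. The decisive computation is that every elementary Rauzy move, left or right, leaves $\Spinof{\cdot}$ unchanged, while an odd relabeling reverses it; hence $\Spinof{\cdot}$ is constant along $\ExtClassLab{\pi}$, every monodromy element is even, and $H\subseteq\Eperm{\AAA}$. Together with the lower bound this gives $H=\Eperm{\AAA}$ and the value $N!/2$.

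When instead $\ell_i=\ell_j$ for some $i\neq j$, the two equal-degree singularities are combinatorially interchangeable, and the construction of $\Spinof{\cdot}$ breaks down. Here I would exhibit an explicit loop in the extended Rauzy diagram whose monodromy swaps the corner systems of these two singularities while fixing the remaining labels; this swap is an odd permutation of $\AAA$, so it lies in $H\setminus\Eperm{\AAA}$, forcing $H=\perm{\AAA}$ and the value $N!$. The main obstacle is precisely the pair of facts at the heart of the dichotomy: that $\Spinof{\cdot}$ is invariant under both families of Rauzy moves when the degrees are distinct, and, dually, that an odd monodromy loop exists as soon as two degrees agree. Once these are in hand, the rest is bookkeeping.
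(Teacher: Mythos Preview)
Your overall framework matches the paper's: one reduces to computing the subgroup $H=\Namings{\pp}$ (the paper's ``renaming group''), shows $\Eperm{\AAA}\subseteq H$ always, and then decides between $\Eperm{\AAA}$ and $\perm{\AAA}$ according to whether $\Pof{\pp}$ is simple. Your lower bound and your construction of an odd monodromy when two degrees coincide are in the same spirit as the paper's Propositions~\ref{PropNamingsContainEven} and~\ref{PropFullGroup}, though the paper works via reduction to explicit block forms (Proposition~\ref{PropTypes}) and named switch moves rather than abstractly realizing $3$-cycles.

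The real gap is your upper bound in the distinct-degree case. You invoke an invariant $\Spinof{\cdot}$ ``of a labeling'' that is preserved by Rauzy moves and reversed by odd relabelings, but you never define it, and the paper's $\Spinof{\pp}$ cannot play this role: it is an invariant of the \emph{non-labeled} class and is only defined when every cycle in $\Pof{\pp}$ has odd length (Proposition~\ref{PropSpinDefined}). When all cycles are odd and pairwise distinct, the paper does get the bound cheaply, but via the centralizer: $\Zof{\pp}$ is generated by the cycles themselves, all even, so Corollary~\ref{CorEvenZof} gives $\Namings{\pp}\subseteq\Eperm{\AAA}$. The hard case is when $\Pof{\pp}$ contains even-length cycles (some $\ell_i$ odd); then $\Zof{\pp}$ contains odd permutations and no sign invariant is visible on $\pp$ itself. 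The paper resolves this by the prefix-insertion machinery of Section~3.6 and Lemma~\ref{LemCombinedInserts}: one enlarges the alphabet by inserting letters so that the resulting $\hat{\pp}$ has a single odd cycle, hence a well-defined spin, and then proves that $\Spinof{\hat{\pp}}=\Spinof{\hat{\pp}'}$ for two admissible insertions iff they differ by an \emph{even} permutation. Tracking a switch path from $\pp$ to $\pp\circ\nu$ through the inserted letters then forces $\nu$ to be even. This construction is the technical heart of the argument, and your sketch neither supplies it nor an alternative; without it the claim ``an odd relabeling reverses $\Spinof{\cdot}$'' is simply unproven, and indeed undefined, in the even-cycle case.
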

	
	\section{Definitions}\label{SEC_DEF}
	
		In this section, we will provide notation, conventions and definitions used in this paper. Once we establish our use of the symmetric and alternating groups on finite alphabets. We will then define ``labeled'' permutations, which we will call \emph{pairs}, and define Rauzy Induction and (Extended) Rauzy Classes in this context. We then define the set of \emph{renamings} of a pair, the main focus of our study.
		
		From \cite{cFick2012}, we recall \emph{switch moves}, a sort of acceleration of Rauzy Induction. We also state some key features of switches as provided in that paper. Given a pair (``labeled'' permutation) $\pp$, we define objects $\Sof{\pp}$, $\Nof{\pp}$ and $\Spinof{\pp}$. These objects relay information concerning the flat surfaces associated to $\pp$. We will mention this relationship for the reader with background in this area.
		
		We end the section by restating Theorem \ref{ThmMain} in terms of these objects.

		\subsection{The symmetric and alternating groups}
	
			Given a group $G$, we denote by $<b,c>$ the subgroup of $G$ generated by $b,c\in G$. We extend this definition to allow $<b,H>$ to be the subgroup generated by $b\in G$ and the elements in $H\subseteq G$. We will therefore allow generating sets to be combinations of elements and subgroups. 
			Given finite alphabet $\AAA$, let $\perm{\AAA}$ denote the \emph{symmetric group} of permutations on $\AAA$. Let $\Eperm{\AAA}$ denote the \emph{alternating subgroup} of $\perm{\AAA}$.
			For $\mu,\nu\in\perm{\AAA}$, we use composition as group multiplication,
			      \begin{equation}
				    \mu\cdot \nu = \mu\circ\nu.
			      \end{equation}
			We often represent elements of $\perm{\AAA}$ in \emph{cycle notation}.
		For example,
			if $\AAA = \{a,b,c,d,e\}$,
			      $
				    \mu = (a,b,c,d)\mbox{ and } \nu = (a,d,e)(b,c),
			      $
			then
			      $
				     \mu\nu = (b,d,e)\mbox{ and } \nu\mu = (a,c,e).
			      $
			
		\begin{rmk}
			By using the natural embedding, we will always assume $\perm{\BBB}\subseteq \perm{\AAA}$ for subalphabet $\BBB\subseteq \AAA$.
		\end{rmk}

		\subsection{Pairs, Rauzy Induction and Classes}

		\begin{defn}
		      Given finite alphabet $\AAA$ of size $N$, $\irr{\AAA}$ is the collection of pairs $\pp = (p_0,p_1)$ of bijections $p_\eps:\AAA \to \{1,2,\dots,N\}$, $\eps\in\{0,1\}$, that satisfy
		      	\begin{equation}\label{EqIrr}
		      		p_1\circ p_0^{-1}(\{1,2,\dots,k\}) = \{1,2,\dots,k\}
		      	\end{equation}
		      if and only if $k=N$.
		\end{defn}
		This is typically considered to be collection of \textit{irreducible permutations} when considering permuations on alphabet $\AAA$. See \cite{cBuf2006}, \cite{cMarMouYoc2005} and \cite{cVia2006} for uses of this convention. However, we wish to make the distinction between such ``permutations'' and bijections on $\{1,\dots,N\}$.

		If $\nu:\AAA'\to\AAA$ is a bijection and $\pp=(p_0,p_1)\in\irr{\AAA}$, then we define
		  \begin{equation}
		      \pp\circ\nu = (p_0\circ \nu,p_1\circ \nu) \in \irr{\AAA'}.
		  \end{equation}

		Because this paper is focused on combinatorics, we will simply define Rauzy Induction from the left and right as bijections on $\irr{\AAA}$. If $\pp\in\irr{\AAA}$, 
		      \begin{equation*}p_0^{-1}(1) \neq p_1^{-1}(1)\mbox{ and }p_0^{-1}(N)\neq p_1^{-1}(N),\end{equation*}
		where  $N=\#\AAA$, by Equation \eqref{EqIrr}. Therefore the following are well defined. Also, the fact that Ruazy Induction on $\pp\in\irr{\AAA}$ yields another element of $\irr{\AAA}$ is well known and will be accepted in the definition. We refer the reader to \cite{cRau1979} and \cite{cVia2006} for  definitions of Rauzy Induction motivated in terms of IETs.

		\begin{defn}
		      Let $\pp=(p_0,p_1)\in\irr{\AAA}$.

		      The result of \emph{type $\eps$ (right) Rauzy Induction} on $\pp$, $\eps\in\{0,1\}$, is $\pp'=(p_0',p_1')\in \irr{\AAA}$ given by
			    \begin{equation}
				  p'_\eps = p_\eps\mbox{ and }
				      p_{1-\eps}'(b) = \RHScase{p_{1-\eps}(b), & p_{1-\eps}(b) \leq p_{-1\eps}(z_\eps),\\
						p_{1-\eps}(z_\eps)+1, & b=z_{1-\eps},\\
						p_{1-\eps}(b)+1, & p_{1-\eps}(z_\eps)<p_{1-\eps}(b)<N,}
			    \end{equation}
		      where $z_\eps = p_\eps^{-1}(N)$ and $N=\#\AAA$.

		      The result of \emph{type $\eps$ left Rauzy Induction} on $\pp$, $\eps\in\{0,1\}$, is $\pp''=(p_0'',p_1'')\in \irr{\AAA}$ given by
			    \begin{equation}
				  p''_\eps = p_\eps\mbox{ and }
				      p_{1-\eps}''(b) = \RHScase{p_{1-\eps}(b), & p_{1-\eps}(b) \geq p_{-1\eps}(a_\eps),\\
						p_{1-\eps}(z_\eps)-1, & b=a_{1-\eps},\\
						p_{1-\eps}(b)-1, & 1<p_{1-\eps}(b)<p_{1\eps}(a_\eps),}
			    \end{equation}
		      where $a_\eps = p_\eps^{-1}(1)$.
		\end{defn}

		From the definition of induction, for any $\pp\in \irr{\AAA}$ and $\pp' = \omega \pp$ for choice of induction $\omega$, there exists $n>0$ so that $\omega^n \pp' = \pp$. It follows that there is a choice of inductive moves from $\pp$ to $\qq$ if and only if there is a choice of moves from $\qq$ to $\pp$. Therefore the following definition does indeed describe \emph{equivalence classes} under induction.

		\begin{defn}
		      Let $\pp\in\irr{\AAA}$.
			
		      The \emph{(``labeled'') Rauzy Class} of $\pp$, $\ClassLab{\pp}\subset\irr{\AAA}$, is the orbit of both types of right Rauzy Induction on $\pp$. The \emph{(``labeled'') Extended Rauzy Class} of $\pp$, $\ExtClassLab{\pp}\subset\irr{\AAA}$, is the orbit of both types of left and right Rauzy Induction.

		      The \emph{``non-labeled'' Rauzy Class} (resp. \emph{Extended Rauzy Class}) of $\pp$, $\ClassNon{\pp}$ (resp. $\ExtClassNon{\pp}$), is the image of $\ClassLab{\pp}$ (resp. $\ExtClassLab{\pp}$) under the map $\pp \mapsto p_1\circ \pi_0^{-1}$. These are subsets of $\perm{\{1,\dots,N\}}$ where $N=\#\AAA$.
		\end{defn}

		Because these sets are all equivalence classes, they do not depend on initial choice of $\pp$. For example, $\ClassLab{\qq} = \ClassLab{\pp}$ for any $\qq\in \ClassLab{\pp}$.

		\begin{defn}
		  $\pp\in\irr{\AAA}$ is \emph{standard} if and only if
		  	\begin{equation}
		  		p_0\circ p_1^{-1}(1) = p_1\circ p_0^{-1}(1) = N
		  	\end{equation}
		  	where $N=\#\AAA$.
		\end{defn}

		The following result was shown first in \cite[Section 35]{cRau1979}.
		
		\begin{prop}
		      Every Rauzy Class contains a standard pair.
		\end{prop}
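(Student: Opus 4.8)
The plan is to work entirely within the right-induction orbit $\ClassLab{\pp}$ and to exploit that each right move only relocates the \emph{last} entry of one row. Write $a_\eps = p_\eps^{-1}(1)$ and $z_\eps = p_\eps^{-1}(N)$ for the first and last letters of row $\eps$, where $N=\#\AAA$. Reading the definition of type $\eps$ right induction, the entry in position $1$ of each row is never touched: row $\eps$ is fixed outright, while in row $1-\eps$ the reinserted loser lands in position $p_{1-\eps}(z_\eps)+1\geq 2$ and every shift occurs in positions $>1$. Hence $a_0$ and $a_1$ are constant on a Rauzy class, and \eqref{EqIrr} with $k=1$ gives $a_0\neq a_1$. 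Since $\pp$ is standard precisely when $z_1=a_0$ and $z_0=a_1$, the task reduces to bringing the (fixed) letter $a_0$ to the last position of row $1$ and $a_1$ to the last position of row $0$.

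Next I would establish two reduction steps. Suppose we have reached a pair with $z_1=a_0$; since $a_0$ occupies position $1$ of row $0$, a type $1$ move keeps $a_0$ last in row $1$ and reinserts the former $z_0$ into position $2$ of row $0$, cyclically advancing the contents of positions $2,\dots,N$ of row $0$. As $a_1\neq a_0$, the letter $a_1$ sits in one of these positions, so after at most $N-1$ such moves it reaches position $N$, i.e.\ $z_0=a_1$, and the pair is standard. The mirror statement---if $z_0=a_1$, then repeated type $0$ moves cyclically advance positions $2,\dots,N$ of row $1$ and let us place $a_0$ last---is identical. Thus it suffices to reach \emph{either} $z_1=a_0$ \emph{or} $z_0=a_1$.

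To force one of these I would use the monovariant $\Phi(\pp)=p_1(a_0)+p_0(a_1)$. A direct check of the induction formula shows that while $p_1(a_0)<N$ a type $0$ move leaves the relevant prefix of row $1$ intact and can only move $a_0$ later in row $1$, so $p_1(a_0)$ does not decrease; a type $1$ move fixes row $1$ entirely. By symmetry $p_0(a_1)$ is likewise non-decreasing, so $\Phi$ is non-decreasing along the forward orbit. Because $\ClassLab{\pp}$ is finite, $\Phi$ stabilizes, and if stabilization occurs with $p_1(a_0)=N$ or $p_0(a_1)=N$ we are done by the reduction steps.

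The crux, and the step I expect to be hardest, is excluding the possibility that $\Phi$ stabilizes with both positions strictly below $N$. On the tail where $\Phi$ is constant, $p_1(a_0)\equiv P$ and $p_0(a_1)\equiv Q$ are separately constant with $P,Q<N$. One then checks that the letter set $B$ filling positions $1,\dots,P$ of row $1$ and the set $C$ filling positions $1,\dots,Q$ of row $0$ are each preserved by every move of the tail---since $P,Q$ are frozen, every winner, loser, and shifted letter stays strictly beyond these prefixes---and that the letters of $B\cap C$ (which contain $a_0$ and $a_1$) are never a winner or a loser. Using the recurrence of induction to realize the letters of the ``active'' suffixes as winners, this rigidity must be pushed to produce a proper subset of $\AAA$ that is simultaneously an initial segment of row $0$ and of row $1$, contradicting \eqref{EqIrr}; extracting this common initial segment from a frozen orbit is the delicate point. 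For small $N$ one sees the mechanism directly: a pair with neither $z_1=a_0$ nor $z_0=a_1$ is already forced to violate \eqref{EqIrr}, which is exactly the phenomenon the general argument must capture.
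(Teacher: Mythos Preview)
The paper does not prove this proposition; it cites Rauzy's original 1979 paper. So there is no in-house argument to compare against, and your proposal stands or falls on its own.

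Your reductions are correct. The first letters $a_0=p_0^{-1}(1)$ and $a_1=p_1^{-1}(1)$ are indeed fixed by right induction, and once $z_1=a_0$ repeated type~$1$ moves cyclically rotate positions $2,\dots,N$ of row $0$ (since $p_0(z_1)=p_0(a_0)=1$), so $a_1$ can be brought to the end. The monovariant $\Phi=p_1(a_0)+p_0(a_1)$ is also handled correctly: each summand is non-decreasing under either type of move while it is $<N$, and since each move has finite order on each pair, if neither summand ever reaches $N$ then both are constant on the whole class.

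The gap is exactly where you flag it. You stop at the observation that the prefixes $B=p_1^{-1}\{1,\dots,P\}$ and $C=p_0^{-1}\{1,\dots,Q\}$ are invariant and that $B\cap C$ avoids winners and losers, and then gesture at ``recurrence'' to extract a common initial segment. As written this is not a proof. What is missing is the following clean step: let $F$ be the set of letters that are \emph{never} last in either row. One checks that $B\cup C\subseteq F$ (since any $z_0$ has $p_1(z_0)>P$ and $p_0(z_0)=N>Q$, and symmetrically for $z_1$), so $F\neq\emptyset$; and for every $d\in F$ both coordinates $p_0(d),p_1(d)$ are constant. Now let $M=\max_{d\in F}p_1(d)$. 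For every pair one has $p_1(z_0)>M$ (else a type~$0$ move would move the maximizing $d$), and therefore any letter $e$ with $p_1(e)<M$ is frozen in row~$1$ by the same inequality, hence can never be $z_0$ or $z_1$, hence lies in $F$. This forces $p_1(F)=\{1,\dots,|F|\}$, and symmetrically $p_0(F)=\{1,\dots,|F|\}$, so $F$ is a common initial segment of size $<N$, contradicting \eqref{EqIrr}. Once you replace the heuristic final paragraph with this argument, the proof is complete.
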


		\subsection{Remanings}

		In order to prove Theorem \ref{ThmMain}, we will define an object that describes the covering of $\ExtClassNon{\pp}$ by $\ExtClassLab{\pp}$. We first motivate the definition: for $\pp,\qq\in\irr{\AAA}$
		    \begin{equation}
			  q_1\circ q_0^{-1} = p_1\circ p_0^{-1} \iff \qq = \pp \circ \nu \mbox{ for some }\nu\in\perm{\AAA}.
		    \end{equation}
		In other words, $\pp$ and $\qq$ are in the same fiber over $\ExtClassNon{\pp}$ if and only if $\pp$ and $\qq$ are the same \emph{up to a renaming}. We want to describe all such $\nu$ that may occur for $\pp$ such that $\pp\circ \nu\in\ExtClassLab{\pp}$.

		\begin{defn}
		    If $\pp=(p_0,p_1)\in\irr{\AAA}$, then
			  $$ \Namings{\pp} := \left\{\nu\in\perm{\AAA}: \qq = \pp\circ\nu,~\qq\in\ExtClassLab{\pp}\right\}$$
		    is the set of \emph{renamings} of $\pp$ in $\ExtClassLab{\pp}$.
		\end{defn}

		The following lemma establishes that renamings actually form a group. Furthermore, this group does not depend on which representative $\pp\in\irr{\AAA}$ is chosen.

		\begin{lem}\label{LemNamingsClass}
		    Let $\pp\in\irr{\AAA}$. $\Namings{\pp}$ is a subgroup of $\perm{\AAA}$, and if $\qq\in\ExtClassLab{\pp}$, then $\Namings{\pp} = \Namings{\qq}$.
		\end{lem}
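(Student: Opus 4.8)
first that $\Namings{\pp}$ is closed under the group operations, and second that it is invariant under passing to another representative $\qq \in \ExtClassLab{\pp}$. The key structural fact I would exploit is that composition by a fixed renaming on the right commutes with Rauzy Induction. More precisely, if $\omega$ denotes any of the four inductive moves (left/right, type $0$/type $1$) and $\nu \in \perm{\AAA}$, then I expect $\omega(\pp \circ \nu) = (\omega\pp)\circ\nu$. This is because each inductive move is defined purely through the values $p_\eps(b)$ and the distinguished letters $z_\eps = p_\eps^{-1}(N)$ and $a_\eps = p_\eps^{-1}(1)$; precomposing with a bijection $\nu:\AAA\to\AAA$ merely relabels which letter of $\AAA$ sits in each position, so the combinatorial operation on positions is unchanged and the relabeling is carried along. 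Verifying this commutation for one of the four move-types (say type $\eps$ right induction) is the crux; the other three follow by the same bookkeeping.

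\emph{Group structure.} Granting the commutation relation, I would argue as follows. The identity $\mathrm{id}_{\AAA}$ lies in $\Namings{\pp}$ since $\pp\circ\mathrm{id} = \pp \in \ExtClassLab{\pp}$. For closure under inverses, suppose $\nu\in\Namings{\pp}$, so $\qq = \pp\circ\nu\in\ExtClassLab{\pp}$; then there is a word $\omega$ in the inductive moves with $\omega\pp = \qq = \pp\circ\nu$, and applying the commutation relation to the word $\omega$ gives $\omega(\pp\circ\nu^{-1}) = (\omega\pp)\circ\nu^{-1} = \pp\circ\nu\circ\nu^{-1} = \pp$, so $\pp\circ\nu^{-1}$ induces to $\pp$ and hence lies in $\ExtClassLab{\pp}$, i.e.\ $\nu^{-1}\in\Namings{\pp}$. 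For closure under products, if $\nu,\mu\in\Namings{\pp}$ with $\pp\circ\nu = \omega_1\pp$ and $\pp\circ\mu = \omega_2\pp$, then $\pp\circ(\nu\mu) = (\pp\circ\nu)\circ\mu = (\omega_1\pp)\circ\mu = \omega_1(\pp\circ\mu) = \omega_1\omega_2\pp \in \ExtClassLab{\pp}$, so $\nu\mu\in\Namings{\pp}$. (Here I am using that $\ExtClassLab{\pp}$, being an orbit under the moves, is closed under applying any word $\omega$.)

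\emph{Independence of representative.} Let $\qq\in\ExtClassLab{\pp}$; since the extended class is an equivalence class, $\ExtClassLab{\qq} = \ExtClassLab{\pp}$, so the two renaming sets are defined relative to the same target orbit. I would show $\Namings{\pp}\subseteq\Namings{\qq}$ and the reverse inclusion by symmetry. Write $\qq = \omega\pp$ for some word $\omega$. If $\nu\in\Namings{\pp}$, then $\pp\circ\nu\in\ExtClassLab{\pp}$, and I want $\qq\circ\nu\in\ExtClassLab{\qq}=\ExtClassLab{\pp}$. Using commutation, $\qq\circ\nu = (\omega\pp)\circ\nu = \omega(\pp\circ\nu)$, which is the image of the element $\pp\circ\nu\in\ExtClassLab{\pp}$ under the word $\omega$, hence again lies in $\ExtClassLab{\pp}$. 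Thus $\nu\in\Namings{\qq}$, giving $\Namings{\pp}\subseteq\Namings{\qq}$; reversing the roles of $\pp$ and $\qq$ (using that $\omega$ is invertible on the orbit) yields equality.

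The main obstacle is the commutation lemma $\omega(\pp\circ\nu) = (\omega\pp)\circ\nu$ for each inductive move $\omega$. Once this is in hand every remaining step is a short formal manipulation, so I would isolate it as a preliminary observation—ideally stated and checked directly from the induction formulas before proving the lemma proper.
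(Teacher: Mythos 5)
Your proposal is correct and follows essentially the same route as the paper: both hinge on the commutation relation $\omega(\pp\circ\nu)=(\omega\pp)\circ\nu$ for each inductive move $\omega$ (which the paper likewise asserts "by verification"), and both then deduce invariance of $\Namings{\pp}$ along the extended class and closure under products by the same formal manipulations. The only cosmetic difference is that the paper proves $\Namings{\pp}=\Namings{\qq}$ first and uses it to get closure under multiplication (leaving inverses to finiteness), whereas you derive the subgroup axioms directly from the commutation; this does not change the substance of the argument.
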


		\begin{proof}
			We first show that $\Namings{\pp}=\Namings{\qq}$. By definition, it suffices to show this statement if $\qq$ is the result of exactly one move Rauzy Induction.
			
			By verification, we see that if $\rr = \pp\circ \nu\in\ExtClassLab{\pp}$ and $\uu$ is the result of the same move of Ruazy Induction on $\rr$, then $\uu = \qq\circ \nu$. Because $\uu = \qq\circ \nu \in \ExtClassLab{\pp} = \ExtClassLab{\qq}$, $\nu\in\Namings{\qq}$. We then conclude that $\Namings{\pp}\subseteq \Namings{\qq}$. Equality follows as Extended Rauzy Classes are two-way connected.
			
			We now may verify that $\Namings{\pp}$ is a subgroup of $\perm{\AAA}$. Because $\pp \in \ExtClassLab{\pp}$, the identity element belongs to $\Namings{\pp}$. Suppose $\mu,\nu\in\Namings{\pp}$. Then $\qq=\pp\circ \mu\in \ExtClassLab{\pp}$. Because $\nu\in\Namings{\pp}=\Namings{\qq}$, $\qq\circ \nu = \pp\circ(\mu\nu)\in\ExtClassLab{\qq}=\ExtClassLab{\pp}$ or $\mu\nu\in\Namings{\pp}$.
		\end{proof}

		\subsection{Switch Moves}
		
		Switch moves were defined in \cite{cFick2012}. We will mention relevant results concerning these moves here, as they will be recalled in later sections. The first type of switch move travels from one standard $\pp$ to another in $\ClassLab{\pp}$.

		\begin{defn}
		      Let $\pp=(p_0,p_1)\in\irr{\AAA}$ be standard. If there exists $b,c\in\AAA$ so that
			    \begin{equation}
				  p_0(b) < p_0(c) \mbox{ and } p_1(b)<p_1(c),
			    \end{equation}
		      we express $\pp$ as
			    \begin{equation*}
			      \pp = \mtrx{\LL{a}{z}\; \LL{\overline{u_0}}{\overline{u_1}}\;\LL{b}{b}\;\LL{\overline{v_0}}{v_1}\;\LL{c}{c}\;\LL{\overline{w_0}}{\overline{w_1}}\;\LL{z}{a}}.
			    \end{equation*}
		      A \emph{$\{b,c\}$-switch move} on $\pp$ is a choice of Rauzy Induction moves on $\pp$ resulting in $\pp'\in \ClassLab{\pp}$, where
			    \begin{equation*}
			      \pp' = \mtrx{\LL{a}{z}\; \LL{\overline{v_0}}{v_1}\;\LL{c}{c}\;\LL{\overline{u_0}}{\overline{u_1}}\;\LL{b}{b}\;\LL{\overline{w_0}}{\overline{w_1}}\;\LL{z}{a}}.
			    \end{equation*}
		      We call such a switch an \emph{inner switch}.
		\end{defn}

		The second type of switch move travels from standard $\pp$ to another in $\ExtClassLab{\pp}$. Note that the resulting pair \emph{cannot} be an element of $\ClassLab{\pp}$ as one of the letters that begins a row differs from the corresponding row in $\pp$.

		\begin{defn}
		    Let $\pp=(p_0,p_1)\in\irr{\AAA}$ be standard, $a=p_0^{-1}(1)$, $z = p_1^{-1}(1)$, and $d\in\AAA\setminus\{a,z\}$. Express $\pp$ as
			    \begin{equation}
				  \pp = \mtrx{\LL{a}{z}\;\LL{\overline{u_0}}{\overline{u_1}}\;\LL{d}{d}\;\LL{\overline{v_0}}{\overline{v_1}}\;\LL{z}{a}}.
			    \end{equation}

		    An \emph{$\{a,d\}$-switch move} on $\pp$ is any choice of left and/or right Rauzy Induction moves on $\pp$ resulting in $\pp'' \in \ExtClassLab{\pp}$, where
			  \begin{equation*}
				\pp'' = \mtrx{\LL{d}{z}\;\LL{\overline{v_0}}{\overline{v_1}}\;\LL{a}{a}\;\LL{\overline{u_0}}{\overline{u_1}}\;\LL{z}{d}}.
			  \end{equation*}

		    A \emph{$\{d,z\}$-switch move} on $\pp$ is any choice of left and/or right Rauzy Induction moves on $\pp$ resulting in $\pp''' \in \ExtClassLab{\pp}$, where
			  \begin{equation*}
				\pp''' = \mtrx{\LL{a}{d}\;\LL{\overline{v_0}}{\overline{v_1}}\;\LL{z}{z}\;\LL{\overline{u_0}}{\overline{u_1}}\;\LL{d}{a}}.
			  \end{equation*}
		    We call either switch an \emph{outer switch}.
		\end{defn}

		The following result from \cite{cFick2012} tells us that in order to find all standard pairs in a Rauzy Class or Extended Rauzy Class, it suffices to consider all standard pairs connected by switch moves.
		
		\begin{prop}
			Let $\pp,\qq\in\irr{\AAA}$ be distinct standard pairs. Then $\qq\in \ClassLab{\pp}$ if and only if $\pp$ and $\qq$ are related by an inner switch path, one composed of only inner switches. Also, $\qq\in\ExtClassLab{\pp}$ if and only if they are related by a switch path, which may be composed of both inner and outer switches.
		\end{prop}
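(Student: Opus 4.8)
The plan is to treat the two equivalences separately; in each the reverse implication carries all the content, while the forward implication is essentially a restatement of the definitions. For the forward directions, recall that a $\{b,c\}$-inner switch is, by definition, a choice of right Rauzy moves whose output $\pp'$ lies in $\ClassLab{\pp}$, and each outer switch is a choice of left and/or right moves whose output lies in $\ExtClassLab{\pp}$. Since these classes are the orbits of the corresponding induction moves, they are genuine equivalence classes, so membership is preserved under composition: an inner switch path from $\pp$ to $\qq$ gives $\ClassLab{\qq}=\ClassLab{\pp}$ step by step, hence $\qq\in\ClassLab{\pp}$, and a path using inner and outer switches gives $\qq\in\ExtClassLab{\pp}$ (using $\ClassLab{\pp}\subseteq\ExtClassLab{\pp}$). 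No further input is needed here.

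For the reverse direction in the Rauzy-class case, suppose $\pp,\qq$ are standard with $\qq\in\ClassLab{\pp}$. I would fix a path of right Rauzy moves from $\pp$ to $\qq$ and record the standard pairs it meets, $\pp=\pp^{(0)},\pp^{(1)},\dots,\pp^{(m)}=\qq$, chosen so that no standard pair occurs strictly between consecutive terms. It then suffices to prove the local statement that two standard pairs joined by a Rauzy segment containing no intermediate standard pair are related by a single inner switch; concatenating then realizes the full inner switch path. The crux is a local analysis of Rauzy dynamics started at a standard pair
$$\pp = \mtrx{\LL{a}{z}\; \LL{\overline{u_0}}{\overline{u_1}}\;\LL{b}{b}\;\LL{\overline{v_0}}{v_1}\;\LL{c}{c}\;\LL{\overline{w_0}}{\overline{w_1}}\;\LL{z}{a}},$$
following the boundary letters $a,z$ until the pair is standard once more and checking that the returning pair is exactly the block-swapped $\pp'$ of the inner-switch definition.

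For the extended case the same factorization through standard pairs applies, but the connecting segments may now use left moves and may move letters into or out of the first and last positions of the two rows. The analysis then splits according to the behaviour of the boundary pair $\{a,z\}$: if both boundary letters are preserved across the segment, it is an inner switch exactly as above; if one of them is displaced, the segment must reproduce one of the two outer switches, the $\{a,d\}$- or $\{d,z\}$-switch according to whether $a$ or $z$ is the displaced letter. I expect the main obstacle to be precisely this first-return analysis under combined left/right induction: one must show that every standard-to-standard segment falls into exactly one of these cases and that, in the displaced case, the returning standard pair agrees with the prescribed outer-switch output $\pp''$ or $\pp'''$. Granting this, concatenating the segments yields the required switch path and establishes both equivalences.
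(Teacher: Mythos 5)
The forward implications are indeed immediate from the definitions, and your idea of factoring a connecting path through the standard pairs it visits is a natural framing. But the entire content of the proposition is concentrated in your ``local statement,'' which you do not prove and which, in the strong form you need, is doubtful. You assert that a segment of Rauzy moves joining two standard pairs, with no standard pair strictly between them, must realize a single inner switch (resp.\ a single inner or outer switch in the extended case). A switch is defined by its \emph{output} --- a specific transposition of the blocks $\overline{u}$ and $\overline{v}$ --- not by a canonical path, so there is no a priori reason that an arbitrary first return to the standard locus should land on a pair of exactly that form: a minimal segment could return to a pair obtained only by composing several switches, or conceivably to a standard pair not of switch type at all. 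Ruling this out is precisely the combinatorial theorem being claimed, and your proposal defers it entirely (``I expect the main obstacle to be precisely this first-return analysis''). The extended case is worse: left induction does not preserve the first letters of the rows, so even identifying which pairs along the segment are standard, and which letters play the roles of $a$ and $z$ at the return time, requires the analysis you are postponing; and the outer switches have very rigid outputs ($\pp''$ and $\pp'''$ both replace a boundary letter by $d$ \emph{and} transpose the two blocks), so ``the segment must reproduce one of the two outer switches'' is an assertion, not an argument.

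For comparison, the paper gives no proof of this proposition at all: it is imported from \cite{cFick2012}, where switch moves are constructed as explicit sequences of induction steps and the connectivity of standard pairs by such paths is established by direct combinatorial work. So your outline is not yet a proof, and completing it along the lines you sketch would require carrying out the full first-return classification --- which is essentially the content of the cited result --- or simply invoking \cite{cFick2012} as the paper does.
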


		\subsection{Surface Properties}
 
		The following are combinatorial objects that describe various properties related to Translation Surfaces that arise from $\pp$. The remarks included are meant to relate the information here directly to those properties. The reader may choose to skip these remarks if desired, as the combinatorial information is the only necessary component for this work.

		\begin{defn}
			If $\pp=(p_0,p_1)\in\irr{\AAA}$, $N=\#\AAA$, then $\Sof{\pp}\in \perm{\AAA}$ is given by
				\begin{equation}\label{eq_SofPi}
					\Sof{\pp}(a) = \RHScase{p_0^{-1}(1), & p_1(a)=1,\\
						p_0^{-1}\Big(p_0\big(p_1^{-1}(N)\big)+1\Big), & p_1(a) = p_1(p_0^{-1}(N))+1,\\
						p_0^{-1}\Big(p_0\big(p_1^{-1}\big(p_1(a)-1\big)\big)+1\Big), & \mathrm{otherwise.}}
				\end{equation}
		\end{defn}

		\begin{rmk}\label{RMK_SofP}
		      This is similar in construction to $\sigma$ as defined in \cite[Section 2]{cVe1982}. The object here is designed to describe the following (given a surface associated to $\pp$): for every $b\in \AAA \setminus \{a\}$, $p_0(a) = 1$, travel clock-wise around the left vertex $v$ of the edge labeled $b$ until first arriving at a vertical saddle connection of $v$. Name a small segment that begins at $v$ of this sadde connection $b$. For the letter $a$, instead travel until reaching the first horizontal saddle connection, and name a small segment of this connection beginning at $v$ with the letter $a$. See Figure \ref{FIG_S_OF_PI}a for an example.

		      Then $\Sof{\pp}$ describes the order these named segments are encountered when moving in a clock-wise direction around each vertex $v$ (see Figure \ref{FIG_S_OF_PI}b for an example). Note that if there are $\ell+1$ different \emph{vertical} segments beginning at vertex $v$, $v$ represents a \emph{singularity of degree} $\ell$. Therefore, the cycles of $\Sof{\pp}$ are in bijective correspondence to the singularities of the surface.

		      While the letter $a$ is not needed in the above description, it does ``mark'' a singularity. This choice of singularity remains fixed throughout pairs in a Rauzy Class (see \cite{cBoi2012}).
		\end{rmk}

		\begin{figure}[t!]
		    \begin{center}
		    \subfigure[A surface related to $\pp$ with named segments.]{\includegraphics[width = .51\textwidth]{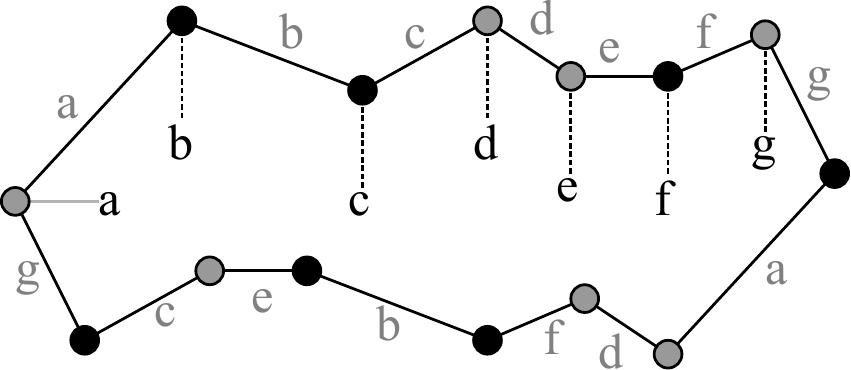}}
		    \hfill
		    \subfigure[Picture of $\Sof{\pp}$.]{\includegraphics[width = .279\textwidth]{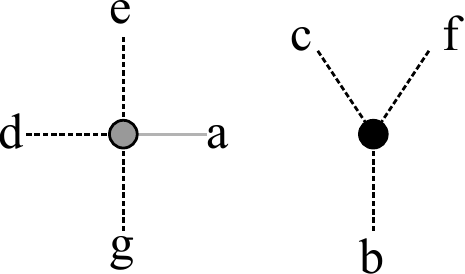}}
		    \end{center}
		    \caption{A surface associated to $\pp$ from Example \ref{ex_SofPi} and the permtuation $\Sof{\pp}$.}\label{FIG_S_OF_PI}
		\end{figure}

		
		\begin{exam}\label{ex_SofPi}
			If
				$$ \pp = \mtrx{\LL{a}{g}\;\LL{b}{c}\;\LL{c}{e}\;\LL{d}{b}\;\LL{e}{f}\;\LL{f}{d}\;\LL{g}{a}},$$
			then $\Sof{\pp} = (a,e,d,g)(b,f,c)$.
		\end{exam}

		As indicated in Remark \ref{RMK_SofP}, we will want to extract two properties from $\Sof{\pp}$: the letter $a\in \AAA$ so that $p_0(a)=1$ and the action of $\Sof{\pp}$ restricted to $\AAA\setminus \{a\}$. The object $\Nof{\pp}$ defined here achieves these objectives.
		
		\begin{defn}
			If $\pp=(p_0,p_1)\in\irr{\AAA}$, then $\Nof{\pp}=(\Xof{\pp},\Yof{\pp})\in \AAA \times \perm{\AAA}$ is given by $\Xof{\pp} = p_0^{-1}(1)$ and $\Yof{\pp}$ is given by
				\begin{equation}
					\Yof{\pp}(a) = \RHScase{\Xof{\pp}, & a = \Xof{\pp},\\ 
					      \Sof{\pp}\circ\Sof{\pp}(a), & \Sof{\pp}(a) = \Xof{\pp},\\
					      \Sof{\pp}(a), & \mathrm{otherwise.}}
				\end{equation}
			We will denote $\Nof{\pp}$ by
				$$ \Nof{\pp} = [\Xof{\pp}] \Yof{\pp}$$
			where $\Yof{\pp}$ is given in cycle notation and the trivial cycle $(\Xof{\pp})$ is ommitted as $[\Xof{\pp}]$ is already included.
		\end{defn}
		
		\begin{exam}
			If $\pp$ is from Example \ref{ex_SofPi}, then $\Nof{\pp} = [a](b,f,c)(e,d,g)$.
		\end{exam}

		\begin{rmk}\label{RMK_CYCLES}
		    By Remark \ref{RMK_SofP}, we conclude that the cycles of $\Nof{\pp}$ (except $[\Xof{\pp}]$) now relate uniquely to a singularity \emph{and} the cycle has length $\ell+1$, where $\ell$ is the degree of the singularity.
		\end{rmk}

		There statement of Theorem \ref{ThmMain} uses the numbers of degrees of singualrities to determine which value $N!/2$ or $N!$ is achieved. By Remarks \ref{RMK_SofP} and \ref{RMK_CYCLES}, we may instead speak of the length of cycles in $\Nof{\pp}$.

		\begin{defn}
		      If $\pp\in\irr{\AAA}$, then $\Pof{\pp}$ is the unordered list of cycles lengths with multiplicity in $\Yof{\pp}$, excluding $(\Xof{\pp})$. We call $\Pof{\pp}$ \emph{simple} if each value is multiplicity one.
		\end{defn}
		
		\begin{exam}
			If $\pp$ is from Example \ref{ex_SofPi}, then $\Pof{\pp} = \{3,3\}=\{3^2\}$ and is not simple.
		\end{exam}

		\begin{rmk}\label{RMK_PofP}
		      We now see directly that $\Pof{\pp} = \{\ell_1+1,\dots, \ell_n+1\}$ where $\ell_1,\dots,\ell_n$ are the degrees of the singularities with multiplicity that occur in any Translation Surface associated to $\pp$.
		\end{rmk}

		In this work, we will use the \emph{parity of spin structure}, $\Spinof{\pp}$, for a pair $\pp$. This is a value in $\{0,1\}$ mentioned in \cite{cKonZor2003} that distinguished Extended Rauzy Classes. We will not discuss the formal definition here but rather encourage the reader to look at \cite{cKonZor2003} and \cite{cZor2008} as references for the following result.
				
		\begin{prop}\label{PropSpinDefined}
			Let $\pp\in\irr{\AAA}$. $\Spinof{\pp}$ is well defined if and only if $\Pof{\pp}$ contains only odd values.
		\end{prop}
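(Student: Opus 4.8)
The plan is to treat this proposition as a translation between the purely combinatorial list $\Pof{\pp}$ and the classical geometric condition under which the spin parity (Arf invariant) is defined. By Remark \ref{RMK_PofP}, the elements of $\Pof{\pp}$ are exactly the numbers $\ell_1+1,\dots,\ell_n+1$, where $\ell_1,\dots,\ell_n$ are the degrees of the singularities of any Translation Surface associated to $\pp$; equivalently, each $\ell_i$ is the order of the corresponding zero of the abelian differential, so the surfaces arising from $\pp$ all lie in one stratum $\mathcal{H}(\ell_1,\dots,\ell_n)$ determined by $\Pof{\pp}$. Thus the whole statement reduces to recording when the spin parity exists on such a stratum and matching the parity condition to the arithmetic of $\Pof{\pp}$.

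First I would recall the defining property of the spin parity, as set out in the cited references \cite{cKonZor2003} and \cite{cZor2008}: for an abelian differential $\omega$ with zeros of orders $\ell_1,\dots,\ell_n$, the divisor $\tfrac12\operatorname{div}(\omega)$ is an integral divisor --- hence determines a theta characteristic whose Arf invariant is $\Spinof{\pp}$ --- precisely when every order $\ell_i$ is even. When some $\ell_i$ is odd, no such square root of the canonical divisor exists and the spin parity is not defined. Since all surfaces coming from $\pp$ share the same multiset $\{\ell_1,\dots,\ell_n\}$, the spin parity is either available for all of them or for none; this is what ``well defined'' means here, so it is genuinely a property of $\pp$ alone.

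It then remains only to push the parity condition through Remark \ref{RMK_PofP}. Each value in $\Pof{\pp}$ has the form $\ell_i+1$, and $\ell_i+1$ is odd if and only if $\ell_i$ is even. Hence every value in $\Pof{\pp}$ is odd exactly when every singularity degree $\ell_i$ is even, which by the previous paragraph is exactly the condition for $\Spinof{\pp}$ to be defined. This yields the claimed equivalence.

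I do not anticipate a genuine computational obstacle: the depth of the proposition is entirely imported from the cited definition of the spin parity, while the combinatorial side is the elementary observation, encoded in $\Nof{\pp}$, that each cycle length in $\Yof{\pp}$ exceeds the corresponding singularity degree by one. The only point requiring care is the precise reading of ``well defined'': it should be understood as the \emph{existence} of the Arf invariant on the stratum $\mathcal{H}(\ell_1,\dots,\ell_n)$, not as independence from the choice of representative $\pp$, the latter being automatic since $\Pof{\pp}$ is constant on an Extended Rauzy Class.
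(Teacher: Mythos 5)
Your proposal is correct and matches the paper's treatment: the paper gives no proof of this proposition, deferring entirely to \cite{cKonZor2003} and \cite{cZor2008}, and your argument is exactly the standard one from those references (the spin structure exists iff $\tfrac12\operatorname{div}(\omega)$ is integral, i.e.\ all zero orders $\ell_i$ are even) combined with the shift $\Pof{\pp}=\{\ell_i+1\}$ from Remark \ref{RMK_PofP}.
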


		\subsection{Reduction of Theorem \ref{ThmMain}}

		We now may present a more direct theorem given the definitions from this section. As the proof indicates, this result follows from Propositions \ref{PropNamingsContainEven} and \ref{PropFullGroup} that follow. We then show that this result implies Theorem \ref{ThmMain}.

		\begin{thm}\label{ThmMain2}
			If $\pp\in\irr{\AAA}$, then
				$$ \Namings{\pp} = \RHScase{\Eperm{\AAA}, & \Pof{\pp}\mbox{ is simple}\\
										\perm{\AAA}, & \mbox{otherwise.}}$$
		\end{thm}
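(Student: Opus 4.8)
The plan is to use that $\Namings{\pp}$ is a subgroup of $\perm{\AAA}$ (Lemma \ref{LemNamingsClass}) and that $\Eperm{\AAA}$ has index $2$, so the theorem reduces to two independent facts: that $\Eperm{\AAA}\subseteq\Namings{\pp}$ for every $\pp$, and that $\Namings{\pp}$ contains an odd permutation exactly when $\Pof{\pp}$ is not simple. Given the first fact, $\Namings{\pp}$ is trapped between $\Eperm{\AAA}$ and $\perm{\AAA}$ and hence equals one of the two, while the second selects the correct one. The basic tool throughout is the equivariance $\Sof{\pp\circ\nu}=\nu^{-1}\Sof{\pp}\nu$, a direct check from \eqref{eq_SofPi}, which yields the analogous conjugation formula for $\Nof{\pp}=(\Xof{\pp},\Yof{\pp})$ and shows $\Pof{\pp\circ\nu}=\Pof{\pp}$ for all $\nu$; in particular simplicity of $\Pof{\pp}$ is itself renaming-invariant, consistent with $\Namings{}$ being constant on $\ExtClassLab{\pp}$. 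I would also reduce to $\pp$ standard, which is harmless because $\pp\circ\nu$ is standard precisely when $\pp$ is (standardness depends only on $p_1\circ p_0^{-1}$); thus for $\nu\in\Namings{\pp}$ the pair $\pp\circ\nu$ is a standard pair of $\ExtClassLab{\pp}$ and so is joined to $\pp$ by a switch path.

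For the inclusion $\Eperm{\AAA}\subseteq\Namings{\pp}$ I would argue constructively. Since $\Eperm{\AAA}$ is generated by $3$-cycles, it suffices to realize enough $3$-cycles as renamings. Starting from a convenient standard representative, I would read off from short sequences of inner and outer switch moves that return to a pair with the same underlying permutation $p_1\circ p_0^{-1}$ the associated renaming $\nu$ with $\qq=\pp\circ\nu$; each such $\nu$ lies in $\Namings{\pp}$. The aim is to produce a family of $3$-cycles generating $\Eperm{\AAA}$ (for instance $3$-cycles supported inside a single cycle of $\Yof{\pp}$, obtained from inner switches that move letters within one singularity, together with $3$-cycles mixing two cycles, obtained from outer switches that move letters between singularities). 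The bookkeeping is to check that these returning switch paths do fix the underlying permutation and to compute the resulting permutation of letters.

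The implication ``$\Pof{\pp}$ not simple $\Rightarrow$ $\perm{\AAA}\subseteq\Namings{\pp}$'' is where a repeated cycle length is used. If two cycles of $\Yof{\pp}$ share a length $m$, the corresponding singularities have equal degree and may be interchanged by left induction; via outer switches I would realize a renaming $s$ swapping these two $m$-cycles letter-for-letter, of sign $(-1)^{m}$. When $m$ is odd, $s$ is already odd and we are done. When $m$ is even I would instead realize two interchanges with different rotational alignments, whose composite is a nontrivial rotation of one $m$-cycle, of sign $(-1)^{m-1}$ and hence odd. In either case $\Namings{\pp}$ acquires an odd element, so with the previous paragraph $\Namings{\pp}=\perm{\AAA}$. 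The delicate point is to exhibit, for even $m$, two genuinely distinct alignments of the interchange.

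The reverse implication, ``$\Pof{\pp}$ simple $\Rightarrow$ $\Namings{\pp}\subseteq\Eperm{\AAA}$'', is the main obstacle. The cleanest route is to build a single $\mathbb{Z}/2$-valued function $\Phi$ on the standard pairs of $\ExtClassLab{\pp}$ that is invariant under every switch move and satisfies $\Phi(\pp\circ\nu)=\mathrm{sgn}(\nu)\,\Phi(\pp)$; then for standard $\nu\in\Namings{\pp}$, switch-invariance gives $\Phi(\pp\circ\nu)=\Phi(\pp)$ and forces $\mathrm{sgn}(\nu)=1$. A natural candidate is a reference-twisted sign such as $\mathrm{sgn}(p_0\circ\beta^{-1})$ for a fixed bijection $\beta$, corrected by an intrinsic term read off from the cycles of $\Yof{\pp}$ so as to cancel the variation under a single Rauzy move; the twisting by $\mathrm{sgn}(\nu)$ is then automatic from $p_0\mapsto p_0\circ\nu$. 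The reason simplicity is exactly the right hypothesis is that pinning down the intrinsic correction requires canonically distinguishing the cycles of $\Yof{\pp}$, which is possible when all lengths are distinct but fails precisely by an interchange of equal cycles (an odd permutation when their common length is odd, or by the even-case rotations above) as soon as a length repeats. I expect the heart of the work to be verifying switch-invariance of $\Phi$ by inspecting each inner and outer switch, and in particular confirming that the even-length rotation ambiguity is resolved by the finer data of the pair—the interleaving of its two rows—rather than by the cycle type of $\Yof{\pp}$ alone.
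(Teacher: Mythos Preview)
Your top-level reduction matches the paper exactly: use Lemma~\ref{LemNamingsClass} to see that $\Namings{\pp}$ is a subgroup sandwiched between $\Eperm{\AAA}$ and $\perm{\AAA}$, then prove $\Eperm{\AAA}\subseteq\Namings{\pp}$ always (Proposition~\ref{PropNamingsContainEven}) and that an odd element appears precisely when $\Pof{\pp}$ is not simple (Proposition~\ref{PropFullGroup}). The inclusion $\Eperm{\AAA}\subseteq\Namings{\pp}$ is handled in the paper by reducing to block forms and exhibiting explicit long cycles; your sketch via $3$-cycles is in the same spirit if less precise.

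The genuine gap is in the direction ``$\Pof{\pp}$ simple $\Rightarrow$ $\Namings{\pp}\subseteq\Eperm{\AAA}$''. Your plan is to build a $\mathbb{Z}/2$-valued invariant $\Phi$, with candidate $\mathrm{sgn}(p_0\circ\beta^{-1})$ times a correction read off from the cycles of $\Yof{\pp}$; you yourself note that checking switch-invariance of $\Phi$ is ``the heart of the work'', and you do not carry it out. The paper does \emph{not} construct such a $\Phi$ on $\AAA$ and its method is quite different. When every cycle of $\Yof{\pp}$ has odd length, the paper uses the factorization of Lemma~\ref{LemQuotientOverZof}: any $\nu\in\Namings{\pp}$ equals $\mu_\omega\cdot\eta$ with $\mu_\omega$ a product of $3$-cycles (hence always even, by Definition~\ref{DefMuOmega}) and $\eta\in\Zof{\pp}$. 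Simplicity together with oddness of the cycle lengths forces $\Zof{\pp}$ to be generated by odd-length cycles, hence $\Zof{\pp}\subseteq\Eperm{\AAA}$, and Corollary~\ref{CorEvenZof} finishes. When $\Pof{\pp}$ has even entries this fails outright, since $\Zof{\pp}$ then contains odd permutations; here the paper enlarges the alphabet via carefully chosen prefix insertions (Lemma~\ref{LemCombinedInserts}) so that the resulting $\hat{\pp}$ on $\AAA\cup\XXX$ has a single odd cycle and $\Spinof{\hat{\pp}}$ is defined. The $\mathbb{Z}/2$ obstruction you are looking for is precisely this spin parity, but it lives on the enlarged alphabet and must be compared across \emph{different} insertion rules related by the switch path; the parity bookkeeping is the content of Lemma~\ref{LemCombinedInserts} and the Type~\ref{TypeOdd} case of Proposition~\ref{PropFullGroup}. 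Nothing in your proposal supplies this mechanism, and the need for the insertion trick strongly suggests that no correction term intrinsic to $\Yof{\pp}$ on $\AAA$ will make your $\Phi$ switch-invariant in the even-cycle case.

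A smaller issue: in your ``not simple'' argument, the even-$m$ case (compose two interchanges at different alignments to get an odd rotation) is speculative, and you flag it as delicate. The paper avoids this by passing to a representative containing a single Form~\ref{FormThreeAndTwos} block with equal parameters and exhibiting one explicit odd cycle of length $4m+4$ as a renaming (Lemma~\ref{LemEvenCycles}); no composition of two swaps is needed.
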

		
		\begin{proof}
			By Proposition \ref{PropNamingsContainEven}, $\Eperm{\AAA}\subseteq\Namings{\pp}$. By Proposition \ref{PropFullGroup}, if $\Pof{\pp}$ is simple $\Namings{\pp} = \Eperm{\AAA}$ and if $\Pof{\pp}$ is not simple, $\Namings{\pp}\subsetneq \Eperm{\AAA}$. Becuse $\Namings{\pp}$ is a group by Lemma \ref{LemNamingsClass}, $\Namings{\pp} = \Eperm{\AAA}$ in this case.
		\end{proof}
		
		\begin{proof}[Proof of Theorem \ref{ThmMain}]
			Consider ``labeled'' permutation $\pp$ and ``non-labeled'' $\pi = p_1\circ p_0^{-1}$. If $\pi'$ is the ``non-labeled'' Extended Rauzy Class of $\pi$, $\ExtClassNon{\pi}$, then there exists $\pp'\in\ExtClassLab{\pp}$ so that $\pi'=p'_1\circ [p'_0]^{-1}$. So for every $\pi'$ with associated to $\pp'$, there are exactly $\#\Namings{\pp'}$ distinct $\qq\in\ExtClassLab{\pp}$ so that $q_1\circ q_0^{-1} = \pi'$. And so
				$$ \# \ExtClassLab{\pp} = \#\left[\bigcup_{\pi'\in\ExtClassNon{\pi}}\left\{\qq: q_1\circ q_0^{-1} = \pi'\right\}\right]$$
				$$ = \sum_{\pi'\in\ExtClassNon{\pi}} \#\Namings{\pp'} = (\#\Namings{\pp})\cdot (\#\ExtClassNon{\pi})$$
			because $\Namings{\pp'} = \Namings{\pp}$ for each $\pp'\in\ExtClassLab{\pp}$ by Lemma \ref{LemNamingsClass}.
			We see that
				$$ \frac{\#\ExtClassLab{\pp}}{\#\ExtClassNon{\pi}} = \#\Namings{\pp}.$$
			If $\ell_1,\dots,\ell_n$ are the singularities associated to $\pp$, then
				$$ \Pof{\pp} = \{\ell_1+1,\dots,\ell_n+1\}$$
			and all $\ell_j$ are distinct if and only if $\Pof{\pp}$ is simple. Therefore
				$$ \Namings{\pp} = \RHScase{\Eperm{\AAA}, & \mbox{if } \ell_i=\ell_j\mbox{ for some }i\neq j,\\
										\perm{\AAA}, & \mbox{all }\ell_j\mbox{ are distinct,}}$$
			by Theorem \ref{ThmMain2}. We conclude the proof, as $\#\Eperm{\AAA} = N!/2$ and $\#\perm{\AAA} = N!$.
		\end{proof}

	\section{Invariant Properties}

		We provide in this section results that show the invariance of many of the objects defined in Section \ref{SEC_DEF} within Rauzy and Extended Rauzy Classes. We provide references for proofs not included here.

		\begin{prop}
		    If $\qq\in\ClassLab{\pp}$ for $\pp\in \irr{\AAA}$, then $\Sof{\pp} = \Sof{\qq}$ and $\Nof{\pp} = \Nof{\qq}$.
		\end{prop}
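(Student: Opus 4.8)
Conceptually, $\Sof{\pp}$ records the vertex/singularity structure of the surface attached to $\pp$ (Remark \ref{RMK_SofP}), which right Rauzy induction leaves unchanged; my plan is to give a self-contained combinatorial verification of this. The plan is to reduce everything to a single induction step and then argue by a short, local analysis of the two-row diagram, deducing the statement for $\Nof{\cdot}$ formally. Since $\ClassLab{\pp}$ is by definition the orbit of $\pp$ under type $0$ and type $1$ right induction, it suffices to treat the case where $\qq$ is obtained from $\pp$ by one such move. I would first record that $\Xof{\pp}=p_0^{-1}(1)$ is preserved: type $0$ leaves $p_0$ unchanged, while type $1$ moves only letters occupying positions larger than $p_0(z_1)\ge 1$ in row $0$, so in either case the letter in position $1$ of row $0$ is fixed. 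Because $\Yof{\pp}$ is defined purely in terms of $\Sof{\pp}$ and $\Xof{\pp}$, the equality $\Nof{\pp}=\Nof{\qq}$ follows at once from $\Sof{\pp}=\Sof{\qq}$ together with $\Xof{\pp}=\Xof{\qq}$.

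For the core claim I would read \eqref{eq_SofPi} as follows: in the generic (third) branch, $\Sof{\pp}(a)$ is the letter immediately to the right, in row $0$, of the letter immediately to the left, in row $1$, of $a$, and the first two branches encode the two wrap conventions at the right end of row $0$ and the left end of row $1$. Consider type $0$ induction, which fixes row $0$ and, reading off the definition, removes $z_1=p_1^{-1}(N)$ from the end of row $1$ and reinserts it immediately to the right of $z_0=p_0^{-1}(N)$ (here $z_0\ne z_1$ by irreducibility). If $z_0$ already sits immediately to the left of $z_1$ the move is trivial and $\qq=\pp$; otherwise exactly two left-neighbor relations in row $1$ change, namely that of $z_1$ (whose new left neighbor is $z_0$) and that of the old right neighbor of $z_0$ (whose new left neighbor is $z_1$). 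Since row $0$ is untouched, $\Sof{\cdot}$ can change only at these two letters.

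The heart of the verification is then to check these two letters against the branches of \eqref{eq_SofPi}. For the old right neighbor of $z_0$, the second branch of the formula for $\pp$ gives $p_0^{-1}(p_0(z_1)+1)$, while for $\qq$ the generic branch applies with left neighbor $z_1$ and reproduces the same value. For $z_1$, the generic branch for $\pp$ gives $p_0^{-1}\!\left(p_0\!\left(p_1^{-1}(N-1)\right)+1\right)$, while for $\qq$ the second branch applies (its left neighbor is now $z_0=q_0^{-1}(N)$) and evaluates to the right neighbor in row $0$ of the new last letter of row $1$, which is exactly $p_1^{-1}(N-1)$; the two values coincide. Hence $\Sof{\pp}=\Sof{\qq}$ under any type $0$ move.

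For type $1$ moves I would invoke the row-swap symmetry $\pp=(p_0,p_1)\mapsto(p_1,p_0)$, under which the two types of right induction are interchanged and $\Sof{\cdot}$ is sent to its inverse; the type $0$ computation above then yields the type $1$ case, and alternatively one repeats the same two-letter bookkeeping with the roles of the rows (and of ``left''/``right'') exchanged. Combining the two types gives $\Sof{\pp}=\Sof{\qq}$ for one induction step, and the reduction of the first paragraph completes the proof. I expect the only delicate point to be the boundary bookkeeping: the moved letters $z_0,z_1$ live at the right ends of the two rows, precisely where the non-generic branches of \eqref{eq_SofPi} are active, so one must check that it is the special branches for $\qq$ (rather than the generic ones) that match $\pp$. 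Once the neighbor description is fixed, this reduces to the two explicit evaluations above.
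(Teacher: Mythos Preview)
Your argument is correct. The reduction to a single right-induction step, the two-letter left-neighbor analysis for type $0$, and the row-swap symmetry for type $1$ all check out; in particular your verification that the branch selection in \eqref{eq_SofPi} switches precisely at $w$ (old right neighbor of $z_0$) and at $z_1$, with the values matching, is accurate, and the claim $\Sof{(p_1,p_0)}=\Sof{(p_0,p_1)}^{-1}$ used for the symmetry is true (and your fallback of redoing the bookkeeping directly is sound in any case). You also correctly observe that $\Nof{\pp}$ depends on both $\Sof{\pp}$ and $\Xof{\pp}$, and separately check that $\Xof{\pp}=p_0^{-1}(1)$ is fixed by right induction.

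The paper, however, takes a different and much shorter route: it does not reprove the invariance of $\Sof{\pp}$ at all, but simply cites \cite{cFick2012} for $\Sof{\pp}=\Sof{\qq}$ and then notes that $\Nof{\pp}$ is determined by $\Sof{\pp}$ (with the implicit observation that $\Xof{\pp}$ is also preserved). Your approach buys self-containment and an explicit, elementary verification independent of outside references, at the cost of a longer argument; the paper's approach keeps the exposition brief by outsourcing the computation. Both are valid; yours would be preferable in a context where the reader may not have access to, or familiarity with, the cited source.
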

		
		\begin{proof}
			The proof that $\Sof{\pp}=\Sof{\qq}$ is from \cite{cFick2012}. Because $\Nof{\pp}$ is defined by $\Sof{\pp}$, this result follows as well.
		\end{proof}

		\begin{defn}
		      Let $\pp\in\irr{\AAA}$ and $\nu\in\perm{\AAA}$. The \emph{right action of $\nu$ on $\Nof{\pp}$} is given by
			  $$ \Nof{\pp}\ast\nu = (\Xof{\pp}\ast\nu,\Yof{\pp}\ast \nu)$$
		      where $\Xof{\pp}\ast \nu = \nu^{-1}(\Xof{\pp})$ and $\Yof{\pp}\ast \nu = \nu^{-1}\Yof{\pp}\nu$.
		      The \emph{centralizer} of $\Nof{\pp}$ is
			    $$ \Zof{\pp} := \{\nu\in\perm{\AAA}: \Nof{\pp}\ast \nu = \Nof{\pp}\}.$$
		\end{defn}
		
		\begin{rmk}
			The definition given for the right action of $\perm{\AAA}$ on $\Nof{\pp}$ was chosen so that if $\nu,\eta\in\perm{\AAA}$ and $\pp\in\irr{\AAA}$, then
				\begin{equation}
					\left(\Nof{\pp}*\nu\right)*\eta = \Nof{\pp}*(\nu\eta).
				\end{equation}
			Therefore $\Nof{\pp}*\nu_1*\nu_2*\cdots*\nu_k = \Nof{\pp}*\nu_1\nu_2\cdots\nu_k$ is well defined, $\nu_1,\dots,\nu_k\in\perm{\AAA}$.
		\end{rmk}

		\begin{defn}\label{DefMuOmega}
		    Let $\pp\in \irr{\AAA}$ be standard and $\qq$ the result of switch move $\omega$ on $\pp$. We define the permutation $\mu_{\omega}$ by
			$$ \mu_{\omega} = (a,z,b)$$
		    if $\omega$ is an $\{a,b\}$-switch where $a = p_0^{-1}(1)$ and $z = p_1^{-1}(1)$, and $\mu_{\omega}$ is trivial otherwise.
		    If $\omega = \omega_1\dots \omega_k$ is a switch path from standard $\pp$ to $\qq$, then we define $\mu_\omega$ by
			$$ \mu_\omega = \mu_{\omega_1}\mu_{\omega_2}\cdots\mu_{\omega_k}.$$
		\end{defn}

		\begin{lem}\label{LemQuotientOverZof}
		    If $\omega$ is a switch path leading from standard $\pp$ to $\qq$, then
			    $$ \Nof{\qq} = \Nof{\pp}\ast \mu_\omega.$$
		    Furthermore, if $\pp\simperm\qq$ and $\qq = \pp\circ \nu$ for $\nu\in\perm{\AAA}$, then
			    $$ \nu = \mu_\omega \cdot\eta$$
		    where $\eta \in \Zof{\qq}$.
		\end{lem}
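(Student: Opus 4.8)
The plan is to prove the first identity $\Nof{\qq} = \Nof{\pp}\ast\mu_\omega$ by induction on the length of the switch path $\omega$, and then to deduce the factorization of $\nu$ from it together with an equivariance property of $\Nof{\cdot}$ under renaming. Since $\mu_\omega = \mu_{\omega_1}\cdots\mu_{\omega_k}$ by Definition \ref{DefMuOmega} and the right action satisfies $(\Nof{\pp}\ast\mu)\ast\nu = \Nof{\pp}\ast(\mu\nu)$, the single-move identity propagates along the whole path: writing $\pp_i$ for the pair after $i$ switches, the relations $\Nof{\pp_{i+1}} = \Nof{\pp_i}\ast\mu_{\omega_{i+1}}$ telescope to $\Nof{\qq} = \Nof{\pp}\ast\mu_\omega$. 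So it suffices to treat a single switch move $\omega$ out of a standard pair.

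For an inner switch, $\mu_\omega$ is trivial and $\qq\in\ClassLab{\pp}$, so $\Nof{\qq} = \Nof{\pp} = \Nof{\pp}\ast\mu_\omega$ by the invariance of $\Nof{\cdot}$ within a Rauzy Class established at the start of this section. The remaining cases are the two outer switches, and these form the crux of the argument. Here I would compute $\Sof{\cdot}$, and hence $\Nof{\cdot}$, directly from Equation \eqref{eq_SofPi} on the explicit standard forms displayed in the switch definitions, comparing $\pp$ with $\pp''$ for an $\{a,d\}$-switch and with $\pp'''$ for a $\{d,z\}$-switch. For the $\{a,d\}$-switch the marked letter moves from $a = \Xof{\pp}$ to $d = \Xof{\pp''}$, which matches $\Xof{\pp}\ast\mu_\omega = \mu_\omega^{-1}(a) = d$ for $\mu_\omega = (a,z,d)$; one then checks that the rearrangement of the blocks $\overline{u}$ and $\overline{v}$ about the marker conjugates $\Yof{\pp}$ by $\mu_\omega$. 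For the $\{d,z\}$-switch the marker stays at $a$, $\mu_\omega$ is trivial, and the same bookkeeping shows $\Nof{\cdot}$ is unchanged even though the Rauzy Class is not. This verification, tracking how the three branches of \eqref{eq_SofPi} reassign values when the standard form is reshuffled, is the one genuinely computational step, and I expect it to be the main obstacle.

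The second ingredient is the equivariance identity
\begin{equation}\label{eq_equivariance}
\Nof{\pp\circ\nu} = \Nof{\pp}\ast\nu, \qquad \nu\in\perm{\AAA},
\end{equation}
which is immediate from the definitions: $(p_0\circ\nu)^{-1}(1) = \nu^{-1}(\Xof{\pp})$ gives the $\Xof{\cdot}$ component, while a line-by-line comparison of \eqref{eq_SofPi} for $\pp$ and $\pp\circ\nu$ gives $\Sof{\pp\circ\nu} = \nu^{-1}\Sof{\pp}\nu$, whence $\Yof{\pp\circ\nu} = \nu^{-1}\Yof{\pp}\nu$ because the passage from $\Sof{\cdot}$ to $\Yof{\cdot}$ commutes with conjugation.

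Finally, assume $\pp\simperm\qq$ with $\qq = \pp\circ\nu$. Set $\eta = \mu_\omega^{-1}\nu$, so that $\nu = \mu_\omega\eta$ as required; it remains to check $\eta\in\Zof{\qq}$. Rewriting the first identity as $\Nof{\qq}\ast\mu_\omega^{-1} = \Nof{\pp}$ and using \eqref{eq_equivariance},
\begin{equation*}
\Nof{\qq}\ast\eta = \bigl(\Nof{\qq}\ast\mu_\omega^{-1}\bigr)\ast\nu = \Nof{\pp}\ast\nu = \Nof{\pp\circ\nu} = \Nof{\qq},
\end{equation*}
so $\eta$ fixes $\Nof{\qq}$ under the right action, i.e.\ $\eta\in\Zof{\qq}$. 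This completes the deduction of the factorization from the first identity and the equivariance property.
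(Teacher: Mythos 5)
Your proposal is correct and follows essentially the same route as the paper: induction on the length of the switch path with the single-move case verified directly (the paper likewise defers that verification), followed by the equivariance identity $\Nof{\pp\circ\nu}=\Nof{\pp}\ast\nu$ and the computation $\Nof{\qq}\ast\mu_\omega^{-1}\nu=\Nof{\qq}$ to place $\eta=\mu_\omega^{-1}\nu$ in $\Zof{\qq}$. Your explicit statement of the equivariance identity and the check of the $\Xof{\cdot}$ component for the outer switches merely make precise what the paper uses implicitly.
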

		
		\begin{proof}
			This may first be verified for the case when $\qq$ is related to $\pp$ by exactly one switch move. We then proceed by induction. If $\qq$ is the result of $n+1$ switches on $\pp$, let $\omega'$ be the first $n$ switches with result $\qq'$ and $\omega_{n+1}$ be the final switch from $\qq'$ to $\qq$. Then
					$$ \Nof{\qq} = \Nof{\qq'} * \mu_{\omega_{n+1}} = \big(\Nof{\pp}*\mu_{\omega'}\big)*\mu_{\omega_{n+1}} = \Nof{\pp}*\mu_{\omega},$$
			as desired.
			
			If $\qq = \pp\circ \nu\in \ExtClassLab{\pp}$, then $\qq$ is the result of a switch path $\omega$ from $\pp$. We see then that
				\begin{equation}
					\Nof{\qq} = \Nof{\pp}* \nu = \Nof{\pp} * \mu_\omega \Rightarrow \Nof{\qq}* \mu^{-1}_{\omega}\nu = \Nof{\qq},
				\end{equation}
			and so $\mu_\omega^{-1}\cdot\nu = \eta\in \Zof{\qq}$ or $\nu = \mu_\omega\cdot\eta$.
		\end{proof}

		This corollary already follows from \cite{cKonZor2003}, but we mention it here as its proof is short given the above lemma.
		
		\begin{cor}\label{CorPofInvariant}
			If standard $\qq\in \ExtClassLab{\pp}$ for standard $\pp\in\irr{\AAA}$, then $\Pof{\qq} = \Pof{\pp}$.
		\end{cor}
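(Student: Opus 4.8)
The plan is to reduce the statement to the elementary fact that conjugation preserves cycle type, using Lemma \ref{LemQuotientOverZof} as the bridge. Since $\pp$ and $\qq$ are both standard and $\qq\in\ExtClassLab{\pp}$, the switch-path proposition guarantees a switch path $\omega$ from $\pp$ to $\qq$. Lemma \ref{LemQuotientOverZof} then yields $\Nof{\qq} = \Nof{\pp}\ast\mu_\omega$. Unwinding the definition of the right action, this single identity splits into
$$ \Xof{\qq} = \mu_\omega^{-1}(\Xof{\pp}) \qquad\text{and}\qquad \Yof{\qq} = \mu_\omega^{-1}\,\Yof{\pp}\,\mu_\omega. $$
So $\Yof{\qq}$ is simply a conjugate of $\Yof{\pp}$, and $\Xof{\qq}$ is the image of $\Xof{\pp}$ under the very permutation $\mu_\omega^{-1}$ effecting that conjugation.

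From here I would argue as follows. Conjugation is a cycle-type–preserving bijection on $\perm{\AAA}$: if $\Yof{\pp}$ has a cycle $(x_1,\dots,x_k)$, then $\mu_\omega^{-1}\Yof{\pp}\mu_\omega$ has the cycle $(\mu_\omega^{-1}(x_1),\dots,\mu_\omega^{-1}(x_k))$ of the same length. Hence the full multiset of cycle lengths of $\Yof{\qq}$ equals that of $\Yof{\pp}$. The only point requiring care is the excluded trivial cycle. By construction $\Xof{\pp}$ is a fixed point of $\Yof{\pp}$ (the cycle $(\Xof{\pp})$), so its image $\mu_\omega^{-1}(\Xof{\pp})$ is a fixed point of $\Yof{\qq}$; but that image is exactly $\Xof{\qq}$ by the first identity above. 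Thus the distinguished trivial cycle $(\Xof{\pp})$ discarded when forming $\Pof{\pp}$ is carried by the conjugation precisely onto the trivial cycle $(\Xof{\qq})$ discarded when forming $\Pof{\qq}$. The two exclusions therefore correspond, and the remaining cycle lengths match with multiplicity, giving $\Pof{\qq}=\Pof{\pp}$.

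I do not expect a genuine obstacle here; the content is entirely front-loaded into Lemma \ref{LemQuotientOverZof}, after which the corollary is a one-line consequence of conjugation invariance of cycle type. The only subtlety worth stating explicitly is the bookkeeping for the marked letter, namely verifying that the fixed point excluded on the $\pp$ side maps under $\mu_\omega^{-1}$ to the fixed point excluded on the $\qq$ side, so that one is comparing the two \emph{reduced} lists $\Pof{\pp}$ and $\Pof{\qq}$ rather than the raw permutations. This is immediate from comparing the two components of $\Nof{\qq}=\Nof{\pp}\ast\mu_\omega$, which is why no further argument is needed.
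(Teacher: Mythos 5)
Your argument is correct and is essentially the paper's own proof: invoke Lemma \ref{LemQuotientOverZof} to see that $\Yof{\qq}$ is conjugate to $\Yof{\pp}$, then use invariance of cycle type under conjugation. Your explicit check that the excluded fixed point $\Xof{\pp}$ is carried to $\Xof{\qq}$ is a detail the paper leaves implicit, but it is the same route.
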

		
		\begin{proof}
			$\Yof{\qq}$ is conjugate to $\Yof{\pp}$ by Lemma \ref{LemQuotientOverZof}. The number and orders of cycles of a permutation is invariant under conjugation.
		\end{proof}
		
		\begin{cor}\label{CorEvenZof}
			If $\pp\in\irr{\AAA}$ and $\Zof{\pp}\subseteq\Eperm{\AAA}$, then $\Namings{\pp}\subseteq\Eperm{\AAA}$.	
		\end{cor}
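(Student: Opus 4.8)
The plan is to reduce to a standard pair and then read off the parity of $\nu$ directly from the decomposition $\nu=\mu_\omega\cdot\eta$ supplied by Lemma \ref{LemQuotientOverZof}. First I would choose a standard pair $\pp_0\in\ClassLab{\pp}$, which exists because every Rauzy Class contains one. Since $\Nof{\pp}$ is invariant within a Rauzy Class, $\Nof{\pp_0}=\Nof{\pp}$, whence $\Zof{\pp_0}=\Zof{\pp}\subseteq\Eperm{\AAA}$; moreover $\Namings{\pp_0}=\Namings{\pp}$ by Lemma \ref{LemNamingsClass}. Thus it suffices to treat the case in which $\pp$ is already standard.

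Now fix $\nu\in\Namings{\pp}$ and set $\qq=\pp\circ\nu$, so $\qq\in\ExtClassLab{\pp}$. The point to record is that $\qq$ is again standard: standardness is a condition on $p_1\circ p_0^{-1}$ alone, and $q_1\circ q_0^{-1}=p_1\circ p_0^{-1}$ because the renaming $\nu$ cancels. If $\qq=\pp$ then $\nu$ is the identity and we are done, so suppose $\pp\neq\qq$. These are distinct standard pairs in the same Extended Rauzy Class, so they are joined by a switch path $\omega$, and Lemma \ref{LemQuotientOverZof} yields $\nu=\mu_\omega\cdot\eta$ with $\eta\in\Zof{\qq}$.

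It then remains to check that both $\mu_\omega$ and $\eta$ are even. For $\eta$: Lemma \ref{LemQuotientOverZof} gives $\Nof{\qq}=\Nof{\pp}\ast\mu_\omega$, so $\Zof{\qq}=\mu_\omega^{-1}\Zof{\pp}\mu_\omega$ is a conjugate of $\Zof{\pp}$; since conjugation preserves parity and $\Zof{\pp}\subseteq\Eperm{\AAA}$, we obtain $\Zof{\qq}\subseteq\Eperm{\AAA}$, so $\eta$ is even. For $\mu_\omega$: writing $\omega=\omega_1\cdots\omega_k$, Definition \ref{DefMuOmega} makes each $\mu_{\omega_i}$ either trivial or a $3$-cycle $(a,z,b)$, both of which are even, so $\mu_\omega=\mu_{\omega_1}\cdots\mu_{\omega_k}\in\Eperm{\AAA}$. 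Hence $\nu=\mu_\omega\cdot\eta\in\Eperm{\AAA}$, giving $\Namings{\pp}\subseteq\Eperm{\AAA}$.

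The bookkeeping — invariance of $\Nof{\pp}$ along a Rauzy Class, standardness of $\qq$, and conjugacy of the two centralizers — is routine. The one step carrying real weight, which I expect to be the crux, is the observation that every switch contributes an \emph{even} permutation $\mu_{\omega_i}$ (a $3$-cycle or the identity); this is exactly what keeps the odd classes of $\perm{\AAA}$ out of $\Namings{\pp}$ and makes $\Eperm{\AAA}$ the correct ceiling.
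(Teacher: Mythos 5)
Your proof is correct and follows the same route as the paper: apply Lemma \ref{LemQuotientOverZof} to write $\nu=\mu_\omega\cdot\eta$ and observe that both factors are even. You simply make explicit the details the paper leaves implicit (reduction to a standard pair, standardness of $\pp\circ\nu$, and the fact that $\Zof{\qq}$ is a conjugate of $\Zof{\pp}$ so that $\eta$ is even), which is a welcome elaboration rather than a different argument.
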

		
		\begin{proof}
			By Lemma \ref{LemQuotientOverZof}, if $\nu\in\Namings{\pp}$ then $\nu = \mu_\omega\cdot \eta$ where $\mu_\omega$, as given in Definition \ref{DefMuOmega} for any switch path $\omega$ from $\pp$ to $\pp\circ\nu$, and $\eta\in\Zof{\pp\circ \nu}$ are both even.
		\end{proof}
		
		The following proposition tells us that, when it is well defined, $\Spinof{\pp}$ is an invariant of an Extended Rauzy Class. See \cite{cKonZor1997} and \cite{cDele2013} for references and proofs.

		\begin{prop}\label{prop_spin}
			If $\pp,\qq \in \irr{\AAA}$ belong to the same Extended Rauzy Class and $\Pof{\pp}=\Pof{\qq}$ contains only odd cycles (i.e. $\Spinof{\pp}$ and $\Spinof{\qq}$ are defined), then $\Spinof{\pp} = \Spinof{\qq}$.
		\end{prop}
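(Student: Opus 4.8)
The quantity $\Spinof{\pp}$ is the Arf invariant of a $\mathbb{Z}/2$-valued quadratic form $q$ refining the mod-$2$ intersection pairing on $H_1(X;\mathbb{Z}/2)$, where $X$ is any translation surface suspended from $\pp$; this form exists precisely when every zero of the suspended abelian differential has even order, which by Remark \ref{RMK_CYCLES} is exactly the hypothesis that every length in $\Pof{\pp}$ is odd (cf. Proposition \ref{PropSpinDefined}). The Arf invariant is a topological invariant of $(X,\omega)$ and is therefore unchanged by any deformation of the flat structure within its stratum. The plan is to show that a single move of Rauzy Induction---left or right, either type---does not alter the underlying suspended surface, only the transversal used to encode it, so that the two resulting quadratic forms are carried into one another by an isomorphism of $H_1$. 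Since $\pp$ and $\qq$ lie in the same Extended Rauzy Class they are joined by a finite chain of such moves, and invariance along each link gives $\Spinof{\pp}=\Spinof{\qq}$.

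Concretely I would use Veech's zippered-rectangle construction: to $\pp$ together with length and suspension data one attaches a translation surface $X$, and right Rauzy Induction is realized as the first-return map of the vertical flow to a subinterval $I'\subset I$. The ambient surface $X$ is literally unchanged---only its decomposition into return-time rectangles is refined. Hence there is a canonical identification of the surface before and after the move, inducing an isomorphism on $H_1(X;\mathbb{Z}/2)$ that preserves both the intersection form and the winding-number data defining $q$, and therefore preserves its Arf invariant. Left Induction is the mirror statement for the reverse flow. This reduces the proposition to the single-move case, in which the underlying surface is fixed, and the global statement then follows because Extended Rauzy Classes are connected under left and right Induction.

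The crux, and the step I expect to consume essentially all of the real effort, is verifying that the explicit combinatorial recipe attaching $q$ to $\pp$ genuinely computes the intrinsic Arf invariant, so that it transforms correctly under the index shift induced by a move. One reads the winding numbers of the core curves of the rectangles off the permutation $p_1\circ p_0^{-1}$, assembles $q$ on a combinatorial model of $H_1$, and must check that each elementary move permutes and relabels these basis curves by an $H_1$-isomorphism under which $q$ is preserved. Equivalently, in a purely combinatorial attack one writes $\Spinof{\pp}$ as an explicit $\mathbb{Z}/2$-valued function of $\pp$---a sum of contributions indexed by the cycles of $\Sof{\pp}$---and proves directly that each of the four elementary moves leaves this function invariant. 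The bookkeeping of how the quadratic form is affected by the reindexing, and in particular the matching of the winding data before and after the shift, is the entire difficulty; once single-move invariance is in hand, the proposition is immediate. For this reason I would, as the text does, ultimately defer to the treatments in \cite{cKonZor1997} and \cite{cDele2013}, where this invariance is carried out in full.
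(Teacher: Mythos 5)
The paper offers no proof of this proposition at all---it simply cites \cite{cKonZor1997} and \cite{cDele2013}---and your proposal, after sketching the standard geometric argument (Arf invariant of the quadratic form on $H_1(X;\mathbb{Z}/2)$, invariance under Rauzy induction because the suspended surface is unchanged and only the transversal moves), ends by deferring to exactly those same references. Your sketch is the correct standard argument and your ultimate approach coincides with the paper's.
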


		\subsection{Prefix Insertions}

		We define in this section the notion of prefix insertions, maps that send a pair $\pp$ in $\irr{\AAA}$ to a pair $\hat{\pp}$ on a larger alphabet. We consider the effect of switch moves on insertions. Specifically, inner switch moves commute with insertions while outer switch moves generally do not.
	
		\begin{defn}
			If $\pp=(p_0,p_1)\in\irr{\AAA}$, $b_0,b_1\in\AAA$ and $x\notin\AAA$, then $$\hat{\pp} = (\hat{p}_0,\hat{p}_1) = \Ext{x}{b}{c}{\pp}\in\irr{\AAA\cup\{x\}}$$ is a \emph{prefix insertion} on $\pp$ given by
				$$ \hat{p}_\eps(a) = \RHScase{p_\eps(a), & p_\eps(a)<p_\eps(b_\eps)\\ p_\eps(b_\eps), & a=x \\ p_\eps(a)+1, & p_\eps(a)\geq p_\eps(b_\eps).}$$
		\end{defn}
		
		\begin{lem}\label{LemPrefixes}
			If $\pp\in\irr{\AAA}$, $x_1,\dots,x_m$ are distinct letters disjoint from $\AAA$, $b_1,\dots,b_m$ are distinct letters in $\AAA$ and $c_1,\dots,c_m$ are distinct letters in $\AAA$, then the prefix insertions
				$$ \III_j = \Ext{x_j}{b_j}{c_j}{*}$$
			may be applied in any order to $\pp$. And so
				$$ \Ext{(x_1,\dots,x_m)}{(b_1,\dots,b_m)}{(c_1,\dots,c_m)}{\pp} = \III_1\circ \dots \circ \III_m(\pp)$$
			is well defined.
		\end{lem}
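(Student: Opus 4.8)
The plan is to reduce the statement to a single row and then to a purely order-theoretic fact, after which it follows from showing that any two of the insertions commute. First I would observe that in the defining formula for a prefix insertion the value $\hat p_\eps$ depends only on $p_\eps$ and on the anchor $b_\eps$ (which is $b_j$ when $\eps=0$ and $c_j$ when $\eps=1$). Hence the two bijections constituting a pair are inserted into independently, and it suffices to prove the claim one row at a time: in row $0$ the anchors are the mutually distinct letters $b_1,\dots,b_m$, and in row $1$ the mutually distinct letters $c_1,\dots,c_m$. This reduces matters to the single-row statement: given a bijection $q\colon\BBB\to\{1,\dots,n\}$, distinct new letters $y_1,\dots,y_m\notin\BBB$, and \emph{distinct} anchors $u_1,\dots,u_m\in\BBB$, the bijection obtained by successively inserting each $y_j$ immediately before $u_j$ is independent of the order of insertion.

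Next I would record the order-theoretic reading of a single insertion. Listing the letters of $\BBB$ in increasing order of $q$-value gives a word $W$, and the three-case formula produces exactly the word obtained from $W$ by splicing $y$ into the gap immediately to the left of the unique occurrence of $u$. In particular a single insertion (i) leaves the relative order of all old letters unchanged, and (ii) modifies the immediate-predecessor data only at the one gap in front of its anchor $u$. These two properties, which are all I need, follow directly from the formula.

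With this in hand I would establish order-independence by showing the $m$ insertions commute pairwise; since the group of permutations of $\{1,\dots,m\}$ is generated by adjacent transpositions, pairwise commutativity upgrades to independence of the full ordering. Fixing $j\neq k$, the key point is that $u_j\neq u_k$ (distinctness of anchors within the row) and $y_j\neq y_k$, so the splice performed by $\III_j$ lies in the gap before $u_j$ and that of $\III_k$ in the gap before $u_k$, two \emph{distinct} gaps of $W$. By (ii) neither insertion disturbs the gap used by the other, and by (i) both preserve the relative order of old letters on which they rely; consequently applying $\III_j$ then $\III_k$ and applying $\III_k$ then $\III_j$ yield the same word, namely $W$ with $y_j$ and $y_k$ spliced into their respective gaps. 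Restoring both rows, $\III_j$ and $\III_k$ commute as operators on pairs. (Membership of the common result in $\irr{\AAA\cup\{x_1,\dots,x_m\}}$ is automatic, since under any ordering the final step is a genuine prefix insertion whose anchor lies in $\AAA$.)

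The routine but error-prone part is verifying property (ii) directly from the position formula: one must track the $\pm1$ index shifts and check, via a short case split on whether $q(u_k)<q(u_j)$ or $q(u_k)>q(u_j)$, that inserting before $u_j$ does not move $y_k$ out of the slot immediately before $u_k$, and symmetrically. This is where I would be most careful. Handled at the level of words, as above, the interaction simply disappears, since distinct anchors occupy distinct gaps — which is the conceptual reason the insertions commute.
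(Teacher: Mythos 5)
Your argument is correct, and it is the natural one: the paper states Lemma~\ref{LemPrefixes} without any proof, so there is nothing to compare against, but your reduction to one row at a time (row $0$ seeing only the $b_j$'s, row $1$ only the $c_j$'s) followed by the observation that each insertion splices $x_j$ into the gap immediately to the left of its anchor, and that distinct anchors occupy distinct gaps, is exactly the content the paper is relying on; pairwise commutation then gives independence of the ordering via adjacent transpositions. One small caveat, which is a defect of the paper's definition rather than of your proof: your parenthetical that membership in $\irr{\AAA\cup\{x_1,\dots,x_m\}}$ is ``automatic'' inherits the paper's unproved assertion that a prefix insertion preserves irreducibility, which can fail when $p_0(b_0)=p_1(b_1)=1$ (the new letter then heads both rows and the pair becomes reducible); the paper silently excludes this by the hypothesis $p_\eps(b_\eps)>1$ in Lemma~\ref{LemSwitchAndInsert}, and the same restriction should be understood here.
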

		
		Note that the sets $\{b_1,\dots,b_m\}$ and $\{c_1,\dots,c_m\}$ can have common elements in the above lemma. The lemma only requires that $b_i\neq b_j$ and $c_i\neq c_j$ for $i\neq j$.
		
		\begin{lem}
			If $\pp\in\irr{\AAA}$ and $\hat{\pp} = \Ext{x}{b_0}{b_1}{\pp}$, then
				$$ \Sof{\hat{\pp}}(a) = \RHScase{b_0, & a = b_1,\\ \Sof{\pp}(b_1), & a=x,\\ x, & \Sof{\pp}(a) = b_0,\\ \Sof{\pp}(a), & \mathrm{otherwise,}}$$
			if $\Sof{\pp}(b_1)\neq b_0$, while $\Sof{\pp} = \Sof{\hat{\pp}}$ on $\AAA$ and $\Sof{\hat{\pp}}(x)=x$ if $\Sof{\pp}(b_1) = b_0$.
		\end{lem}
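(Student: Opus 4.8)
The plan is to reinterpret $\Sof{\pp}$ as the composition of two \emph{local neighbor operations} and then observe that a prefix insertion perturbs only two adjacencies. For a letter $a$ that is not leftmost in row $1$, call $p_1^{-1}(p_1(a)-1)$ its \emph{row-$1$ predecessor}; for a letter $b$ that is not rightmost in row $0$, call $p_0^{-1}(p_0(b)+1)$ its \emph{row-$0$ successor}. Reading the three-case definition \eqref{eq_SofPi}, the generic clause (Case~3) says exactly that $\Sof{\pp}(a)$ is the row-$0$ successor of the row-$1$ predecessor of $a$, while Case~1 ($p_1(a)=1$) and Case~2 encode the wrap-around conventions used when the predecessor is the leftmost letter of row $1$ or when the successor would run off the right end of row $0$. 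I would first record and verify these reformulations directly from the formula.

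Next I would read off the effect of the insertion. By definition of $\Ext{x}{b_0}{b_1}{\pp}$, the letter $x$ occupies the slot immediately left of $b_0$ in row $0$ (so $\hat p_0(x)=p_0(b_0)$ and $b_0$ shifts right by one) and immediately left of $b_1$ in row $1$, with every other relative order preserved. Hence the only adjacencies that change are: in row $0$ the block ``(predecessor of $b_0$) followed by $b_0$'' becomes ``(predecessor of $b_0$), then $x$, then $b_0$,'' and in row $1$ the block ``(predecessor of $b_1$) followed by $b_1$'' becomes ``(predecessor of $b_1$), then $x$, then $b_1$.'' Equivalently, the row-$1$ predecessor map and the row-$0$ successor map of $\hat{\pp}$ coincide with those of $\pp$ away from these two spots, where $x$ is spliced in.

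Assembling the cases is then bookkeeping against the reformulation. For $a=b_1$ the new row-$1$ predecessor is $x$, whose row-$0$ successor is $b_0$, giving $\Sof{\hat{\pp}}(b_1)=b_0$. For $a=x$ the row-$1$ predecessor is the old predecessor of $b_1$, and its row-$0$ successor is unchanged precisely because $\Sof{\pp}(b_1)\neq b_0$, yielding $\Sof{\hat{\pp}}(x)=\Sof{\pp}(b_1)$. For $a$ with $\Sof{\pp}(a)=b_0$, the row-$0$ successor of $a$'s row-$1$ predecessor was $b_0$ and is now $x$, so $\Sof{\hat{\pp}}(a)=x$; for all remaining $a$ nothing relevant changes and $\Sof{\hat{\pp}}(a)=\Sof{\pp}(a)$. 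The degenerate branch is the coincidence $\Sof{\pp}(b_1)=b_0$, which says the row-$1$ predecessor of $b_1$ equals the row-$0$ predecessor of $b_0$; then splicing $x$ between them forces the fixed point $\Sof{\hat{\pp}}(x)=x$ while injectivity of the predecessor map shows no $a\in\AAA$ satisfies $\Sof{\pp}(a)=b_0$ except $b_1$ itself, so $\Sof{\hat{\pp}}$ agrees with $\Sof{\pp}$ on $\AAA$.

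The main obstacle is the boundary behavior, not the generic step. When $b_0$ (resp.\ $b_1$) is the leftmost letter of its row, the inserted $x$ becomes the new leftmost letter of row $0$ (resp.\ row $1$), so Case~1 of \eqref{eq_SofPi} is triggered for $\hat{\pp}$, and one must confirm that the neighbor reformulation continues to produce the claimed values there; similarly one must check the wrap-around Case~2 when a relevant successor falls at the right end. I would verify each such configuration against the definition of $\Sof{\hat{\pp}}$ explicitly—for instance, when $b_1=p_1^{-1}(1)$ and $b_0\neq p_0^{-1}(1)$ one recomputes $\Sof{\hat{\pp}}(b_1)$ from Case~3 and $\Sof{\hat{\pp}}(x)$ from Case~1 to see they give $b_0$ and $p_0^{-1}(1)=\Sof{\pp}(b_1)$ respectively—so that the clean neighbor picture is validated even at the ends of the rows.
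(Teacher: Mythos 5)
The paper states this lemma without proof, treating it as a direct verification from the definitions of $\Sof{\pp}$ and $\Ext{x}{b_0}{b_1}{\pp}$; your argument is precisely that verification, correctly organized, so it matches the intended (omitted) proof. The predecessor/successor reformulation of \eqref{eq_SofPi}, the observation that the insertion splices $x$ into exactly two adjacencies, the resulting four cases, the degenerate branch via injectivity of $\Sof{\pp}$, and the flagged boundary checks (which do all go through, in particular because $x$ is never the rightmost letter of either row of $\hat{\pp}$) are all sound.
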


		If we consider a pair $\pp$ and pair $\hat{\pp}$ given by a fixed insertion rule, we want to know how the same switch move $\omega$ affects the relationship between the resulting pairs $\qq = \omega \pp$ and $\hat{\qq} = \omega\hat{\pp}$. The next lemma explicitly gives the insertion rule, which may be different than the original, such that $\hat{\qq}$ is the result of an insertion on $\qq$. In particular, we note that if $\omega$ is an inner switch move then the insertion rule is unchanged.
		
		\begin{lem}\label{LemSwitchAndInsert}
			Let $\pp \in \irr{\AAA}$ be standard, $x\notin\AAA$ and $\hat{\pp} = \Ext{x}{b_0}{b_1}{\pp}$, where $b_0,b_1\in\AAA$ with $p_\eps(b_\eps)>1$, $\eps\in\{0,1\}$. Let $a = p_0^{-1}(1) = p_1^{-1}(\#\AAA)$ and $z= p_0^{-1}(\#\AAA) = p_1^{-1}(1)$.
			\begin{enumerate}
				\item If $\qq$ and $\hat{\qq}$ are the results of an (inner) $\{c,d\}$-switch, $c,d\in\AAA\setminus\{a,z\}$, on $\pp$ and $\hat{\pp}$ respectively, then $\hat{\qq} = \Ext{x}{b_0}{b_1}{\qq}$.
				\item If $\rr$ and $\hat{\rr}$ are the results of an (outer) $\{a,c\}$-switch, $c\in\AAA\setminus\{a,z\}$, on $\pp$ and $\hat{\pp}$ respectively, then 
					\begin{enumerate}
						\item $\hat{\rr} = \Ext{x}{z}{b_1}{\rr}$ if $c=b_0$,
						\item $\hat{\rr} = \Ext{x}{a}{b_1}{\rr}$ if $b_0= z$, and
						\item $\hat{\rr} = \Ext{x}{b_0}{b_1}{\rr}$ otherwise.
					\end{enumerate}
				\item If $\uu$ and $\hat{\uu}$ are the results of an (outer) $\{c,z\}$-switch, $c\in\AAA\setminus\{a,z\}$, on $\pp$ and $\hat{\pp}$ respectively, then
					\begin{enumerate}
						\item $\hat{\uu} = \Ext{x}{b_0}{a}{\uu}$ if $c=b_1$,
						\item $\hat{\uu} = \Ext{x}{b_0}{z}{\uu}$ if $b_1=a$, and
						\item $\hat{\uu} = \Ext{x}{b_0}{b_1}{\uu}$ otherwise.
					\end{enumerate}
			\end{enumerate}
		\end{lem}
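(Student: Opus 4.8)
The plan is to reduce everything to a single bookkeeping question: after performing the switch, which letter of $\AAA$ immediately follows the inserted letter $x$ in each row? First I would record two preliminary facts. Since $p_\eps(b_\eps)>1$ forces $b_0\neq a$ and $b_1\neq z$, the prefix insertion leaves the first entry of each row untouched and pushes $z$ to the last entry of row $0$ and $a$ to the last entry of row $1$; hence $\hat{\pp}=\Ext{x}{b_0}{b_1}{\pp}$ is again standard with the same corner letters $a,z$. Moreover, a prefix insertion preserves the relative order of the letters of $\AAA$ within each row, so the defining inequalities of whichever switch is under consideration continue to hold for $\hat{\pp}$; consequently the switch acts on $\hat{\pp}$ by exactly the block rearrangement written in its definition, now applied to the standard form of $\hat{\pp}$. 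By the definition of prefix insertion, in $\hat{\pp}$ the letter $x$ occupies the former position of $b_\eps$ in row $\eps$, i.e.\ $x$ lies immediately before $b_\eps$ in row $\eps$. The whole lemma is therefore the determination of the letter immediately following $x$ in each row of the rearranged pair.

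For part (1) I would argue that this following letter is unchanged. The inner $\{c,d\}$-switch fixes $a$ and $z$ and merely transposes two adjacent super-blocks (each a block of $\AAA$ followed by one of $c,d$) within each row. Whichever super-block contains the slot ``immediately before $b_\eps$'' is carried rigidly to its new location, so $x$ remains immediately before $b_\eps$; thus $\hat{\qq}=\Ext{x}{b_0}{b_1}{\qq}$. The only point requiring care is when $b_\eps$ is the leading letter of a block, so that the slot occupied by $x$ falls on a super-block boundary; here the insertion convention (that $x$ takes $b_\eps$'s old position, hence travels with $b_\eps$) fixes $x$ in $b_\eps$'s super-block and preserves the adjacency.

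Parts (2) and (3) are where the rule can change, and I would treat them by the same tracing, now accounting for the fact that an outer switch moves a corner letter into the interior and promotes a switch letter to a corner: the $\{a,c\}$-switch replaces $a$ by $c$ as the distinguished top-left/bottom-right letter while fixing $z$, and the $\{c,z\}$-switch replaces $z$ by $c$ while fixing $a$. For the $\{a,c\}$-switch, one checks that in row $1$ the inserted letter ends up immediately before $b_1$ in every case, so the row-$1$ anchor is always $b_1$; the genuine case distinction occurs in row $0$. If $b_0\notin\{c,z\}$ the block carrying $x$ moves together with $b_0$ and the anchor is unchanged, which is case (c). If $b_0=c$ then $c$ is promoted to the top-left corner, the slot before it (the tail of the $\overline{u_0}$-block) is transported to just before $z$, and the anchor becomes $z$, which is case (a). If $b_0=z$ then the slot before $z$ (the tail of the $\overline{v_0}$-block) is transported to just before $a$, and the anchor becomes $a$, which is case (b). Part (3) then follows from the identical analysis after interchanging the two rows and the corners $a\leftrightarrow z$, which is the left--right symmetry built into the switch definitions; there the row-$0$ anchor is preserved and the row-$1$ anchor changes to $a$ when $b_1=c$ and to $z$ when $b_1=a$.

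The main obstacle is precisely the boundary bookkeeping isolated above: deciding, when $b_\eps$ coincides with a switch letter $c$ or a corner $a,z$, which super-block the slot occupied by $x$ belongs to, and then reading off the image of that super-block under the rearrangement. Pinning this down against the insertion convention is exactly what produces the special cases (a) and (b); the remaining ``otherwise'' cases and the inner-switch case are then routine verifications from the explicit block forms.
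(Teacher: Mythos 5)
The paper states this lemma without proof, treating it as a direct verification, and your argument is exactly that verification carried out correctly: reducing everything to the question of which letter of $\AAA$ immediately follows $x$ in each row after the rearrangement is the right invariant, since that letter is precisely the datum $b_\eps$ of the insertion rule. Your case analysis (anchor preserved under inner switches and in the row whose corner letter is fixed by the outer switch; anchor moving to $z$ when $b_0=c$ and to $a$ when $b_0=z$ for the $\{a,c\}$-switch, and symmetrically for the $\{c,z\}$-switch) agrees with what one reads off from the explicit block forms of the switch moves, so the proposal is correct.
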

		
	\section{Reduction to Block Permutations}

	    In this section, we will present results that show convenient forms of pairs occur in every Ruazy Class and Extended Rauzy Class. Therefore, by Lemma \ref{LemNamingsClass} we may restrict our arguments to such pairs in order to prove our main result.

	    For the remainder of this paper, a \emph{block} in standard $\pp = (p_0,p_1) \in \irr{\AAA}$ is any set $\BBB\subseteq \AAA$ such that $p_0(\BBB) = p_1(\BBB) = \{k, \dots, k+\#\BBB -1\}$ for some $1<k<\#\AAA - \#\BBB-1$. Note that this does not necessarily imply that each letters in $\BBB$ appears in the same position on both rows of $\pp$. We will also use the term \emph{block} to refer to the pattern that occurs in $\pp$ on letters in $\BBB$.
  
	    \subsection{Special Blocks}

		We say that a standard $\pp\in\irr{\AAA}$ is \emph{composed of blocks} when $\AAA = \BBB_0 \cup \BBB_1\cup \dots \cup \BBB_k$ where $\BBB_0=\{p_0^{-1}(1),p_1^{-1}(1)\}$ are the first and last letters of each row of $\pp$, all $\BBB_j$'s are pair-wise disjoint and for each $j\geq 1$ the set $\BBB_j$ forms a block in $\pp$. We refer the reader to \cite{cFick2014} and \cite{cFick2012} for proofs of the following.

		\begin{prop}\label{PropForms}
		      For every $\pp\in\irr{\AAA}$ there exists $\qq$ in the Extended Rauzy Class of $\pp$ so that $\qq$ is composed of blocks, and each block is of one of the following forms:
				  \begin{enumerate}
				  \setcounter{enumi}{-1}
				    \item An empty block.
				    \item\label{FormOR} Of the form
					    $$ \mtrx{\LL{a_1}{a_n}\; \LL{a_2}{a_{n-1}} \;\LL{ \dots}{\dots} \;\LL{ a_n}{a_1}}$$
					  for some $n\geq 2$.
				    \item\label{FormTwos} Of the form
					    $$ \mtrx{\LL{a_1}{b_1}\;\LL{b_1}{a_1}\;\LL{a_2}{b_2}\;\LL{b_2}{a_2}\;\LL{\dots}{\dots}\;\LL{a_n}{b_n}\;\LL{b_n}{a_n}}$$
					  for some $n\geq 1$.
					 \item\label{FormFourAndTwos} Of the form
					 	$$ \mtrx{\LL{a_1}{a_4}\;\LL{a_2}{a_3}\;\LL{a_3}{a_2}\;\LL{a_4}{a_1}~
					 		\LL{b_1}{c_1}\;\LL{c_1}{b_1}\;\LL{b_2}{c_2}\;\LL{c_2}{b_2}\;\LL{\dots}{\dots}\;\LL{b_n}{c_n}\;\LL{c_n}{b_n}}$$
					 	for some $n\geq 0$.
					 \item\label{FormThreeAndTwos} Of the form
					 	$$ \mtrx{\LL{a_1}{b_1}\;\LL{b_1}{a_1}\;\LL{a_2}{b_2}\;\LL{b_2}{a_2}\;\LL{\dots}{\dots}\;\LL{a_m}{b_m}\;\LL{b_m}{a_m}~\LL{c}{e}\;\LL{d}{d}\;\LL{e}{c}~\LL{f_1}{g_1}\;\LL{g_1}{f_1}\;\LL{f_2}{g_2}\;\LL{g_2}{f_2}\;\LL{\dots}{\dots}\;\LL{f_n}{g_n}\;\LL{g_n}{f_n}}$$
					 	for some $m,n\geq 0$.
				  \end{enumerate}
		\end{prop}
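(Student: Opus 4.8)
The plan is to stay within the class of standard pairs and to use switch moves as the only mechanism for moving through the Extended Rauzy Class, since by the cited structure theorem inner switches connect standard pairs inside $\ClassLab{\pp}$ and outer switches connect them across $\ExtClassLab{\pp}$. Because every Rauzy Class contains a standard pair, I replace $\pp$ by a standard representative at the outset. Throughout, the admissible target is constrained by two invariants of the Extended Rauzy Class: the multiset $\Pof{\pp}$ of cycle lengths of $\Yof{\pp}$, invariant by Corollary \ref{CorPofInvariant}, and --- when $\Pof{\pp}$ consists of odd values only, so that $\Spinof{\pp}$ is defined by Proposition \ref{PropSpinDefined} --- the spin parity $\Spinof{\pp}$, invariant by Proposition \ref{prop_spin}. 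Conceptually the four nonempty shapes are the combinatorial representatives of the Kontsevich--Zorich connected components: Form \ref{FormOR} realizes the hyperelliptic components, Form \ref{FormThreeAndTwos} (carrying the fixed-middle $3$-block $\mtrx{\LL{c}{e}\;\LL{d}{d}\;\LL{e}{c}}$) realizes the non-hyperelliptic component of a class whose $\Pof{\pp}$ has an even entry, and Forms \ref{FormTwos} and \ref{FormFourAndTwos} realize the two spin parities in the all-odd case.

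The argument itself I would run by induction on $N=\#\AAA$, with a finite base-case check on small alphabets. For the inductive step the goal is to use outer switches to arrange a \emph{block} in consecutive interior positions that can be split off, producing a standard irreducible pair on fewer letters. The natural block to extract is a transposed pair $\mtrx{\LL{a_i}{b_i}\;\LL{b_i}{a_i}}$; after removing it one obtains a standard pair whose $\Pof{}$ I can compare with $\Pof{\pp}$ by tracking the defining formula for $\Sof{\pp}$ through the removal. Applying the inductive hypothesis to the reduced pair and then re-inserting the extracted block reassembles a block-composed pair; here the bookkeeping of how an insertion is transported across each switch is exactly the content of Lemma \ref{LemSwitchAndInsert}, and the fact that reinsertion respects the listed shapes must be verified case by case.

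It then remains to pin down which of the four shapes is forced, and this is governed entirely by the invariants. If the class is hyperelliptic one lands in Form \ref{FormOR}; otherwise, if $\Pof{\pp}$ contains an even value then $\Spinof{\pp}$ is undefined and the unique non-hyperelliptic component is realized by the $3$-block-bearing Form \ref{FormThreeAndTwos}, while if $\Pof{\pp}$ is all odd one computes the spin parity of the two candidate representatives in Forms \ref{FormTwos} and \ref{FormFourAndTwos} and checks that they disagree, so that matching $\Spinof{\pp}$ selects the correct shape.

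The main obstacle is the interaction between the two halves of this plan: verifying that outer switches can always be arranged to bring a splittable block into consecutive position (a delicate analysis of how $\{a,c\}$- and $\{c,z\}$-switches act on the cyclic word of a standard pair), and then that reinsertion via Lemma \ref{LemSwitchAndInsert} returns a pair of one of the four listed shapes rather than some hybrid. The spin computation distinguishing Forms \ref{FormTwos} and \ref{FormFourAndTwos} is a second, independent pressure point. Both are precisely the detailed switch-move combinatorics carried out in \cite{cFick2012} and \cite{cFick2014}, which I would invoke rather than reprove here.
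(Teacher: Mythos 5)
The first thing to note is that the paper does not actually prove Proposition \ref{PropForms}: it is stated with an explicit pointer to \cite{cFick2014} and \cite{cFick2012}, and your sketch likewise ends by deferring exactly the two load-bearing steps (maneuvering a splittable block into consecutive position, and checking that reinsertion returns one of the listed shapes rather than a hybrid) to those same references. So as self-contained arguments go, your proposal and the paper are on the same footing, and the extract-a-block/induct/reinsert scheme, organized around Lemma \ref{LemSwitchAndInsert} and the invariance of $\Pof{\pp}$ and $\Spinof{\pp}$, is a reasonable reconstruction of the kind of argument those references carry out.

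That said, two points in your outline would need repair before it proves \emph{this} statement. First, your closing paragraph --- deciding ``which of the four shapes is forced'' by matching $\Pof{\pp}$ and $\Spinof{\pp}$ --- is really a sketch of the later, stronger Proposition \ref{PropTypes}. Proposition \ref{PropForms} forces no single shape: a given $\qq$ is permitted to (and in general must) contain several blocks of \emph{different} forms simultaneously, as in Type \ref{TypeOdd}, which mixes Forms \ref{FormTwos} and \ref{FormThreeAndTwos}, and as in every type that carries empty blocks alongside a nonempty one. The content of \ref{PropForms} is only that \emph{each} block individually lands in the list, so the invariant-matching step is answering a different question. Second, the inductive mechanism of extracting a transposed pair $\mtrx{\LL{b}{c}\;\LL{c}{b}}$ has no purchase on the hyperelliptic case: a block of Form \ref{FormOR} contains no such pair, and a pair consisting of a single Form \ref{FormOR} block admits no inner switch at all (a fact the paper later exploits in Lemma \ref{LemOR}), so that family must be identified and split off before your induction can begin. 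Neither objection is fatal --- both are dealt with in \cite{cFick2012} --- but as written the sketch argues for a slightly different theorem and silently skips the one class of pairs on which its induction cannot start.
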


		Pairs of this form will be very helpful for the proofs that follow, and we will already see one immediate benefit: for such $\pp$ such that $\Spinof{\pp}$ is well-defined, this value may be calculated quite easily. The following Lemma and Corollary are direct results from Section 2.1 of \cite{cFick2014}.

		\begin{lem}\label{LemSpinTwoBlocks}
			    Let $\pp = \mtrx{\LL{a}{z}~B_1~\LL{s}{s}~B_2~\LL{z}{a}}, \qq = \mtrx{\LL{a}{z}~B_1~\LL{z}{a}}\mbox{ and } \rr = \mtrx{\LL{a}{z}~B_2~\LL{z}{a}}.$
			    If $\Spinof{\qq}$ and $\Spinof{\rr}$ are well defined then so is $\Spinof{\pp}$ and
				$$ \Spinof{\pp} + 1 = \Spinof{\qq} + \Spinof{\rr}~\mathrm{mod }2.$$
		\end{lem}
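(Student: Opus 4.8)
Given $\pp$ split into two blocks $B_1$ and $B_2$ sharing the outer frame $\LL{a}{z}\dots\LL{z}{a}$ (with a marked singleton $\LL{s}{s}$ between them), and the two ``sub-pairs'' $\qq$ and $\rr$ obtained by keeping only one block each, show the spin parities satisfy $\Spinof{\pp}+1 = \Spinof{\qq}+\Spinof{\rr} \pmod 2$ whenever the right-hand side is defined.

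Here is how I would approach it.

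\medskip

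The plan is to reduce the spin parity to the Arf invariant of a quadratic form (equivalently, a $\mathbb{Z}/2$ intersection form) on the homology of the associated surface, and to show that the block decomposition of $\pp$ corresponds to a \emph{direct sum} (connected sum) decomposition of that form. First I would recall the formula of Kontsevich--Zorich: for a permutation all of whose singularity degrees $\ell_j$ are even (so $\Pof{\pp}$ consists of odd cycle lengths, matching Proposition \ref{PropSpinDefined}), the spin parity is the Arf invariant of a quadratic form $q$ defined on $H_1$ of the surface over $\mathbb{Z}/2$, computed as $\Spinof{\pp} = \sum_i q(\alpha_i)q(\beta_i)$ over a symplectic basis $\{\alpha_i,\beta_i\}$. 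The key structural observation is that the frame $\LL{a}{z}\dots\LL{z}{a}$ together with the marked letter $\LL{s}{s}$ forces the homology to split: cycles supported on the letters of $B_1$ have zero $\mathbb{Z}/2$-intersection with cycles supported on the letters of $B_2$, because the only way a saddle connection from one block meets one from the other is through the shared boundary segments $a$, $z$, and $s$, and the ``two blocks'' arrangement is precisely designed so those crossings cancel mod $2$.

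\medskip

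Concretely, the steps are: (1) Fix a symplectic basis adapted to the block structure, so that each basis vector is supported on the letters of exactly one of $B_1$ or $B_2$ (this is where the form of $\pp$ in Proposition \ref{PropForms} is used — the blocks are already in a normal form where such adapted bases are transparent). (2) Show the intersection form is block-diagonal with respect to this basis, i.e. $H_1(\pp) \cong H_1(\qq) \oplus H_1(\rr)$ as symplectic $\mathbb{Z}/2$-spaces, up to a correction coming from the single shared marked segment. (3) Under a direct sum of quadratic forms, the Arf invariant is additive, so $\mathrm{Arf}(q_\pp) = \mathrm{Arf}(q_\qq) + \mathrm{Arf}(q_\rr)$; the extra $+1$ on the left-hand side of the stated identity comes from accounting for the shared boundary/marked data that is double-counted when $\qq$ and $\rr$ are formed separately (each of $\qq,\rr$ re-uses the frame, contributing a fixed offset). (4) Translate back through $\Spinof{\cdot} = \mathrm{Arf}(\cdot)$ to obtain the congruence.

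\medskip

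The main obstacle I anticipate is step (3): pinning down the exact source of the ``$+1$'' correction term. Additivity of the Arf invariant over an honest orthogonal direct sum is clean, but $\pp$ is not literally $\qq \sqcup \rr$ — the two sub-pairs each carry their own copy of the outer frame and of the marked letter $s$, so when we glue the surfaces of $\qq$ and $\rr$ along that shared data to recover the surface of $\pp$, one basis element's contribution $q(\alpha)q(\beta)$ is either created or destroyed. I would isolate this by computing the spin of the smallest nontrivial case explicitly (e.g. each block a single $\LL{a_i}{a_j}$-style pattern of Form \ref{FormTwos} or \ref{FormOR}) to fix the sign and the constant, then argue that the general case follows because adding further letters to $B_1$ or $B_2$ only extends the corresponding summand and leaves the shared-frame contribution unchanged. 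Since the lemma is cited as a direct consequence of Section 2.1 of \cite{cFick2014}, I would expect the cleanest route is to quote the quadratic-form additivity computation from there rather than rederive the surface topology from scratch; the genuine work is the bookkeeping that identifies the overlap as contributing exactly $1 \pmod 2$.
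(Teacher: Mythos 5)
You should first note that the paper itself offers no proof of this lemma: it is quoted verbatim as a ``direct result from Section 2.1 of \cite{cFick2014}.'' So there is nothing in-paper to compare against, but your Arf-invariant strategy is indeed the mechanism behind the cited computation, and your diagnosis of the $+1$ as a double-counted frame contribution is the right one. It is consistent with Corollary \ref{CorSpinBlocks}: the bare frame $\mtrx{\LL{a}{z}\;\LL{z}{a}}$ already has spin $1$, each of $\qq$ and $\rr$ carries its own copy of that frame, and $\pp$ carries only one.

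The concrete gap is your step (2). As stated, $H_1(\pp)\cong H_1(\qq)\oplus H_1(\rr)$ is false: one has $N_\pp = N_\qq+N_\rr-1$ while $\Pof{\pp}=\Pof{\qq}\sqcup\Pof{\rr}$ as multisets, so $2g_\pp = N_\pp-n_\pp+1 = 2g_\qq+2g_\rr-2$. The space for $\pp$ is one hyperbolic plane \emph{smaller} than the direct sum, not equal to it ``up to a correction,'' and that missing plane is exactly where the $+1$ lives. Relatedly, the orthogonality of the two blocks is not a matter of crossings ``cancelling mod $2$'': in the combinatorial model two letters pair nontrivially iff they appear in opposite orders in the two rows, so letters of $B_1$ and $B_2$ are simply never linked (the blocks occupy disjoint consecutive position intervals in \emph{both} rows) — no cancellation is needed. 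The letters that obstruct a naive coordinate splitting are $a$ and $z$, which are linked with \emph{every} other letter; a change of basis is required to exhibit one frame plane (contributing $1$) plus summands supported on $\BBB_1$- and $\BBB_2$-adapted vectors reproducing the non-frame contributions of $\qq$ and $\rr$. Your plan to pin the constant down by one base case does not by itself supply this: the statement that ``adding further letters to $B_1$ only extends the corresponding summand'' \emph{is} the basis-adaptation argument, so you must either carry out that change of basis explicitly or, as the paper does, cite the additivity computation of \cite{cFick2014} outright.
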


		\begin{cor}\label{CorSpinBlocks}
			If standard $\pp\in\irr{\AAA}$ is composed of exactly $N_1$ empty blocks, $N_2$ blocks of Form \ref{FormTwos}, $N_3$ blocks of Form \ref{FormFourAndTwos} and $N_4$ blocks of Form \ref{FormOR} with $n=5$, then $\Spinof{\pp}$ exists and is equal to
				\begin{equation}
					\Spinof{\pp} = 1 + N_3 + N_4~\mathrm{mod }2.
				\end{equation}
		\end{cor}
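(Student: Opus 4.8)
The plan is to obtain the formula from the additivity rule of Lemma \ref{LemSpinTwoBlocks} together with a direct computation of the spin of each individual block. Since an empty block contributes no letters, it does not alter $\pp$ and hence cannot affect $\Spinof{\pp}$; so $N_1$ will play no role, and I work with the $k = N_2 + N_3 + N_4$ nonempty blocks $\BBB_1, \dots, \BBB_k$. For each such block set $\BBB_i^{\mathrm{std}} = \mtrx{\LL{a}{z}~\BBB_i~\LL{z}{a}}$, the standard pair built from the frame together with that one block. The core reduction I aim to prove is
$$ \Spinof{\pp} \equiv \sum_{i=1}^{k}\Spinof{\BBB_i^{\mathrm{std}}} + (k-1) \pmod 2, $$
obtained by repeatedly invoking Lemma \ref{LemSpinTwoBlocks} to peel off one block at a time; the $k-1$ splittings each contribute a single $+1$ correction, which accounts for the second summand (the base case $k=1$ incurs no correction, and $k=0$ is the bare frame).

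Next I would settle the base cases by evaluating $\Spinof{\BBB^{\mathrm{std}}}$ for each admissible shape. Computing $\Pof{\BBB^{\mathrm{std}}}$ directly from the explicit patterns in Proposition \ref{PropForms} should show in every case that it consists only of odd values, so each $\Spinof{\BBB^{\mathrm{std}}}$ is defined by Proposition \ref{PropSpinDefined}; feeding these into Lemma \ref{LemSpinTwoBlocks} at each stage then shows inductively that $\Spinof{\pp}$ exists, as the corollary asserts. The single-block values, which are exactly the computations carried out in Section 2.1 of \cite{cFick2014}, come out to $\Spinof{\BBB^{\mathrm{std}}} \equiv 1$ for a block of Form \ref{FormTwos} and $\Spinof{\BBB^{\mathrm{std}}} \equiv 0$ for a block of Form \ref{FormFourAndTwos} or of Form \ref{FormOR} with $n = 5$, while the bare frame has spin $\equiv 1$ (covering the degenerate case $k=0$).

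Granting the reduction and these base values, the conclusion is a parity count. The $N_2$ blocks of Form \ref{FormTwos} contribute $1$ each to the sum while the $N_3 + N_4$ remaining blocks contribute $0$, so
$$ \Spinof{\pp} \equiv N_2 + (N_2 + N_3 + N_4 - 1) \equiv N_3 + N_4 + 1 \pmod 2, $$
since $2N_2 \equiv 0$. This is the asserted value $1 + N_3 + N_4 \pmod 2$, and it is manifestly independent of $N_1$, consistent with empty blocks being invisible.

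The main obstacle is justifying each peeling step against Lemma \ref{LemSpinTwoBlocks} as stated: that lemma splits a pair only at an interior singleton $\LL{s}{s}$, which it then removes, whereas two adjacent blocks of even width (Form \ref{FormTwos} or Form \ref{FormFourAndTwos}) expose no singleton at their common boundary. The induction therefore cannot naively cut between consecutive blocks; I would need either the more general additivity underlying Lemma \ref{LemSpinTwoBlocks} in \cite{cFick2014}, or an argument that repositions the decomposition so that each application of the lemma acts at a genuine singleton (for instance the central letter of a Form \ref{FormOR} block with $n = 5$), all the while verifying that every intermediate pair $\qq$, $\rr$ remains standard with all-odd $\Pof{}$ so that its spin stays defined. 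Once each split is matched to the hypotheses of Lemma \ref{LemSpinTwoBlocks}, the remaining parity bookkeeping is immediate.
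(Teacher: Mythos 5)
Your argument is correct and is the intended one. The paper gives no proof of this corollary --- it and Lemma \ref{LemSpinTwoBlocks} are simply imported from Section 2.1 of \cite{cFick2014} --- and the reconstruction you propose (peel off one block at a time via the additivity of Lemma \ref{LemSpinTwoBlocks}, evaluate each single-block spin, then do the parity count) is exactly the argument that reference carries out; your base values and the final bookkeeping $2N_2+N_3+N_4-1\equiv 1+N_3+N_4 \pmod 2$ are right and agree with the ``Odd/Even Spin'' labels in Proposition \ref{PropTypes}. Two small points. First, in this paper an ``empty block'' is not the literal absence of letters but a singleton column carrying one letter $s$ with $p_0(s)=p_1(s)$ (compare Lemma \ref{LemEmpty} and the hyperelliptic case in the proof of Proposition \ref{PropFullGroup}); it contributes a length-one cycle to $\Pof{\pp}$ but, as you conclude for a slightly different reason, no change in spin, since applying Lemma \ref{LemSpinTwoBlocks} with $B_2=\emptyset$ deletes the singleton and leaves the parity fixed (the two-letter frame has spin $1$). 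Second, and this resolves the obstacle you flag at the end: those singletons are precisely the separators $\mtrx{\LL{s}{s}}$ appearing between consecutive nonempty blocks in every decomposition the paper actually produces (see Lemma \ref{LemCombinedInserts} and the proof of Proposition \ref{PropNamingsContainEven}), so each peeling step meets the hypotheses of Lemma \ref{LemSpinTwoBlocks} verbatim. Only for a pair with two genuinely adjacent nonempty blocks --- which the corollary as stated does not exclude, but which never arises in its applications here --- would you need the repositioning by an inner switch or the more general additivity from \cite{cFick2014} that you mention.
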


		The next proposition is a result in \cite{cFick2012}, and it is essential to the main result of that work. This is a combinatorial statement of one implication of the classification theorems from \cite{cKonZor2003}.

		\begin{prop}\label{PropTypes}
			For every $\pp\in\irr{\AAA}$ there exists $\qq$ in the Extended Rauzy Class of $\pp$ so that $\qq$ is composed of blocks and is of one of the following forms:
				\begin{enumerate}
					\item\label{TypeHyp} \textbf{Hyperelliptic:} $\qq$ is composed of at most one block of Form \ref{FormOR} and the remaining blocks are empty.
					\item\label{TypeEvenOdd} \textbf{Odd Cycles, Odd Spin:} $\qq$ is composed of blocks of Form \ref{FormTwos} and empty blocks.
					\item\label{TypeTwosEven} \textbf{3/1 Cycles, Even Spin:} $\qq$ is composed of exactly one block of Form \ref{FormOR} with $n=5$ and the rest are of Form \ref{FormTwos} or empty.
					\item\label{TypeEvenEven} \textbf{Odd Cyclyes, Even Spin:} $\qq$ is composed of exactly one block of Form \ref{FormFourAndTwos} and the rest are of Form \ref{FormTwos} or empty.
					\item\label{TypeOdd} \textbf{Even Cycles:} $\qq$ is composed of blocks that are empty, Form \ref{FormTwos} and Form \ref{FormThreeAndTwos}, and at least one block is Form \ref{FormThreeAndTwos}.
				\end{enumerate}
		\end{prop}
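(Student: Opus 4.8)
The plan is to begin from the block representative furnished by Proposition \ref{PropForms} and then consolidate its blocks, by switch paths that remain inside the Extended Rauzy Class, until only the blocks permitted by one of the five types survive. By Proposition \ref{PropForms} we may replace $\pp$ with some $\qq$ in its Extended Rauzy Class that is composed of blocks, each empty or of Form \ref{FormOR}, \ref{FormTwos}, \ref{FormFourAndTwos}, or \ref{FormThreeAndTwos}. Throughout, the two quantities that organize the argument are class invariants: the cycle-length multiset $\Pof{\qq}$ is preserved by Corollary \ref{CorPofInvariant}, and the spin parity $\Spinof{\qq}$, when defined, is preserved by Proposition \ref{prop_spin}. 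Because every merging move I use is realized by a switch path, it stays within the class (by the switch-path characterization of Extended Rauzy Classes) and so automatically preserves both invariants; they therefore never obstruct a reduction, and only dictate which of the five forms is the terminal one.

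First I would compute, directly from \eqref{eq_SofPi}, the cycle contribution of each block form to $\Pof{\cdot}$: an empty block contributes nothing, a Form \ref{FormTwos} block contributes length-$3$ cycles, a Form \ref{FormFourAndTwos} block contributes odd cycles, a Form \ref{FormThreeAndTwos} block contributes exactly one \emph{even} cycle alongside length-$3$ cycles, and a Form \ref{FormOR} block contributes a cycle whose length is read off from $n$ and may be even or odd. I would first peel off the hyperelliptic case, Type \ref{TypeHyp}: hyperellipticity is itself a class invariant, and a pair carrying it reduces to a single Form \ref{FormOR} block among empty blocks. For the remaining classes the governing dichotomy is the parity of $\Pof{\qq}$. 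If $\Pof{\qq}$ contains an even value then $\Spinof{\qq}$ is undefined by Proposition \ref{PropSpinDefined}, and I would show every block can be driven to empty, Form \ref{FormTwos}, or Form \ref{FormThreeAndTwos} with at least one of the last surviving, landing in Type \ref{TypeOdd}. If every value of $\Pof{\qq}$ is odd then $\Spinof{\qq}$ is defined, no Form \ref{FormThreeAndTwos} block can survive, and the reduction must terminate in one of Types \ref{TypeEvenOdd}, \ref{TypeTwosEven}, or \ref{TypeEvenEven}, selected by the value of $\Spinof{\qq}$.

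The engine is a family of block-merging lemmas, each asserting that a specified pair of adjacent blocks in a standard block-composed pair can be replaced, by an explicit switch path, with a configuration closer to canonical. The spin bookkeeping is supplied by Lemma \ref{LemSpinTwoBlocks} and Corollary \ref{CorSpinBlocks}, which express $\Spinof{\qq}$ for such a pair as $1 + N_3 + N_4 \bmod 2$, where $N_3$ counts Form \ref{FormFourAndTwos} blocks and $N_4$ counts Form \ref{FormOR} blocks with $n=5$. Since each of these two block types contributes $1$ to the spin, a pair of them contributes $0$; so I would use the merging moves to reduce their combined count to at most one while driving the remaining nontrivial blocks to Form \ref{FormTwos}. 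Odd spin then leaves no such block (Type \ref{TypeEvenOdd}); even spin leaves exactly one, which is a Form \ref{FormOR} ($n=5$) block precisely when every surviving cycle has length $3$ (Type \ref{TypeTwosEven}) and a Form \ref{FormFourAndTwos} block otherwise (Type \ref{TypeEvenEven}).

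The step I expect to be the main obstacle is constructing the block-merging switch paths themselves and verifying their terminal block shapes, since this is the genuinely combinatorial part and must be carried out form by form. In particular, separating Types \ref{TypeTwosEven} and \ref{TypeEvenEven}, which both realize even spin with all-odd cycles, requires checking that the all-length-$3$ even-spin case is realized only through the Form \ref{FormOR} ($n=5$) model while the remaining even-spin classes are realized through Form \ref{FormFourAndTwos}, and that neither model is accidentally hyperelliptic. That these models exhaust exactly the invariant values in play is the delicate point, and it ultimately rests on the classification of \cite{cKonZor2003}.
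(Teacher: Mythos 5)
The paper does not prove this proposition at all: it is imported verbatim from \cite{cFick2012}, with the remark that it is a combinatorial restatement of one implication of the Kontsevich--Zorich classification. So there is no in-paper argument to compare against, and your proposal must stand on its own. As an outline it has the right general shape --- start from the block representative of Proposition \ref{PropForms}, normalize blocks by switch paths, and let $\Pof{\qq}$ and $\Spinof{\qq}$ dictate the terminal type --- but it is not a proof. The entire mathematical content lives in the ``family of block-merging lemmas'' that you defer: one must exhibit explicit switch paths that convert an arbitrary collection of blocks of Forms \ref{FormOR}--\ref{FormThreeAndTwos} into exactly one of the five listed configurations, and verify termination. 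That is precisely the work done in \cite{cFick2012} (compare the utility lemmas in the paper's Appendix B, which are the flavor of computation required), and acknowledging it as ``the main obstacle'' does not discharge it. Note also that the invariants do not merely ``never obstruct a reduction''; the hard direction is showing that every configuration consistent with a given value of $(\Pof{\qq},\Spinof{\qq})$ is actually \emph{reachable} from every other, which is where the classification theorem genuinely enters.

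Beyond the missing engine, your preliminary cycle computations are wrong in ways that would derail the case analysis. A Form \ref{FormTwos} block with $n$ transposed pairs contributes a single odd cycle of length $2n+1$ to $\Pof{\qq}$ (see $\Sof{\pp}=(a,c_n,\dots,c_1,b_1,\dots,b_n,z)$ in the proof of Lemma \ref{LemEvenSpin}), not ``length-$3$ cycles''; only the $n=1$ block gives a $3$-cycle, which is why Type \ref{TypeTwosEven} has $\Pof{\qq}=\{3^n,1^m\}$ while Type \ref{TypeEvenOdd} realizes arbitrary odd partitions. A Form \ref{FormThreeAndTwos} block contributes \emph{two} even cycles, of lengths $2m+2$ and $2n+2$ (see $\Sof{\pp}$ in the proof of Lemma \ref{LemEvenCycles}), not one even cycle alongside $3$-cycles; this matters because even cycles must be produced in pairs by such blocks, which constrains how Type \ref{TypeOdd} representatives are assembled. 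Since your parity dichotomy and your separation of Types \ref{TypeTwosEven} and \ref{TypeEvenEven} both route through these contributions, the bookkeeping has to be corrected before the reduction scheme can even be set up, let alone carried out.
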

		
		\begin{cor}\label{CorUniqueType}
			If $\pp$ is of one of the Types in Proposition \ref{PropTypes} and $\qq$ is in the same Extended Rauzy Class as $\pp$, then there exists $\rr$ in the Rauzy Class of $\qq$ that is the same Type as $\pp$.
		\end{cor}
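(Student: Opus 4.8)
The plan is to separate the statement into two independent parts: that any Type pair lying in the Extended Rauzy Class of $\qq$ automatically has the \emph{same} Type as $\pp$, and that such a Type pair can already be found inside the (non-extended) Rauzy Class $\ClassLab{\qq}$. The first part is the easy one. The five Types of Proposition~\ref{PropTypes} are told apart by the list $\Pof{\cdot}$, by the parity $\Spinof{\cdot}$ when it is defined (Proposition~\ref{PropSpinDefined}), and by hyperellipticity. By Corollary~\ref{CorPofInvariant} the list $\Pof{\cdot}$ is constant on an Extended Rauzy Class, by Proposition~\ref{prop_spin} so is $\Spinof{\cdot}$ whenever it is defined, and hyperellipticity is an Extended Rauzy Class invariant as well. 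Since $\pp$ is itself a Type pair and $\ExtClassLab{\pp}=\ExtClassLab{\qq}$, every Type pair in this common class must carry exactly $\pp$'s values of these invariants, hence be of $\pp$'s Type. It therefore suffices to produce \emph{some} Type pair in $\ClassLab{\qq}$.

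For the second part I would work with standard pairs. The Rauzy Class $\ClassLab{\qq}$ contains a standard pair and is closed under inner switch moves, so I may replace $\qq$ by a standard representative; moreover, by the switch-path description of classes, the Rauzy Classes contained in $\ExtClassLab{\qq}$ are precisely the connected components of the standard pairs of $\ExtClassLab{\qq}$ under \emph{inner} switches, while the whole Extended Rauzy Class becomes connected once outer switches are allowed. Proposition~\ref{PropTypes} supplies a Type pair in at least one such component, so the goal reduces to the combinatorial claim that \emph{every} inner-switch component of $\ExtClassLab{\qq}$ contains a Type pair.

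I would prove this by propagating the ``contains a Type pair'' property across single outer switches and then invoking connectivity. Starting from a component known to contain a Type pair, one inner-switches within the component to a standard pair admitting the required outer switch, applies the explicit $\{a,d\}$- or $\{d,z\}$-switch, and re-expresses the result as a block-composed pair: here $\Pof{\cdot}$ is unchanged by Corollary~\ref{CorPofInvariant}, while the spin parity is read directly off the new block counts by Corollary~\ref{CorSpinBlocks} (or, inductively, Lemma~\ref{LemSpinTwoBlocks}). Verifying that the outcome reorganizes, under inner switches, into a Type pair of the same Type then lets connectivity carry a Type pair into every component, and in particular into $\ClassLab{\qq}$.

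The hard part will be exactly this case-by-case verification that outer switches send Type pairs to inner-switch relatives of Type pairs of the same Type, carried out over all five Types and both kinds of outer switch. The genuinely delicate invariant is the spin parity, which separates Types~\ref{TypeEvenOdd}, \ref{TypeTwosEven} and \ref{TypeEvenEven} but is invisible to $\Pof{\cdot}$; Corollary~\ref{CorSpinBlocks}, by computing $\Spinof{\cdot}$ from the numbers of blocks of each Form, is the crucial bookkeeping device for this. One must also pin down when an Extended Rauzy Class genuinely splits into several Rauzy Classes, since the statement is vacuous for those Types whose canonical pairs admit no outer switch and for which the extended and non-extended classes therefore coincide.
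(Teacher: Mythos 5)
Your first paragraph is, essentially verbatim, the paper's entire proof: the paper disposes of Type \ref{TypeHyp} by direct verification (only hyperelliptic pairs occur in its Extended Rauzy Class), and for the remaining Types observes that $\Pof{\cdot}$ and $\Spinof{\cdot}$ --- invariant on the Extended Rauzy Class by Corollary \ref{CorPofInvariant} and Proposition \ref{prop_spin} --- determine the Type, so any Type pair in $\ExtClassLab{\pp}=\ExtClassLab{\qq}$ must have $\pp$'s Type. Where you diverge is in isolating, as a second obligation, the existence of a Type pair inside the non-extended class $\ClassLab{\qq}$, and proposing to establish it by propagating Type representatives across outer switches between the inner-switch components of $\ExtClassLab{\qq}$. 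The paper does no such thing: its proof consists only of the uniqueness half, and the existence of Type representatives at the Rauzy-Class level is taken as supplied by Proposition \ref{PropTypes} and the results of \cite{cFick2012} from which that proposition is quoted (the corollary is indeed invoked later, e.g.\ in Lemma \ref{LemEvenSpin}, precisely for a representative in a Rauzy Class rather than an Extended one). You are right that Proposition \ref{PropTypes} as literally stated only places a Type pair somewhere in the Extended class, so absent a Rauzy-Class-level citation something like your argument is needed; but as written it is a plan, not a proof --- the case-by-case verification that an outer switch sends a Type pair to an inner-switch relative of a Type pair of the same Type is exactly the content to be supplied, and none of it is carried out. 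In short: the half you actually prove coincides with the paper's argument, and the half you add is a reasonable but unexecuted sketch of a step the paper handles by citation.
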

		
		\begin{proof}
			The reader may directly verify that $\pp$ of Type \ref{TypeHyp} can only have Type \ref{TypeHyp} pairs in its Extended Rauzy Class. Otherwise $\Pof{\pp}$ and $\Spinof{\pp}$ (which is well defined for $\pp$ of Types \ref{TypeEvenOdd} -- \ref{TypeEvenEven}) uniquely determine a Type. By Corollary \ref{CorPofInvariant} and Proposition \ref{prop_spin}, any $\qq$ in the Extended Rauzy Class of $\pp$ must satisfy $\Pof{\qq}=\Pof{\pp}$ and $\Spinof{\qq}=\Spinof{\pp}$. So if $\qq$ is of a Type, it must be the same Type as $\pp$.
		\end{proof}

		One final result also comes from \cite{cFick2012}.
		\begin{prop}\label{PropSufficient}
			If $\pp\in\irr{\AAA}$ is of one of the Types in Proposition \ref{PropTypes}, $\tilde{\qq}\in \irr{\AAA'}$ is the same Type as $\pp$ and $\Pof{\pp} = \Pof{\tilde{\qq}}$, then there exists $\qq$ in the Extended Rauzy Class of $\pp$ such that $\qq = \tilde{\qq}\circ \nu'$, where $\nu':\AAA \to \AAA'$ is a bijection.
		\end{prop}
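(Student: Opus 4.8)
The plan is to exploit the rigidity of the canonical block-composed representatives supplied by Proposition~\ref{PropTypes}. The central observation is that, once a Type is fixed, the multiset of blocks appearing in such a representative is completely determined by $\Pof{\pp}$: within each Type the non-empty blocks have prescribed forms, and their sizes are recovered from the cycle lengths recorded in $\Pof{\pp}$ by Remarks~\ref{RMK_CYCLES} and \ref{RMK_PofP}. Hence $\pp$ and $\tilde{\qq}$, being of the same Type with $\Pof{\pp}=\Pof{\tilde{\qq}}$, are assembled from identical collections of blocks---matching in form and in size---and differ only in the names of their letters and in the order in which the blocks are listed along the two rows.

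First I would turn this into an explicit bijection. Matching the boundary pair $\{p_0^{-1}(1),p_1^{-1}(1)\}$ of $\pp$ to that of $\tilde{\qq}$, pairing off the repeated Form~\ref{FormTwos} blocks of equal size, and matching the unique distinguished block (the Form~\ref{FormOR}, Form~\ref{FormFourAndTwos}, or Form~\ref{FormThreeAndTwos} block occurring in Types~\ref{TypeTwosEven}, \ref{TypeEvenEven}, and \ref{TypeOdd}), one builds a bijection $\nu':\AAA\to\AAA'$ for which $\tilde{\qq}\circ\nu'$ is a pair on $\AAA$ built from exactly the same multiset of blocks as $\pp$, merely arranged in a possibly different order. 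It then suffices to prove that any two canonical representatives of a common Type that use the same block multiset lie in a single Extended Rauzy Class; taking $\qq=\tilde{\qq}\circ\nu'$ would finish the proof.

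To establish that the order of blocks is immaterial inside $\ExtClassLab{\pp}$, I would argue by induction on the number of Form~\ref{FormTwos} blocks, peeling a single such block off both $\pp$ and $\tilde{\qq}\circ\nu'$. Deleting a Form~\ref{FormTwos} block preserves the defining structure of the Type (and, in the odd-cycle Types~\ref{TypeEvenOdd}--\ref{TypeEvenEven}, leaves the spin unchanged by Corollary~\ref{CorSpinBlocks}), so the two smaller pairs remain of the same Type with equal cycle data and are joined by a switch path $\omega$ by the inductive hypothesis. Reinstating the deleted block via the prefix insertions of Lemma~\ref{LemPrefixes}, I would transport the insertion data along $\omega$ using Lemma~\ref{LemSwitchAndInsert}: inner switches carry the insertion rule unchanged, while the explicit substitutions recorded there for outer switches let one follow the rule through boundary moves, so that the reinserted pairs are again related by a switch path. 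The base case is a pair consisting only of the boundary pair together with the distinguished block---or, in Type~\ref{TypeHyp}, a single Form~\ref{FormOR} block with no Form~\ref{FormTwos} blocks to peel---where the claim is immediate from the relabeling $\nu'$.

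The principal obstacle is this inductive step, specifically the interaction between the distinguished block and the boundary letters $a=p_0^{-1}(1)$ and $z=p_1^{-1}(1)$. Inner switches permute the repeated Form~\ref{FormTwos} blocks without difficulty, but sliding the distinguished block to an arbitrary position, and reselecting the boundary pair, can only be accomplished with outer switches, whose action alters the insertion rule according to the case analysis of Lemma~\ref{LemSwitchAndInsert}. One must check that these alterations always resolve to a genuine prefix insertion on the target pair, and that every intermediate pair produced along the way remains irreducible and standard so that the switch-move apparatus continues to apply. Carrying out this bookkeeping uniformly across Types~\ref{TypeTwosEven}--\ref{TypeOdd} is where the real work lies.
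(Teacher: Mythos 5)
The paper offers no proof of Proposition \ref{PropSufficient}; it is imported wholesale from \cite{cFick2012}, where it is essentially the combinatorial form of the Kontsevich--Zorich connectedness theorem. So the only question is whether your argument stands on its own, and it does not: the ``central observation'' on which everything rests is false for Types \ref{TypeEvenEven} and \ref{TypeOdd}. The multiset of blocks in a canonical representative is \emph{not} determined by the Type together with $\Pof{\pp}$, because the correspondence between cycle lengths and blocks involves genuine choices. Concretely, for Type \ref{TypeEvenEven} with $\Pof{\pp}=\{5,7\}$ one canonical representative carries a Form \ref{FormFourAndTwos} block realizing the $5$-cycle ($n=0$) alongside a Form \ref{FormTwos} block realizing the $7$-cycle, while another carries a Form \ref{FormFourAndTwos} block realizing the $7$-cycle ($n=1$) alongside a Form \ref{FormTwos} block realizing the $5$-cycle; both are legitimate Type \ref{TypeEvenEven} representatives on $13$ letters with the same $\Pof{}$, yet their block multisets differ, so no relabeling $\nu'$ can make them agree. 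For Type \ref{TypeOdd} the situation is worse: each Form \ref{FormThreeAndTwos} block absorbs \emph{two} even cycles (as $\CCC^-$ and $\CCC^+$), so when $\Pof{\pp}$ has four or more even entries the pairing of cycles into blocks, and the left/right assignment within each block, are free choices producing distinct block multisets. Your reduction to ``representatives with the same block multiset'' therefore skips exactly the hardest content of the proposition: one must also exhibit switch paths that redistribute cycle lengths between the distinguished block and the Form \ref{FormTwos} blocks and that re-pair the even cycles among the Form \ref{FormThreeAndTwos} blocks. (The paper tacitly uses such a move in the Type \ref{TypeEvenEven} case of Proposition \ref{PropFullGroup}, but it is a nontrivial fact to be proved, not assumed.)

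Even granting equal block multisets, the induction you describe is not carried out. Reinserting a deleted Form \ref{FormTwos} block means transporting a $2k$-letter insertion rule along a switch path containing outer switches, whereas Lemma \ref{LemSwitchAndInsert} tracks a single inserted letter through a single switch; and you yourself flag the uniform bookkeeping across Types \ref{TypeTwosEven}--\ref{TypeOdd} as ``where the real work lies.'' An argument that defers its principal obstacle is a plan, not a proof; as written the proposal covers, at best, Type \ref{TypeEvenOdd}, and even that only modulo the unverified inductive step.
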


	  \subsection{Prefix Insertions: Combining Blocks}

		As indicated in Lemma \ref{LemSwitchAndInsert}, we may apply a fixed Prefix Insertion rule to all standard elements of a Rauzy Class, as inner switch moves commute with insertions. Therefore, if $\hat{\pp}= \Ext{x}{b_0}{b_1}{\pp}$ has a well-defined value $\Spinof{\hat{\pp}}$, then so does $\hat{\qq} = \Ext{x}{b_0}{b_1}{\qq}$ for every $\qq\in \ClassLab{\pp}$ and the spin values all agree. This argument was used in \cite{cBoi2013}. However, when considering Extended Rauzy Classes, this is no longer the case. In order to compensate, we must consider \emph{all Prefix Insertions} that satisfy certain conditions. We then show a parity argument: two such Prefix Insertions on $\pp$ with resulting pairs $\hat{\pp}$ and $\hat{\pp}'$ satisfy $\Spinof{\hat{\pp}} = \Spinof{\hat{\pp}'}$ if and only if $\Nof{\hat{\pp}}$ and $\Nof{\hat{\pp}'}$ are related by an \emph{even} permutation.
	  
	  	
	  	We begin with two introductory lemmas. In the first lemma,  $\pp\in\irr{\AAA}$ is composed of two blocks that each represent an odd length cycle of $\Pof{\pp}$. The Prefix Insertion rule will result in $\hat{\pp}$ with one odd length cycle in $\Pof{\hat{\pp}}$ that satisfies $\Pof{\hat{\pp}} = \Pof{\pp}$, regardless of which insertion rule was chosen. The second lemma considers $\pp$ so that $\Pof{\pp}$ contains exactly two cycles, each of even length. Again, the resulting $\hat{\pp}$ has exactly one cycle of odd length. However, the spin may vary depending on choice of insertion rule.

		\begin{lem}\label{LemJoining1}
		      Let $\pp\in\irr{\AAA}$ be of the form
			  $$ \pp = \mtrx{\LL{a}{z}\; B_1 \LL{s}{s}\; B_2\; \LL{z}{a}}$$
		      where $B_j$ is Form \ref{FormTwos} or \ref{FormFourAndTwos}. Also, let $\BBB_1 = B_1 \cup\{s\}$ and $\BBB_2 = B_2 \cup \{z\}$ denote the subalphabets of $\AAA$ that consist of the letters in block $B_1$ with $s$ and $B_2$ with $z$ respectively. If $$\hat{\pp} = \Ext{x}{b}{c}{\pp}$$
		      where $x\notin\AAA$, $b\in \BBB_2$ and $c\in\BBB_1$, then $\Spinof{\pp} = \Spinof{\hat{\pp}}$.
		      Furthermore, if $\hat{\pp}' = \Ext{x}{b'}{c'}{\pp}$ for some other choice of $b'\in\BBB_2$ and $c'\in\BBB_1$, then
			    $$ \Nof{\hat{\pp}} = \Nof{\hat{\pp}'}* \nu$$
		      where $\nu \in \Zof{\pp}$.
		\end{lem}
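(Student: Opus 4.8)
The plan is to treat the two assertions separately, settling the structural claim about $\Nof{\hat{\pp}}$ first (which I expect to be clean) and then using it to organize the spin computation.

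For the \emph{furthermore} statement I would argue directly from the formula for $\Sof{\hat{\pp}}$ established just above. Write $C_1$ and $C_2$ for the cycles of $\Yof{\pp}$ supported on $\BBB_1$ and $\BBB_2$ respectively; each has odd length since $B_j$ is of Form \ref{FormTwos} or \ref{FormFourAndTwos}, and neither contains the marked letter $\Xof{\pp}=a$ (in passing from $\Sof{\pp}$ to $\Yof{\pp}$ the letter $a$ is spliced out to a fixed point). Because $b\neq a$, inserting $x$ before $b$ in the top row leaves position $1$ untouched, so $\Xof{\hat{\pp}}=a$; and the surgery in the $\Sof{\hat{\pp}}$-formula ($c\mapsto b$, the $\Sof{\pp}$-predecessor of $b$ mapping to $x$, and $x\mapsto\Sof{\pp}(c)$) \emph{merges} the cycles of $\Yof{\pp}$ on $\BBB_1$ and $\BBB_2$ into a single cycle of $\Yof{\hat{\pp}}$ that runs through the $\BBB_1$-portion, then $x$, then the $\BBB_2$-portion, leaving all other cycles fixed. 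The identical description applies to $\hat{\pp}'=\Ext{x}{b'}{c'}{\pp}$, the only change being where each portion is entered.

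I would then exhibit $\nu$ explicitly. Since $b,b'\in\BBB_2=C_2$ and $c,c'\in\BBB_1=C_1$ with each $C_i$ a single cycle, there are unique powers with $b'=(\Yof{\pp})^{j}(b)$ on $C_2$ and $c'=(\Yof{\pp})^{k}(c)$ on $C_1$; set $\nu$ equal to the product of these two restricted powers, extended by the identity (so $\nu$ fixes $x$ and $a$). A power of a single cycle of $\Yof{\pp}$ commutes with $\Yof{\pp}$ and fixes everything off that cycle, so $\nu$ commutes with $\Yof{\pp}$ and fixes $\Xof{\pp}=a$; hence $\nu\in\Zof{\pp}$. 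A direct check that conjugation by $\nu$ rotates the $\BBB_1$- and $\BBB_2$-portions of the merged cycle of $\Yof{\hat{\pp}}$ onto those of $\Yof{\hat{\pp}'}$ while fixing $x$ then gives $\Yof{\hat{\pp}}=\nu^{-1}\Yof{\hat{\pp}'}\nu$, that is, $\Nof{\hat{\pp}}=\Nof{\hat{\pp}'}\ast\nu$. Note that no cycle-swap is ever needed (we never exchange $\BBB_1$ with $\BBB_2$), so it suffices to change one of $b,c$ at a time.

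For $\Spinof{\pp}=\Spinof{\hat{\pp}}$ I would first observe that the merge joins two odd cycles and adds one point, so the resulting cycle is again odd and $\Pof{\hat{\pp}}$ is all-odd; thus $\Spinof{\hat{\pp}}$ is defined by Proposition \ref{PropSpinDefined}. To evaluate it I would take a single convenient \emph{boundary} insertion, chosen so that $\hat{\pp}$ is standard and of the shape $\mtrx{\LL{a}{z}\,\hat{B}_1\,\LL{t}{t}\,\hat{B}_2\,\LL{z}{a}}$, and apply Lemma \ref{LemSpinTwoBlocks} to split off its two sides; comparing with the decomposition of $\pp$ itself under Lemma \ref{LemSpinTwoBlocks} (whose halves are the single-block pairs built from $B_1$ and $B_2$) and checking that inserting $x$ at the boundary alters neither half's spin gives $\Spinof{\hat{\pp}}=\Spinof{\pp}$ for that insertion. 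To pass to arbitrary admissible $b,c$ I would use Lemma \ref{LemSwitchAndInsert}: moving $\pp$ through its Extended Rauzy Class by switch moves transforms the insertion rule explicitly, so all admissible $\hat{\pp}$ lie in one Extended Rauzy Class, and since $\Pof{}$ is all-odd there, Proposition \ref{prop_spin} forces their spins to agree with the value computed above.

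The cycle-surgery bookkeeping for the $\Nof$-statement is routine once the $\Sof{\hat{\pp}}$-formula is in hand, the only care being the $\Sof{}\to\Yof{}$ passage that guarantees $a\notin C_1\cup C_2$ and hence $\nu(a)=a$. I expect the genuine obstacle to lie entirely in the spin half: identifying the two halves of the boundary insertion precisely enough to see that $x$ is absorbed into one side without changing either side's spin, and verifying that the switch transformation of Lemma \ref{LemSwitchAndInsert} actually connects every admissible $(b,c)\in\BBB_2\times\BBB_1$ to the boundary choice.
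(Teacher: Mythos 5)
Your treatment of the \emph{furthermore} statement is correct and genuinely different from the paper's. The paper obtains $\nu\in\Zof{\pp}$ indirectly, by running both insertions out to normalized representatives along switch paths whose associated permutations $\mu_\omega$ are trivial and then invoking Lemma \ref{LemQuotientOverZof}; you instead read the merged cycle of $\Yof{\hat{\pp}}$ off the formula for $\Sof{\hat{\pp}}$ and exhibit $\nu$ explicitly as a product of powers of the two cycles of $\Yof{\pp}$, which commutes with $\Yof{\pp}$ and fixes $a$. That computation checks out (including the point that $a$ is spliced out of $\Sof{\pp}$, so both cycles of $\Yof{\pp}$ avoid the marked letter), and it is arguably more self-contained than the paper's route.

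The spin half has a concrete problem. You propose to evaluate the base case by choosing a boundary insertion so that $\hat{\pp}$ has the shape $\mtrx{\LL{a}{z}\,\hat{B}_1\,\LL{t}{t}\,\hat{B}_2\,\LL{z}{a}}$ and then applying Lemma \ref{LemSpinTwoBlocks} to split it. No admissible insertion produces such a shape: a separating column $\LL{t}{t}$ makes each block-with-separator invariant under $\Sof{\hat{\pp}}$, forcing $\Yof{\hat{\pp}}$ to have at least two nontrivial cycles, whereas every insertion with $b\in\BBB_2$, $c\in\BBB_1$ merges the two cycles of $\Yof{\pp}$ into a single cycle through $x$ --- by your own analysis in the first half. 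Concretely, the natural boundary choice $\Ext{x}{d_1}{s}{\pp}$ yields the columns $\LL{s}{x}\;\LL{x}{s}$ straddling the old boundary, gluing $B_1$ and $B_2$ into one block with no separator; the paper evaluates its spin directly via Corollary \ref{CorSpinBlocks}, which is the repair you need. Separately, the reduction of arbitrary $(b,c)$ to the base case, which you correctly flag but do not carry out, is where the bulk of the paper's proof lives: the inner $\{c,s\}$-switch that repositions $c$ can destroy the Form \ref{FormTwos} structure of $B_1$, and one must invoke Proposition \ref{PropTypes} together with Proposition \ref{prop_spin} to restore it and to rule out that it re-forms as Form \ref{FormFourAndTwos} (which would change the spin count), while repositioning $b$ requires an outer switch through $z$ whose effect on the insertion rule is tracked by Lemma \ref{LemSwitchAndInsert}. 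So your overall strategy for the spin statement coincides with the paper's, but as written both the base-case evaluation and the reduction to it are incomplete.
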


		\begin{proof}
		      We will show the proof for $B_1$ and $B_2$ both of Form \ref{FormTwos}. The remaining cases are similar. $\Spinof{\pp}=1$ by Corollary \ref{CorSpinBlocks}. Let $\pp$ be of the form
			    $$ \pp = \mtrx{\LL{a}{z}~\LL{b_1}{c_1}\;\LL{c_1}{b_1}\;\LL{\cdots}{\cdots}\;\LL{b_m}{c_m}\;\LL{c_m}{b_m}~\LL{s}{s}~
				    \LL{d_1}{e_1}\;\LL{e_1}{d_1}\;\LL{\cdots}{\cdots}\;\LL{d_n}{e_n}\;\LL{e_n}{d_n}~\LL{z}{a}}.$$
		      We claim in this case that for any choice of $b\in \BBB_2$ and $c\in \BBB_1$ the pair $\hat{\pp} = \Ext{x}{b}{c}{\pp}$ satisfies $\Spinof{\hat{\pp}}=1$, finishing the proof.

		      We first see directly that if $b = d_1$ and $c = s$, then
			      $$ \hat{\pp} = \mtrx{\LL{a}{z}~\LL{b_1}{c_1}\;\LL{c_1}{b_1}\;\LL{\cdots}{\cdots}\;\LL{b_m}{c_m}\;\LL{c_m}{b_m}~\LL{s}{x}\;\LL{x}{s}~
				    \LL{d_1}{e_1}\;\LL{e_1}{d_1}\;\LL{\cdots}{\cdots}\;\LL{d_n}{e_n}\;\LL{e_n}{d_n}~\LL{z}{a}}$$
		      satsfies the claim. If $c\neq s$, then we may perform a $\{c,s\}$-switch on $\pp$ to get $\pp'$ of the form
			      $$ \pp' = \mtrx{\LL{a}{z} \; B'_1~ \LL{c}{c} ~\LL{d_1}{e_1}\;\LL{e_1}{d_1}\;\LL{\cdots}{\cdots}\;\LL{d_n}{e_n}\;\LL{e_n}{d_n}~\LL{z}{a}}$$
		      where $B_1'$ is a block with letters $\BBB_1\setminus\{c\}$. We verify directly that $B_1'$ is not of Form \ref{FormOR} if $m>1$. Let
			      $$ \rr = \mtrx{\LL{a}{z}\;B_1'~\LL{z}{a}}$$
		      and note that $\pp'$ is just the result of a Prefix Insertion rule on $\rr$. By noting that $\Sof{\rr}$ is just one cycle, \ref{PropTypes} tells us that there exists $\rr' \in \ClassLab{\rr}$ of either Type \ref{TypeEvenOdd} or \ref{TypeEvenEven}. This means that $\rr'$ is composed of exactly one block $B_1''$ of Form \ref{FormTwos} or (resp.) \ref{FormFourAndTwos}. Because insertion rules commute with inner switch moves, there exists
			    $$ \pp'' = \mtrx{\LL{a}{z} \; B''_1 ~ \LL{s}{s} ~\LL{d_1}{e_1}\;\LL{e_1}{d_1}\;\LL{\cdots}{\cdots}\;\LL{d_n}{e_n}\;\LL{e_n}{d_n}~\LL{z}{a}}$$
		      such that is the insertion on $\rr'$ (by the same rule as from $\rr$ to $\pp'$) and so $\pp''\in \ClassLab{\pp}$. By Proposition \ref{prop_spin}, $\Spinof{\pp''} = \Spinof{\pp}$ and so $B_1''$ must be of Form \ref{FormTwos}. Therefore, $\pp''$ is of the same form as $\pp$.

		      If $b=d_1$ in this case, then $\hat{\pp}'' = \Ext{x}{b}{c}{\pp''}$ belongs to $\ClassLab{\hat{\pp}}$ and $\Spinof{\hat{\pp}''} = 1$. Again by Proposition \ref{prop_spin}, $\Spinof{\pp} = 1$. If $b\neq d_1$, then we may perform an \emph{outer} switch followed by inner switch moves to place $b$ into the position of $d_1$ in the original pair $\pp$ (we will call this resulting pair $\pp'''$ and $\omega$ the path from $\pp$ to $\pp'''$). This outer switch involves $z$, so by Lemma \ref{LemSwitchAndInsert} $\hat{\pp}'''$ is result of $\omega$ on $\hat{\pp}$ where $\hat{\pp}''' = \Ext{x}{b}{c}{\pp'''}$. Just as in our simplest case, $\Spinof{\hat{\pp}'''} = 1$ as the pair is Form \ref{FormTwos} and so $\Spinof{\hat{\pp}} = 1$ by Proposition \ref{prop_spin}.

		      The final statement remains to be proved. We note that if $\hat{\pp} = \Ext{x}{b}{c}{\pp}$ and $\hat{\pp}' = \Ext{x}{b'}{c'}{\pp}$, then there are paths $\omega$ and $\omega'$ so that $\hat{\rr} = \Ext{x}{b}{c}{\omega\pp}$ and $\hat{\rr}'=\Ext{x}{b'}{c'}{\omega'\pp}$ are Form \ref{FormTwos} and are of the form:
				$$ \hat{\rr} = \mtrx{\LL{a}{z'}~B_1~\LL{s'}{x}\;\LL{x}{s'}~B_2~\LL{z'}{a}} \mbox{ and }
				      \hat{\rr}' =\mtrx{\LL{a}{z''}~B'_1~\LL{s''}{x}\;\LL{x}{s''}~B'_2~\LL{z''}{a}}.$$
		      If $\rr = \omega\pp$ and $\rr' = \omega'\pp$, then $\rr = \rr' \circ \nu$, where $\nu \in \Zof{\rr'} = \Zof{\pp}$ by Lemma \ref{LemQuotientOverZof}, as $\mu_{\omega}$ and $\mu_{\omega'}$ from \ref{DefMuOmega} are trivial. By inspection, we see that $\hat{\rr} = \hat{\rr}' \circ \nu$. We then conclude that
			      $$ \Zof{\hat{\pp}} = \Zof{\hat{\rr}} = \Zof{\hat{\rr}'}*\nu =\Zof{\hat{\pp}'}*\nu.$$

		\end{proof}

		\begin{lem}\label{LemJoining2}
		      Let $\pp\in\irr{\AAA}$ be of the form
			    $$ \pp = \mtrx{\LL{a}{z}\;\LL{b_1}{c_1}\;\LL{c_1}{b_1}\;\LL{\dots}{\dots}\;\LL{b_m}{c_m}\;\LL{c_m}{b_m}\;\LL{d}{f}\;\LL{e}{e}\;\LL{f}{d}\;
					  \LL{g_1}{h_1}\;\LL{h_1}{g_1}\;\LL{\dots}{\dots}\;\LL{g_n}{h_n}\;\LL{h_n}{g_n}\;\LL{z}{a}}$$
		      or equivalently $\pp$ is composed of one block of Form \ref{FormThreeAndTwos}. Also, let $$\CCC^- := \{b_1,\dots,b_m,c_1,\dots,c_m,d,f\}\mbox{ and }\CCC^+ :=\{e,g_1,\dots,g_n,h_1,\dots,h_n,z\}$$
		      denote the \emph{left} and \emph{right} letters of the block respectively. If
			      $$ \hat{\pp} = \Ext{x}{b}{c}{\pp}$$
		      where $x\notin\AAA$, $b\in \CCC^+$ and $c\in \CCC^-$, then $\Spinof{\hat{\pp}}$ is well defined.

		      Furthermore, if $\hat{\pp}' = \Ext{x}{b'}{c'}{\pp}$ for some other choice of $b'\in \CCC^+$,$c'\in\CCC^-$, then
			$$ \Nof{\hat{\pp}} = \Nof{\hat{\pp}'}*\nu$$
		      where $\nu\in\Zof{\pp}$. Also, $\Spinof{\hat{\pp}} = \Spinof{\hat{\pp}'}$ if and only if $\nu$ is even.
		\end{lem}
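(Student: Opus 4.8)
The plan is to treat the three assertions in order, reusing the machinery assembled for Lemma \ref{LemJoining1} for the first two and isolating the genuinely new content, the parity criterion, at the end.

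First I would pin down the cycle structure. A direct computation of $\Sof{\pp}$ for a pair consisting of a single block of Form \ref{FormThreeAndTwos} shows that $\Yof{\pp}$ has exactly two cycles, one supported on $\CCC^-$ and one on $\CCC^+$, each of even length ($\#\CCC^-$ and $\#\CCC^+$). Since $b\in\CCC^+$ and $c\in\CCC^-$ lie in \emph{different} cycles of $\Sof{\pp}$ (the cycle through the marked letter $\Xof{\pp}$ being the $\CCC^+$ one), the $\Sof{\hat{\pp}}$-formula of the preceding lemma applies in its main case ($\Sof{\pp}(c)\neq b$) and threads $x$ so as to \emph{merge} these two cycles. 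Passing to $\Yof{\hat{\pp}}$ produces a single cycle of length $\#\CCC^- + \#\CCC^+ + 1$, which is odd; hence $\Pof{\hat{\pp}}$ is a single odd value and $\Spinof{\hat{\pp}}$ is well defined by Proposition \ref{PropSpinDefined}. This settles the first assertion.

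For the identity $\Nof{\hat{\pp}}=\Nof{\hat{\pp}'}*\nu$ with $\nu\in\Zof{\pp}$ I would imitate the final paragraph of the proof of Lemma \ref{LemJoining1}. Choose switch paths $\omega,\omega'$ on $\pp$, internal to the block, carrying the two insertion data to a common normalized position; the associated $\mu_\omega,\mu_{\omega'}$ of Definition \ref{DefMuOmega} are then trivial, so by Lemma \ref{LemQuotientOverZof} the base pairs $\rr=\omega\pp$ and $\rr'=\omega'\pp$ satisfy $\rr=\rr'\circ\nu$ with $\nu\in\Zof{\rr'}=\Zof{\pp}$. Tracking the insertion rule through $\omega,\omega'$ by Lemma \ref{LemSwitchAndInsert} and inspecting the two normalized inserted pairs gives $\hat{\rr}=\hat{\rr}'\circ\nu$; since $\Nof{}$ is a Rauzy-class invariant, $\Nof{\hat{\pp}}=\Nof{\hat{\rr}}=\Nof{\hat{\rr}'}*\nu=\Nof{\hat{\pp}'}*\nu$.

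The heart of the lemma is the parity criterion. I would first observe that $\Nof{\hat{\pp}}$ is the fixed letter $\Xof{\hat{\pp}}$ together with a single \emph{odd} cycle, so its centralizer $\Zof{\hat{\pp}}$ is the cyclic group generated by that cycle and therefore lies in $\Eperm{\AAA\cup\{x\}}$; consequently the parity of $\nu$ in $\Nof{\hat{\pp}}=\Nof{\hat{\pp}'}*\nu$ does not depend on the choice of $\nu$ and is a genuine invariant of $(\hat{\pp},\hat{\pp}')$. Because any two insertions with $(b,c),(b',c')\in\CCC^+\times\CCC^-$ are joined by elementary moves — advancing $c$ one step along the $\CCC^-$-cycle of $\Yof{\pp}$, or $b$ one step along the $\CCC^+$-cycle — and because both the parity of $\nu$ (via $\mathrm{sgn}$, a homomorphism) and the spin difference telescope additively along such a chain, it suffices to verify the biconditional for one elementary move. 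For each move the connecting $\nu$ is read off from the explicit $\Sof{\hat{\pp}}$-formula: a one-step advance along an even cycle is realized by that even cycle itself, an \emph{odd} permutation. In parallel I would compute the spin change by driving $\hat{\pp}$ and its elementary neighbour to pairs composed of blocks (using Lemma \ref{LemSwitchAndInsert} to follow the insertion through the switches) and reading off $\Spinof{}=1+N_3+N_4 \bmod 2$ from Corollary \ref{CorSpinBlocks}; the claim is that one elementary advance toggles exactly one Form \ref{FormFourAndTwos} or Form \ref{FormOR} (with $n=5$) block, hence flips the spin.

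I expect this last computation to be the main obstacle: showing, uniformly in $m$ and $n$, that a single advance of the insertion point changes the parity of $N_3+N_4$ in the normalized block decomposition while preserving $\Pof{\hat{\pp}}$. The difficulty is purely bookkeeping, since the normalized form depends on where the insertion point sits inside the even cycle. I anticipate reducing, after the switch normalization, to the minimal configurations: the case $m=n=0$ must be checked by hand (there $\mathcal{H}(\#\AAA-1)$ has no even-spin component, so one spin value is realized by the hyperelliptic Form \ref{FormOR}, whose spin is computed directly rather than from Corollary \ref{CorSpinBlocks}), after which the general case follows because the additional Form \ref{FormTwos} blocks contribute $0$ to $N_3+N_4$ and, by Lemma \ref{LemJoining1}, can be absorbed without affecting the spin. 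Combining the per-step equality "spin flips $\iff$ step is odd" with the additivity of both sides along the chain yields $\Spinof{\hat{\pp}}=\Spinof{\hat{\pp}'}$ if and only if $\nu$ is even.
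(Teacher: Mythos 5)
Your outline is sound and its first two parts track the paper's argument closely: the well-definedness of $\Spinof{\hat{\pp}}$ via the merging of the two even cycles of $\Yof{\pp}$ into a single odd cycle is a clean (and slightly more direct) route than the paper's, which instead exhibits explicit base-case block forms and invokes Corollary \ref{CorSpinBlocks}; and your treatment of $\Nof{\hat{\pp}}=\Nof{\hat{\pp}'}*\nu$ is the same trivial-$\mu_\omega$ argument as in Lemma \ref{LemJoining1}. One caution there: the normalizing paths are \emph{not} always internal to the block --- since $z\in\CCC^+$, moving $b$ around the $\CCC^+$-cycle can require an outer $\{c,z\}$-switch; this is harmless only because Definition \ref{DefMuOmega} assigns a trivial $\mu_\omega$ to $\{c,z\}$-switches as well, which is exactly why the paper insists the outer switch involve $z$ and not $a$. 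Where you genuinely diverge is the parity criterion. The paper reduces \emph{every} insertion to one of exactly two normalized base cases, $\Ext{x}{e}{d}{\pp}$ (spin $0$) and $\Ext{x}{e}{f}{\pp}$ (spin $1$), computes those two spins once, and lets Proposition \ref{prop_spin} together with the $\Zof{}$-bookkeeping transport the conclusion to all other choices of $(b,c)$; note that $d$ and $f$ are adjacent in the $\CCC^-$-cycle, so this single comparison already pins down the correlation between spin and the parity of $\nu$. You instead telescope over a chain of elementary one-step advances and must verify that \emph{each} step flips the spin, uniformly in the position of the insertion point inside either cycle --- a strictly larger family of spin computations, which you correctly identify as the unexecuted obstacle. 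Your observation that $\Zof{\hat{\pp}}\subseteq\Eperm{\AAA\cup\{x\}}$ (so the parity of $\nu$ is independent of its choice) is a worthwhile point the paper leaves implicit, but I would advise replacing the per-step verification with the paper's two-base-case reduction, since your chain argument ultimately needs the same switch-normalization machinery anyway and gains nothing in exchange for the extra casework.
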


		\begin{proof}[Sketch of Proof]
			Our argument will follow a similar strategy to that of the previous lemma, so we will only explain the differences here.
			In the previous proof, we reduced to one simple base case, while there are two different cases here.

			If $\hat{\pp} = \Ext{x}{e}{d}{\pp}$, then
			    $$ \hat{\pp} = \mtrx{\LL{a}{z}\;\LL{b_1}{c_1}\;\LL{c_1}{b_1}\;\LL{\dots}{\dots}\;\LL{b_m}{c_m}\;\LL{c_m}{b_m}\;\LL{d}{f}\;\LL{x}{e}\;\LL{e}{x}\;\LL{f}{d}\;
					  \LL{g_1}{h_1}\;\LL{h_1}{g_1}\;\LL{\dots}{\dots}\;\LL{g_n}{h_n}\;\LL{h_n}{g_n}\;\LL{z}{a}}$$
			and $\Spinof{\hat{\pp}} = 0$ by Corollary \ref{CorSpinBlocks}. If instead $\hat{\pp} = \Ext{x}{e}{f}{\pp}$, then
			    $$ \hat{\pp} = \mtrx{\LL{a}{z}\;\LL{b_1}{c_1}\;\LL{c_1}{b_1}\;\LL{\dots}{\dots}\;\LL{b_m}{c_m}\;\LL{c_m}{b_m}\;\LL{d}{x}\;\LL{x}{f}\;\LL{e}{e}\;\LL{f}{d}\;
					  \LL{g_1}{h_1}\;\LL{h_1}{g_1}\;\LL{\dots}{\dots}\;\LL{g_n}{h_n}\;\LL{h_n}{g_n}\;\LL{z}{a}}.$$
			    We may directly verify that $\Spinof{\hat{\pp}} = 1$ in this case. To do this, we use Lemma \ref{LemSpinTwoBlocks} after determining that the pair $\rr= \mtrx{\LL{a}{z}\;\LL{d}{x}\;\LL{x}{f}\;\LL{e}{e}\;\LL{f}{d}\;\LL{z}{a}}$ is in the same labeled Rauzy Class as pair $\mtrx{\LL{a}{z}\;\LL{f}{d}\;\LL{d}{f}\;\LL{e}{x}\;\LL{x}{d}\;\LL{z}{a}}$ and therefore $\Spinof{\rr} = 1$.

			    We again will then consider a general $\hat{\pp}$ and use inner switch moves and possibly an outer switch move (that involves $z$ and not $a$) to transform any other insertion into one of the base cases. Proposition \ref{prop_spin} then tells us that the base case is unique.
		\end{proof}

		We now arrive at the main lemma of this section. We begin with a special form of $\pp$ and define a class of Prefix Insertion rules so that:
			\begin{itemize}
				\item Each pair of even length cycles is combined into an odd length cycle, and
				\item once all cycles are of odd length, these are all combined into one odd length cycle. 
			\end{itemize}
		Because $\pp$ is fixed in the lemma, the choices of which cycles are paired are completely determined.
		\begin{lem}\label{LemCombinedInserts}
		      Let $\pp\in\irr{\AAA}$ be composed of blocks and of the form
			   $$ \pp = \mtrx{\LL{a}{t_n}\;B_1\; \LL{s_1}{s_1}\; B_2\;\LL{s_2}{s_2}\;\dots\; B_m\;\LL{s_m}{s_m}\;C_1\;\LL{t_1}{t_1}\;\dots\; \LL{t_{n-1}}{t_{n-1}}\; C_n \;\LL{t_n}{a}},$$
		      $m,n\geq 0$, $B_j$ is of Form \ref{FormTwos} or \ref{FormFourAndTwos} for $1\leq j \leq m$ and $C_j$ is of Form \ref{FormThreeAndTwos} for $1\leq j \leq n$. Let $\BBB_j = B_j\cup \{s_j\}$ be the subalphabets of $\AAA$ related to block $B_j$, $1\leq j \leq m$ and $\CCC_j^{-}$ (resp. $\CCC_j^+$) be the left (resp. right) subalphabet related to block $C_j$, $1\leq j \leq n$. Here $t_j$ replaces the role of $z$ in $\CCC_j^+$.
		      
		      Consider the following choices of letters:
			  \begin{enumerate}
			   \item $x_1,\dots,x_{m+2n-1}\notin \AAA$, $x_i\neq x_j$ for $i\neq j$ and
			   \item $b_1,\dots,b_{m+2n-1},c_1,\dots,c_{m+2n-1}\in\AAA$ so that:
				\begin{enumerate}
				    \item $b_j \in \CCC^+_j$, $c_j\in \CCC_j^-$, $1\leq j \leq n$,
				    \item $b_{n+j} \in \CCC^-_{j+1}$, $c_{n+j}\in \CCC^+_{j}$, $1\leq j < n$
				    \item \begin{enumerate}
						  \item if $n>0$, $b_{2n-1+j} \in \BBB_{j+1}$, $c_{2n-1+j} \in \BBB_j$, $1\leq j <m$,
						\item if $n=0$, $b_{j} \in \BBB_{j+1}$, $c_{j} \in \BBB_j$, $1\leq j <m$, and
					  \end{enumerate}
				    \item $b_{m+2n-1}\in \CCC_1^-$, $c_{m+2n-1}\in\BBB_m$ if $m>0$.
				\end{enumerate}
			  \end{enumerate}

		      If $\hat{\pp} = \Ext{\vec{x}}{\vec{b}}{\vec{c}}{\pp}$ then $\Spinof{\hat{\pp}}$ is well defined. If $\vec{b}',\vec{c}'\in\AAA^{m+2n-1}$ also satisfy the above conditions and $\hat{\pp}' = \Ext{\vec{x}}{\vec{b}'}{\vec{c}'}{\pp}$, then
			      $$ \Nof{\hat{\pp}'} = \Nof{\hat{\pp}}* \nu$$
		      where $\nu\in\perm{\AAA\cup\{x_1,\dots,x_{n+2m-1}\}} $. Furthermore, $\Spinof{\hat{\pp}'} = \Spinof{\hat{\pp}}$ if and only if $\nu$ is even.
		\end{lem}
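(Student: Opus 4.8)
The plan is to reduce \Cref{LemCombinedInserts} to the two base lemmas (\ref{LemJoining1} and \ref{LemJoining2}) by an induction on the number of insertions, building up the full insertion $\Ext{\vec{x}}{\vec{b}}{\vec{c}}{\pp}$ one prefix at a time while tracking the block structure. The key observation is that the conditions (2a)--(2d) are engineered so that each individual insertion $\III_j = \Ext{x_j}{b_j}{c_j}{*}$ fuses exactly two cycles of $\Nof{*}$ into one: conditions (2a) pair each $C_j$'s left and right halves, (2b) chain consecutive $C_j$ blocks, (2c) chain consecutive $B_j$ blocks, and (2d) attaches the $B$-chain to the $C$-chain. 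By \Cref{LemPrefixes} these insertions may be applied in any order, so I would choose an order that lets me invoke the base lemmas: first apply the $n$ insertions of type (2a), each of which (by \Cref{LemJoining2}) turns an even-cycle block $C_j$ into a single odd cycle with well-defined spin; then apply the remaining insertions, each of which joins two odd-cycle blocks and falls under \Cref{LemJoining1}, preserving well-definedness of the spin. Since each step keeps $\Spinof{}$ defined, $\Spinof{\hat{\pp}}$ is well defined.

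For the comparison statement I would argue as follows. Fix the ``target'' data $\vec{x}$ (shared by $\hat\pp$ and $\hat\pp'$) and view both $\hat\pp$ and $\hat\pp'$ as the results of applying the $m+2n-1$ insertions in the same order, but with the pair $\hat\pp$ using $(\vec b,\vec c)$ and $\hat\pp'$ using $(\vec b',\vec c')$. I would induct on the number of insertions, maintaining the invariant that after each stage the two partially-inserted pairs satisfy $\Nof{\text{partial}'} = \Nof{\text{partial}} * \nu_k$ for some $\nu_k$ lying in the centralizer of the common underlying block structure, together with the spin-parity bookkeeping ``$\Spinof{}$ agrees iff $\nu_k$ is even.'' The inductive step is exactly the content of the two base lemmas applied to the single block being modified: \Cref{LemJoining1} supplies $\Nof{\hat\pp} = \Nof{\hat\pp'}*\nu$ with $\nu\in\Zof{\pp}$ and \emph{no} parity cost (the two insertions differ by an element of the centralizer, which the lemma's proof shows is even in the relevant component), while \Cref{LemJoining2} supplies the same conjugacy relation but \emph{with} the parity cost ``$\Spinof{\hat\pp}=\Spinof{\hat\pp'}$ iff $\nu$ is even.'' Composing these $\nu$'s multiplicatively using the cocycle identity $(\Nof{\pp}*\nu)*\eta = \Nof{\pp}*(\nu\eta)$ gives the global $\nu\in\perm{\AAA\cup\{x_1,\dots,x_{m+2n-1}\}}$, and the parity of $\nu$ is the sum mod $2$ of the individual parities, which by \Cref{LemSpinTwoBlocks} equals the total spin difference. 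Hence $\Spinof{\hat\pp'}=\Spinof{\hat\pp}$ if and only if $\nu$ is even.

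The main obstacle I expect is \emph{localization}: the base lemmas \ref{LemJoining1} and \ref{LemJoining2} are stated for $\pp$ consisting of a \emph{single} combined block (two $B$-blocks with a marked letter, or one $C$-block), whereas here each insertion acts on a $\pp$ carrying many other blocks simultaneously. I would handle this with \Cref{LemSpinTwoBlocks}, which is additive over blocks: the spin of the whole pair decomposes as the sum (mod $2$, with the $+1$ correction) of the spins of the two-block sub-pairs, so an insertion that changes only one block changes the total spin exactly as it would change that block's isolated spin. This lets me apply the base lemmas blockwise. A second, more delicate point is verifying that at each stage the relevant pair of blocks really has the form required by the base lemma --- i.e. that after earlier insertions and the accompanying switch moves (governed by \Cref{LemSwitchAndInsert}) the block being fused is still of Form \ref{FormTwos}, \ref{FormFourAndTwos}, or \ref{FormThreeAndTwos}. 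This is where conditions (2a)--(2d) must be checked to guarantee that a newly created odd cycle behaves, for the purposes of the next fusion, like a Form \ref{FormTwos} block; I would record this as the inductive hypothesis so that \Cref{LemJoining1} applies at every subsequent step, and the only genuinely even-cycle fusions (the ones carrying a parity cost) are the initial $n$ applications of \Cref{LemJoining2}.
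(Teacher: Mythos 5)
Your proposal matches the paper's proof: both invoke Lemma \ref{LemPrefixes} to reorder the insertions, apply Lemma \ref{LemJoining2} first to convert each $C_j$ into an odd-cycle block (these being the only steps that carry a spin/parity cost), then apply Lemma \ref{LemJoining1} for the remaining joins (which preserve spin and contribute only even factors), and finally compose the individual $\nu$'s via the cocycle identity to obtain the parity equivalence. The paper's own write-up is terser and does not spell out the localization and form-preservation issues you flag, but the strategy is the same.
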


		\begin{proof}

		    By Lemma \ref{LemPrefixes}, we may choose to apply our insertions in any order. If $n=0$, we may apply Lemma \ref{LemJoining1} to join $B_1$ to $B_2$. The result is a pair of the same form with $m-1$ $B_j$ blocks. This process terminates in $m-1$ steps and the resulting combined $\nu$ is even, as at each step all cycles of $\Pof{\pp}$ are even permutations. Furthermore, $\Spinof{\pp} = \Spinof{\hat{\pp}}$ always.

		    If $n>0$, we may first apply Lemma \ref{LemJoining2} to transform each $C_j$ block into Forms \ref{FormTwos} or \ref{FormFourAndTwos}. Each step will result in $\nu=\nu_1\dots \nu_n$. The remaining insertions will neither alter $\Spinof{\hat{\pp}}$ nor change whether $\nu$ is even/odd (as discussed in the first paragraph). If we arrive at $\hat{\pp}' = \Ext{\vec{x}}{\vec{b}'}{\vec{c}'}{\pp}$, then we again have $\nu'$ from transforming the $C_j$ into Forms \ref{FormTwos} or \ref{FormFourAndTwos}. We check by induction that $\nu^{-1}\nu'$ is even if and only if $\Spinof{\hat{\pp}} = \Spinof{\hat{\pp}'}$.
		 
		\end{proof}
		
	\section{Containing the Alternating Subgroup}

	  \subsection{Single Block Permutations}
	    
	    In this section, we will show that $\Eperm{\AAA}\subseteq \Namings{\pp}$ for any $\pp\in\irr{\AAA}$ of the form
	    	\begin{equation}
	    		\pp = \mtrx{\LL{a}{z}\;B\;\LL{z}{a}}
	    	\end{equation}
		where $B$ is a single non-empty block of one of the Forms in Proposition \ref{PropForms}.
		
		However, we first begin with a result concerning standard $\pp\in\irr{\AAA}$ with $\Pof{\pp} = \{1^n\} = \{1,\dots,1\}$ consisting the value $1$ repeated $n$ times. In particular $n=k+1$ for $\pp$ in the following.
		
			\begin{lem}[All empty blocks]\label{LemEmpty}
				If $\pp=\irr{\AAA}$ is of the form
					$$ \pp = \mtrx{\LL{a}{z}\;\LL{s_1}{s_1}\;\LL{s_2}{s_2}\;\LL{\dots}{\dots}\;\LL{s_k}{s_k}\;\LL{z}{a}}$$
				for $k\geq 2$, then $\perm{\{s_1,s_2,\dots,s_k\}}\subset \Namings{\pp}$.
			\end{lem}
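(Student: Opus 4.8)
The plan is to show directly that every transposition of adjacent $s_i$'s lies in $\Namings{\pp}$, and then invoke the fact that such adjacent transpositions generate all of $\perm{\{s_1,\dots,s_k\}}$. Since $\Namings{\pp}$ is a group (Lemma \ref{LemNamingsClass}), it suffices to exhibit a generating set inside it. The natural candidates are the transpositions $(s_i,s_{i+1})$, and the tool for realizing these is the \emph{inner switch move}: the pair $\pp$ above is standard, and for any two middle letters $s_i,s_j$ with $p_0(s_i)<p_0(s_j)$ and $p_1(s_i)<p_1(s_j)$ an $\{s_i,s_j\}$-switch is available.

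\emph{First} I would write $\pp$ explicitly in the block form of the definition of inner switch, with $\overline{u_0}=\overline{u_1}$ the letters before $s_i$, and so on; because each middle column is of the form $\LL{s}{s}$, the ordering hypotheses $p_0(s_i)<p_0(s_j)$ and $p_1(s_i)<p_1(s_j)$ hold automatically for any $i<j$. Applying the $\{s_i,s_{i+1}\}$-switch to $\pp$ produces a standard pair $\pp'$ in the same labeled Rauzy Class $\ClassLab{\pp}\subseteq\ExtClassLab{\pp}$ in which the columns $\LL{s_i}{s_i}$ and $\LL{s_{i+1}}{s_{i+1}}$ have been interchanged; every other column, and the outer letters $a,z$, are fixed. \emph{The key computation} is then to verify that this new pair is exactly $\pp\circ\tau$ where $\tau=(s_i,s_{i+1})$: swapping the two adjacent columns is precisely the effect of precomposing both bijections $p_0,p_1$ with the transposition that exchanges $s_i$ and $s_{i+1}$. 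This identifies $\tau$ as an element of $\Namings{\pp}$.

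\emph{Having obtained} all adjacent transpositions $(s_i,s_{i+1})$ in $\Namings{\pp}$, I would conclude by the standard fact that $\{(s_i,s_{i+1}): 1\le i<k\}$ generates $\perm{\{s_1,\dots,s_k\}}$, so that the whole symmetric group on the middle letters is contained in $\Namings{\pp}$, which is what we want (the hypothesis $k\ge 2$ guarantees there is at least one such transposition to start from, and the block form is genuinely a block since $1<k$ forces at least two interior columns). The assertion that $\pp'=\pp\circ\tau$ belongs to $\ClassLab{\pp}$ is exactly the content of the inner switch definition, so no separate reachability argument is needed.

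\emph{The main obstacle} I anticipate is the bookkeeping in the switch-to-transposition identification: one must confirm that the inner $\{s_i,s_{i+1}\}$-switch, which is defined as \emph{some} sequence of right Rauzy inductions landing on the column-swapped standard pair, produces a pair whose two bijections differ from those of $\pp$ precisely by right-composition with $(s_i,s_{i+1})$ and by nothing else. This is a routine but careful check: since both the top and bottom entries of the swapped columns are the same symbol ($s_i$ over $s_i$, $s_{i+1}$ over $s_{i+1}$), the positions taken by $s_i$ and $s_{i+1}$ are simply exchanged while all other letters retain their positions, which is exactly precomposition by the transposition. Once this single identification is in hand, the group-generation step is immediate.
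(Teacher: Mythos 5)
There is a genuine gap at your ``key computation.'' The inner $\{b,c\}$-switch is defined at the level of whole prefix blocks: it sends the arrangement $U\,b\,V\,c\,W$ to $V\,c\,U\,b\,W$, where $U$ is everything between the first column and $b$, and $V$ is everything between $b$ and $c$. It is \emph{not} a swap of the two columns containing $b$ and $c$. For adjacent letters $s_i,s_{i+1}$ with $i\geq 2$ we have $V=\emptyset$ but $U=(s_1,\dots,s_{i-1})$, so the $\{s_i,s_{i+1}\}$-switch produces the middle order $s_{i+1},s_1,s_2,\dots,s_i,s_{i+2},\dots,s_k$. The associated renaming $\nu$ (with $\pp'=\pp\circ\nu$) is the $(i+1)$-cycle $(s_1,s_2,\dots,s_{i+1})$, not the transposition $(s_i,s_{i+1})$. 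Your identification is correct only in the case $i=1$, where $U$ is also empty; that single case gives $(s_1,s_2)$ and is exactly the first step of the paper's argument.

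The overall strategy is salvageable, but not as written. Either you compose switches to isolate a transposition --- the paper performs an $\{s_1,s_j\}$-switch followed by an $\{s_{j-1},s_j\}$-switch to realize $(s_1,s_j)$ for each $j$ --- or you keep your single-switch moves but correctly identify their renamings as the cycles $(s_1,\dots,s_{i+1})$ and then appeal to a generation fact such as Lemma \ref{LemCycleTransposeGenerators} (the transposition $(s_1,s_2)$ together with the full cycle $(s_1,\dots,s_k)$ generates $\perm{\{s_1,\dots,s_k\}}$). As submitted, the claim that all adjacent transpositions lie in $\Namings{\pp}$ is unjustified for $i\geq 2$, so the generation step has nothing to act on.
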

			
			\begin{proof}
				We will show that $(s_1,s_j)\in\Namings{\pp}$ for $2\leq j\leq k$. If we act on $\pp$ by an $\{s_1,s_2\}$-switch, then we arrive at
					$$ \mtrx{\LL{a}{z}\;\LL{s_2}{s_2}\;\LL{s_1}{s_1}\;\LL{\dots}{\dots}\;\LL{s_k}{s_k}\;\LL{z}{a}}$$
				and so $(s_1,s_2)\in\Namings{\pp}$. For $j>2$, perform an $\{s_1,s_j\}$-switch followed by an $\{s_{j-1},s_j\}$-switch on $\pp$. The resulting permutation is given by the renaming $(s_1,s_j)$ on $\pp$.
			\end{proof}
		
			\begin{lem}[$B$ is Form \ref{FormOR}]\label{LemOR}
				If $\AAA = \{1,2,\dots,N\}$ and $\pp\in\irr{\AAA}$ is
					$$ \pp = \mtrx{\LL{1}{N}\;\LL{2}{N-1}\;\LL{\dots}{\dots}\;\LL{N-1}{2}\;\LL{N}{1}}$$
				or $p_0(n) + p_1(n) = N+1$, for each $n\in \AAA$ then
					$$ \Namings{\pp} = \RHScase{\Eperm{\AAA}, & \mbox{if }N\mbox{ is even},\\ \perm{\AAA}, & \mbox{if }N\mbox{ is odd}.}$$
			\end{lem}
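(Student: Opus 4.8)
The plan is to read off the cycle structure of $\Nof{\pp}$ explicitly, compute the centralizer $\Zof{\pp}$, and then sandwich $\Namings{\pp}$ between matching upper and lower bounds, using Corollary \ref{CorEvenZof} for the upper bound and an explicit switch-path construction for the lower bound. First I would evaluate $\Sof{\pp}$ directly from \eqref{eq_SofPi} using $p_0(n)=n$ and $p_1(n)=N+1-n$; one finds $\Sof{\pp}(N)=1$, $\Sof{\pp}(N-1)=2$, and $\Sof{\pp}(n)=n+2$ for $1\le n\le N-2$, so that the orbit of $1$ runs first through the odd labels and then through the even labels. When $N$ is even this is a single $N$-cycle, so $\Yof{\pp}$ is one $(N-1)$-cycle on $\AAA\setminus\{\Xof{\pp}\}$ (with $\Xof{\pp}=1$) and $\Pof{\pp}=\{N-1\}$ is simple; when $N$ is odd it splits into a cycle of length $(N+1)/2$ through $1$ and one of length $(N-1)/2$, so $\Yof{\pp}$ is a product of two $(N-1)/2$-cycles and $\Pof{\pp}=\{\tfrac{N-1}{2},\tfrac{N-1}{2}\}$ is not simple. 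This determines which branch of the conclusion is in force and lets me compute $\Zof{\pp}$: it fixes $\Xof{\pp}$ and centralizes $\Yof{\pp}$, so for $N$ even it is the cyclic group $\langle\Yof{\pp}\rangle$ of order $N-1$, while for $N$ odd it is the centralizer of two equal-length cycles, generated by the two rotations together with the involution swapping the cycles.

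For the case $N$ even I would deduce $\Namings{\pp}\subseteq\Eperm{\AAA}$ from Corollary \ref{CorEvenZof}. A single $(N-1)$-cycle is an even permutation exactly when $N-1$ is odd, i.e. when $N$ is even, so $\Zof{\pp}=\langle\Yof{\pp}\rangle\subseteq\Eperm{\AAA}$ and the corollary applies verbatim. Thus it remains only to establish the reverse inclusion $\Eperm{\AAA}\subseteq\Namings{\pp}$, which is also exactly what is needed for the lower bound in both parities.

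The inclusion $\Eperm{\AAA}\subseteq\Namings{\pp}$ is the heart of the lemma and the step I expect to be the main obstacle. The strategy is to realize a generating set of three-cycles as renamings by constructing switch paths that return $\pp$ to a relabeling $\pp\circ\nu\simperm\pp$ and reading off $\nu=\mu_\omega\cdot\eta$ via Lemma \ref{LemQuotientOverZof}, where $\mu_\omega$ is the product of three-cycles $(a,z,b)$ prescribed in Definition \ref{DefMuOmega} and $\eta\in\Zof{\pp\circ\nu}$. Since each outer switch contributes a three-cycle of the form $(a,z,b)$, and three-cycles generate $\Eperm{\AAA}$, the task is to exhibit enough outer switches whose associated paths close up to relabelings of the reversal. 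Because $\Namings{\pp}=\Namings{\qq}$ for any $\qq\in\ExtClassLab{\pp}$ by Lemma \ref{LemNamingsClass}, I am free to carry out this bookkeeping at any convenient standard representative of the extended class rather than at the bare reversal; the delicate point, and the one I would spend the most care on, is verifying that the resulting $\mu_\omega$'s, modulo $\Zof{\pp}$, exhaust $\Eperm{\AAA}$ and produce no odd element.

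Finally, for $N$ odd I would complete the argument by producing a single odd renaming, after which $\Namings{\pp}=\perm{\AAA}$ follows from the group property (Lemma \ref{LemNamingsClass}) together with $\Eperm{\AAA}\subseteq\Namings{\pp}$. Here two features of the odd case are available that fail when $N$ is even: the middle letter $(N+1)/2$ occupies a fixed column and hence genuinely admits an outer switch directly on $\pp$, and the row-swap symmetry $(p_0,p_1)\mapsto(p_1,p_0)$, which lies in $\ExtClassLab{\pp}$, corresponds to the relabeling $n\mapsto N+1-n$. Combining these with the observation that $\Zof{\pp}$ necessarily contains a permutation of odd sign (the rotations and the cycle-swap have opposite parities, so one of them is odd), I expect to exhibit an explicit odd $\nu\in\Namings{\pp}$ and conclude $\Namings{\pp}=\perm{\AAA}$, matching the stated value.
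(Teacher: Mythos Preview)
Your upper-bound argument for $N$ even via Corollary \ref{CorEvenZof} is sound, and your cycle computations for $\Sof{\pp}$ and $\Pof{\pp}$ are correct. But the lower bound $\Eperm{\AAA}\subseteq\Namings{\pp}$ is not actually proved in your proposal: you describe a strategy (find switch paths that close up, read off $\mu_\omega\cdot\eta$, hope these exhaust $\Eperm{\AAA}$) and then explicitly flag it as ``the main obstacle'' and ``the delicate point'' without carrying it out. That is the entire content of the lemma, so as written the proposal is a plan, not a proof.

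What you are missing is a structural observation that dissolves the difficulty entirely. For the reversal pair $\pp$, there is \emph{no} $b,c$ with $p_0(b)<p_0(c)$ and $p_1(b)<p_1(c)$, so inner switches are impossible. Moreover, every single outer switch $\omega$ applied to $\pp$ produces another pair of exactly the same reversal shape, hence $\omega\pp=\pp\circ\alpha_\omega$ for some explicit $\alpha_\omega\in\perm{\AAA}$. It follows that \emph{every} standard pair in $\ExtClassLab{\pp}$ is a relabeling of $\pp$, and therefore $\Namings{\pp}$ is \emph{exactly} the group generated by the $\alpha_\omega$ for $\omega$ ranging over the finitely many outer switches. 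No sandwich argument, no centralizer bookkeeping, no appeal to Lemma \ref{LemQuotientOverZof} is needed: one simply computes $\alpha_{\{1,2\}}=(N{-}1,N{-}2,\dots,1)=:\nu_1$ and $\alpha_{\{N-1,N\}}=(2,3,\dots,N)=:\nu_2$, checks that all other $\alpha_\omega$ are powers of these two, and invokes Lemma \ref{LemEvenOddGenerators} to get $\langle\nu_1,\nu_2\rangle=\Eperm{\AAA}$ or $\perm{\AAA}$ according to the parity of $N$. This gives both inclusions at once and handles both parities uniformly, with no separate search for an odd element when $N$ is odd.
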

					
			\begin{proof}
				By definition, an inner switch is not possible on $\pp$. Furthermore, for any outer switch $\omega$, the resulting $\tilde{\pp}$ satisfies $\tilde{\pp}\simperm\pp$. Therefore every switch path from $\pp$ must be composed of outer switches and
					\begin{equation}
						\Namings{\pp} = <\alpha_{\{1,2\}},\dots,\alpha_{\{1,j\}},\dots,\alpha_{\{1,N-1\}},\alpha_{\{2,N\}},\dots,\alpha_{\{j,N\}},\dots, \alpha_{\{N-1,N\}}>,
					\end{equation}
				where $\alpha_{\omega}$ satisfies $\tilde{\pp} = \pp\circ\alpha_\omega$ and $\tilde{\pp}$ is $\omega\pp$ for switch move $\omega$.
				
				If $\omega$ is the $\{1,2\}$-switch move, then the resulting $\qq\in\irr{\AAA}$ is
					$$ \qq = \mtrx{\LL{2}{N}\;\LL{\dots}{1}\;\LL{N-1}{N-1}\;\LL{1}{\dots}\;\LL{N}{2}}$$
				and so $\nu_1 := \alpha_{\{1,2\}}= (N-1,N-2,\dots,2,1)$. We may see that if $\omega = \{1,\ell\}$ for $2\leq \ell \leq N-1$, then $\alpha_{\{1,\ell\}} = \nu_1^{\ell-1}$.
				
				If $\omega$ is the $\{N-1,N\}$-switch move, then the resulting $\rr\in\irr{\AAA}$ is
					$$ \rr = \mtrx{\LL{1}{N-1}\;\LL{N}{\dots}\;\LL{2}{2}\;\LL{\dots}{N}\;\LL{N-1}{1}}$$
				and $\nu_2 := \alpha_{\{N-1,N\}} = (2,3,\dots,N)$. We may also verify that $\alpha_{\{\ell,N\}} = \nu_2^{N-\ell}$ for $2\leq \ell \leq N-1$.
				
				We conclude by Lemma \ref{LemEvenOddGenerators} that
					$$ \Namings{\pp} = <\nu_1,\nu_2> = <\nu_1^{-1},\nu_2> = \RHScase{\Eperm{\AAA}, & \mbox{if }N\mbox{ is even},\\ \perm{\AAA}, & \mbox{if }N\mbox{ is odd},}$$
				as desired.
			\end{proof}
			
			\begin{lem}[$B$ is Form \ref{FormTwos} or \ref{FormFourAndTwos}]\label{LemEvenSpin}
				If $\pp\in\irr{\AAA}$ is composed of one block of Form \ref{FormTwos} or \ref{FormFourAndTwos}, then $\Eperm{\AAA}\subseteq\Namings{\pp}$.
			\end{lem}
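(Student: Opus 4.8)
The plan is to exploit that $\Namings{\pp}$ is a subgroup of $\perm{\AAA}$ (Lemma \ref{LemNamingsClass}), so that to prove $\Eperm{\AAA}\subseteq\Namings{\pp}$ it suffices to exhibit inside $\Namings{\pp}$ a generating set of $\Eperm{\AAA}$; since $\Eperm{\AAA}$ is generated by $3$-cycles, I would aim to realize enough $3$-cycles as renamings. The machine for producing renamings is Lemma \ref{LemQuotientOverZof}: if a switch path $\omega$ carries the standard pair $\pp$ to a standard $\qq=\pp\circ\nu$ with $\qq\simperm\pp$, then $\nu=\mu_\omega\cdot\eta$ with $\eta\in\Zof{\qq}$, where $\mu_\omega$ is read off from Definition \ref{DefMuOmega} (trivial for every inner switch and every outer $\{d,z\}$-switch, and the $3$-cycle $(a,z,d)$ for an outer $\{a,d\}$-switch). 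A preliminary computation of $\Sof{\pp}$ shows that for a single block of either form $\Nof{\pp}$ is the marked letter $[a]$ together with a \emph{single} cycle of the odd length $\#\AAA-1$; hence $\Zof{\pp}$ is the cyclic group this cycle generates and consists of even permutations, so every renaming obtained this way is automatically even (consistent with the target, and giving $\Namings{\pp}\subseteq\Eperm{\AAA}$ by Corollary \ref{CorEvenZof}).

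I would first treat a single block of Form \ref{FormTwos}, which is built from the two-letter units $\mtrx{\LL{a_i}{b_i}\;\LL{b_i}{a_i}}$. The two outer switches through an interior letter $d$ return to a pair of the same shape up to a renaming, and one checks directly that the renaming read off is a $3$-cycle; already for $n=1$ the $\{a,b_1\}$- and $\{a_1,z\}$-switches yield $(a,a_1,b_1)$ and $(a_1,z,b_1)$, two $3$-cycles with distinct supports, which therefore generate $\Eperm{\AAA}=A_4$. In parallel, inner switches permute the units among themselves in the manner of Lemma \ref{LemEmpty}, contributing the even renamings $(a_i\,a_j)(b_i\,b_j)$. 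I would then run an induction on the number $n$ of units, using the block permutations to conjugate the $3$-cycles coming from the outer switches into a family of $3$-cycles that is transitive on $\AAA$ and not confined to a proper subgroup, so that $\Eperm{\AAA}$ is generated; the elementary generation step is of the same flavour as the one invoked in the proof of Lemma \ref{LemOR}.

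Finally, a block of Form \ref{FormFourAndTwos} is a Form \ref{FormOR} block on the four letters $a_1,\dots,a_4$ followed by Form \ref{FormTwos} units, so I would combine the two analyses: the outer switches interacting with the four-letter sub-block reproduce the $3$-cycles that generate the alternating group on $\{a,z,a_1,\dots,a_4\}$, exactly as in the even case $N=4$ of Lemma \ref{LemOR}, while the trailing units contribute $3$-cycles and block transpositions as before. The main obstacle is the controlled bookkeeping of the renamings: a single switch path only pins down $\nu$ up to the centralizer factor $\eta$, so I must either compute $\nu$ outright for each chosen path (which is tractable precisely because $\Zof{\pp}$ is the cyclic, hence even, group computed above) or arrange the paths in products whose centralizer contributions cancel, and then verify that the explicit even renamings so produced really do generate all of $\Eperm{\AAA}$ rather than a proper even subgroup. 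Once such a generating family of $3$-cycles is secured, the inclusion $\Eperm{\AAA}\subseteq\Namings{\pp}$ follows.
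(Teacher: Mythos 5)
Your $n=1$ computation is correct, and your observation that $\Zof{\pp}$ is the even cyclic group generated by the $(2n+1)$-cycle $\Yof{\pp}$ (giving $\Namings{\pp}\subseteq\Eperm{\AAA}$ via Corollary \ref{CorEvenZof}) is also right, though that is the opposite inclusion from the one the lemma asserts. The genuine gap is the pivot of your argument: the claim that an outer switch through an interior letter ``returns to a pair of the same shape up to a renaming'' whose renaming is a $3$-cycle. This holds for $n=1$ only by accident of size. For $n\geq 2$, an $\{a,d\}$-switch transposes the two sub-blocks flanking $d$, which changes the underlying non-labeled permutation $p_1\circ p_0^{-1}$; the resulting pair is therefore not of the form $\pp\circ\nu$ and yields no renaming at all. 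To extract a renaming you must append further inner switches until the original shape is restored, and by Lemma \ref{LemQuotientOverZof} the renaming so obtained is $(a,z,d)\cdot\eta$ with $\eta$ a power of the $(2n+1)$-cycle $\Yof{\pp}$. For the path the paper uses -- the $\{a,c_n\}$-switch followed by the renormalization to the canonical Form \ref{FormTwos} representative guaranteed by Corollary \ref{CorUniqueType} and pinned down by the invariance of $\Sof{\cdot}$ within a Rauzy class -- the renaming is the $(2n+1)$-cycle $\nu_1=(a,b_n,\dots,b_1,c_1,\dots,c_n)$, not a $3$-cycle; the $\{b_n,z\}$-switch likewise yields $\nu_2=(z,c_n,\dots,c_1,b_1,\dots,b_n)$. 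The paper then concludes with Lemma \ref{LemEvenOddGenerators}: two such $(N-1)$-cycles generate $\Eperm{\AAA}$ because $\#\AAA=2n+2$ is even. So you picked the right two switch moves, but the renamings they actually produce are long cycles, and the generation step has to go through Lemma \ref{LemEvenOddGenerators} rather than through a family of $3$-cycles.

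Your proposed induction on the number of two-letter units inherits the same problem: you give no mechanism by which the $3$-cycles of the $n=1$ case become renamings of the larger pair, and inner switches do not simply act as the unit-permutations $(a_i\,a_j)(b_i\,b_j)$ -- they too reshuffle the block and must be completed to closed paths before any renaming can be read off. The ``bookkeeping of the centralizer factor $\eta$'' that you flag at the end is precisely the unresolved content of the proof, not a technicality: for $n\geq 2$ no single outer switch closes up to a $3$-cycle, so the generating set you intend to exhibit is never produced. (It is true a posteriori that every $3$-cycle lies in $\Namings{\pp}$, since the lemma holds, but your argument does not derive any of them.)
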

			
			\begin{proof}
				We will show the proof for $\pp$ of the form
					$$ \pp = \mtrx{\LL{a}{z}\;\LL{b_1}{c_1}\;\LL{c_1}{b_1}\;\LL{\dots}{\dots}\;\LL{b_n}{c_n}\;\LL{c_n}{b_n}\;\LL{a}{z}}$$
				with $\AAA = \{a,b_1,\dots,b_n,c_1,\dots,c_n,z\}$ as the proof for a block of Form \ref{FormFourAndTwos} is similar. Note that $\pp$ is Type \ref{TypeEvenOdd} from Proposition \ref{PropTypes} and
					$$ \Sof{\pp} = (a,c_n,c_{n-1},\dots, c_1,b_1,b_2,\dots,b_n,z).$$
				Let $\qq$ be the result of an $\{a,c_n\}$-switch on $\pp$. Then by direct calculation, we see that
					$$ \Sof{\qq} = (c_n,c_{n-1},\dots,c_1,b_1,b_2,\dots,b_n,a,z).$$
				By Corollary \ref{CorUniqueType}, there exists $\tilde{\qq}$ in the Rauzy Class of $\qq$ that is Type \ref{TypeEvenOdd}. Because $\Sof{\tilde{\qq}} = \Sof{\qq}$, we know that $\tilde{\qq}$ is composed of exactly one block of Form \ref{FormTwos} and is therefore
					$$ \tilde{\qq} = \mtrx{\LL{c_n}{z}\;\LL{b_2}{b_1}\;\LL{b_1}{b_2}\;\LL{b_3}{c_1}\;\LL{c_1}{b_3}\;\LL{\dots}{\dots}\;\LL{b_n}{c_{n-2}}\;\LL{c_{n-2}}{b_n}\;\LL{a}{c_{n-1}}\;\LL{c_{n-1}}{a}\;\LL{z}{c-n}}$$
				Therefore $\tilde{\qq} = \pp \circ \nu_1$ for $\nu_1 = (a,b_n,b_{n-1},\dots,b_1,c_1,c_2\dots,c_n)\in\Namings{\pp}$.
				
				If we act on $\pp$ by a $\{b_n,z\}$-switch move and call the resulting permutation $\rr$, then
					$$ \Sof{\rr} = (a,z,c_n,c_{n-1},\dots,c_1,b_1,b_2,\dots,b_n).$$
				Again by Corollary \ref{CorUniqueType}, $\tilde{\rr}$ exists in the Extended Rauzy Class of $\pp$ and is
					$$ \tilde{\rr} = \mtrx{\LL{a}{b_n}\;\LL{c_1}{c_2}\;\LL{c_2}{c_1}\;\LL{b_1}{c_3}\;\LL{c_3}{b_1}\;\LL{\dots}{\dots}\;\LL{b_{n-2}}{c_n}\;\LL{c_n}{b_{n-2}}\;\LL{b_{n-1}}{z}\;\LL{z}{b_{n-1}}\;\LL{b_n}{a}}.$$
				We see that $\tilde{\rr} = \pp \circ \nu_2$ for $\nu_2=(z,c_n,c_{n-1},\dots,c_1,b_1,b_2,\dots,b_n)\in\Namings{\pp}$.
				
				We conclude by Lemma \ref{LemEvenOddGenerators} because $\Namings{\pp}$ contains
					$$ <\nu_1,\nu_2> = <\nu_1^{-1},\nu_2> = \Eperm{\AAA}$$
				as $\#\AAA = 2n +2$.
			\end{proof}
			
		\begin{lem}[$B$ is Form \ref{FormThreeAndTwos}]\label{LemEvenCycles}
			If $\pp\in\irr{\AAA}$ is composed of one block of Form \ref{FormThreeAndTwos} then $\Eperm{\AAA}\subseteq\Namings{\pp}$. Furthermore, if $m=n$, where $m,n\geq 0$ come from the definition of Form \ref{FormThreeAndTwos}, then $\Namings{\pp} = \perm{\AAA}$.
		\end{lem}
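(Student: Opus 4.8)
The plan is to follow the template of Lemma \ref{LemEvenSpin}. First I would fix the standard representative
$$ \pp = \mtrx{\LL{a}{z}\;\LL{a_1}{b_1}\;\LL{b_1}{a_1}\;\LL{\dots}{\dots}\;\LL{a_m}{b_m}\;\LL{b_m}{a_m}\;\LL{c}{e}\;\LL{d}{d}\;\LL{e}{c}\;\LL{f_1}{g_1}\;\LL{g_1}{f_1}\;\LL{\dots}{\dots}\;\LL{f_n}{g_n}\;\LL{g_n}{f_n}\;\LL{z}{a}} $$
and compute $\Sof{\pp}$ directly from \eqref{eq_SofPi}. Unlike the single-cycle output of Lemma \ref{LemEvenSpin}, the result here is a product of \emph{two} cycles,
$$ \Sof{\pp} = (a,g_n,\dots,g_1,d,f_1,\dots,f_n,z)\,(c,e,b_m,\dots,b_1,a_1,\dots,a_m), $$
so that $\Yof{\pp}$ has cycles of lengths $2n+2$ and $2m+2$ and $\Pof{\pp}=\{2m+2,\,2n+2\}$. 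In particular $\Pof{\pp}$ fails to be simple exactly when $m=n$, which is precisely what separates the two conclusions.

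For the inclusion $\Eperm{\AAA}\subseteq\Namings{\pp}$ I would produce explicit generators exactly as in Lemma \ref{LemEvenSpin}: apply an outer switch to $\pp$, compute the new permutation, use Corollary \ref{CorUniqueType} to return to a standard pair $\qq$ of the same Type \ref{TypeOdd}, and read off the renaming from $\Nof{\qq}=\Nof{\pp}*\nu$. A switch acting inside the cycle carrying $z$ and a switch acting inside the cycle $(c,e,b_\bullet,a_\bullet)$ give generators $\nu_1,\nu_2$, which the computation shows to be even. Since the cycle $(c,e,b_\bullet,a_\bullet)$ contains neither frame letter $a$ nor $z$, reaching its letters requires routing a switch through the central three-block on $c,d,e$ — the one place where the two cycles of $\Sof{\pp}$ sit adjacently in $\pp$ — producing a further even renaming $\nu_3$ that crosses between the two supports. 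I would then feed $\nu_1,\nu_2,\nu_3$ to Lemma \ref{LemEvenOddGenerators} applied on each cycle's support, together with the elementary fact that the alternating groups of two complementary supports, augmented by a single even permutation mixing them, generate all of $\Eperm{\AAA}$.

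For the supplementary claim, suppose $m=n$, so the two cycles of $\Sof{\pp}$ share the common even length $2m+2$. Here I would exhibit a single \emph{odd} renaming. Interchanging the two equal cycles is realizable by a switch path $\omega$, and Lemma \ref{LemQuotientOverZof} writes the induced renaming as $\tau=\mu_\omega\cdot\eta$ with $\eta\in\Zof{\qq}$. The factor $\mu_\omega$ is a product of the $3$-cycles of Definition \ref{DefMuOmega} and hence even, while tracking $\eta$ shows that it acts as a single $(2m+2)$-cycle on one of the two supports; a cycle of even length is an \emph{odd} permutation, so $\tau$ is odd. Combined with the first part this yields $\Namings{\pp}\supseteq\langle\Eperm{\AAA},\tau\rangle=\perm{\AAA}$, and equality is automatic since $\Namings{\pp}\subseteq\perm{\AAA}$ by Lemma \ref{LemNamingsClass}.

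The hard part will be the bookkeeping forced by the two-cycle structure of $\Sof{\pp}$. In Lemma \ref{LemEvenSpin} a single cycle carried both frame letters $a,z$, so two outer switches sufficed; here the left cycle carries neither, so every generator touching it must first be routed through the central three-block, and one must verify at each step that the switch stays inside $\ExtClassLab{\pp}$ and returns to standard Form \ref{FormThreeAndTwos}. Checking that the interchange renaming $\tau$ really contributes the $(2m+2)$-cycle factor — and is therefore odd precisely because $2m+2$ is even — is the crux of the $m=n$ case.
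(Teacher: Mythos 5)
Your proposal has the right overall shape---the two-cycle structure of $\Sof{\pp}$, alternating groups on each cycle's support, and an odd renaming when $m=n$---but the two load-bearing steps are exactly where it diverges from what works, and both have genuine gaps. First, your mechanism for reaching the cycle containing neither $a$ nor $z$ (``routing a switch through the central three-block'') is not a move the formalism provides: outer switches by definition involve $a$ or $z$, and inner switches contribute only trivial $\mu_\omega$. The paper's resolution is different: a single $\{a,b_1\}$-switch produces a pair whose marked letter lies in the \emph{other} cycle of $\Sof{\cdot}$, and Corollary \ref{CorUniqueType} returns it to Form \ref{FormThreeAndTwos} with the roles of $m$ and $n$ exchanged, so the same argument run there yields $\Eperm{\BBB'}$ on the second support. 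Second, your combination step invokes the ``elementary fact'' that two alternating groups on complementary \emph{disjoint} supports plus one even mixing permutation generate $\Eperm{\AAA}$. That is false in general: if the mixing permutation permutes the two supports as blocks, the generated group preserves that block system and is proper. The paper sidesteps this entirely by arranging the two supports to overlap in the letter $z$ ($\BBB=\{a,e,g_\bullet,h_\bullet,z\}$ and $\BBB'=\{b_\bullet,c_\bullet,d,f,z\}$), so that Lemma \ref{LemCombiningGroups}---which explicitly requires nonempty intersection---applies. Your disjoint-support variant is not among the appendix lemmas and would need its own (and necessarily conditional) proof.

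For $m=n$, your heuristic does not survive inspection: the elementwise interchange of two disjoint cycles of length $2m+2$ is a product of $2m+2$ transpositions, which is \emph{even}, so the required odd parity cannot come from ``interchanging the two equal cycles.'' You patch this by asserting that the centralizer factor $\eta$ is a single $(2m+2)$-cycle, but that is precisely the computation you have not done, and it is where the content lies. The paper's odd renaming is not an interchange at all: the $\{a,b_1\}$-switch followed by inner switches (computed in the appendix) gives $\uu=\pp\circ\mu$ with $\mu$ a single cycle on $4m+4$ of the $4m+5$ letters, hence odd. You also leave the degenerate cases unaddressed: the paper treats $n=0,1$ separately before the general computation, and for $m=n=0$ the pair is the hyperelliptic form of Lemma \ref{LemOR} with $N=5$, which is how that subcase is actually settled.
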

		
		\begin{proof}
			We will express $\pp$ as
				$$ \pp = \mtrx{\LL{a}{z}\;\LL{b_1}{c_1}\;\LL{c_1}{b_1}\;\LL{\dots}{\dots}\;\LL{b_m}{c_m}\;\LL{c_m}{b_m}\;\LL{d\;e\;f}{f\;e\;d}\;\LL{g_1}{h_1}\;\LL{h_1}{g_1}\;\LL{\dots}{\dots}\;\LL{g_n}{h_n}\;\LL{h_n}{g_n}\;\LL{z}{a}}$$
			with $\Sof{\pp} = (a,h_n,\dots,h_1,e,g_1,\dots,g_n,z)(b_1,\dots,b_m,d,f,c_m,\dots,c_1)$.			
			
			Let $\BBB = \{a,e,g_1,\dots,g_n,h_1,\dots,h_n,z\}$. We will show that $\Eperm{\BBB}\subseteq\Namings{\pp}$. By performing an $\{a,b_1\}$-switch, we arrive at a new permutation $\tilde{\qq}$ with $$\Sof{\tilde{\qq}} = (b_1,\dots,b_m,d,f,c_m,\dots,c_1,z)(a,h_n,\dots,h_1,e,g_1,\dots,g_n)$$ and $\Xof{\qq}=b_1$. By Corollary \ref{CorUniqueType}, there exists $\qq$ in the Rauzy Class of $\tilde{\qq}$ of Form \ref{FormThreeAndTwos} with the role of $m$ and $n$ reversed. The following proof would then show that $\Eperm{\BBB'} \subseteq \Namings{\qq}$, where $\BBB'=\{b_1,\dots,b_m,c_1,\dots,c_m,d,f,z\}$. Because $\Namings{\pp} = \Namings{\qq}$ by Lemma \ref{LemNamingsClass}, $\Eperm{\AAA} = <\Eperm{\BBB},\Eperm{\BBB'}> \subseteq \Namings{\pp}$ by Lemma \ref{LemCombiningGroups}.
			
			The cases for $n=0$ and $n=1$ may be handled first. We now suppose $n\geq 2$. By performing an $\{a,h_n\}$-switch followed by an $\{h_{n-1},z\}$-switch on $\pp$, we arrive at $\tilde{\qq}$ of the form
				$$ \tilde{\qq} = \mtrx{\LL{h_n}{h_{n-1}}\;\LL{g_n}{g_{n-1}}\;\LL{z}{z}\;\LL{a}{g_n}\;\LL{*}{a}\;\LL{d}{*}\;\LL{e}{f}\;\LL{f}{e}\;\LL{\#}{d}\;\LL{g_{n-1}}{\#}\;\LL{h_{n-1}}{h_n}}$$
			where
				\begin{equation}\label{EqStarsAndPounds}
					\bmtrx{\LL{*}{*}}
						= \bmtrx{\LL{b_1}{c_1}\;\LL{c_1}{b_1}\;\LL{\dots}{\dots}\;\LL{b_m}{c_m}\;\LL{c_m}{b_m}}
					\mbox{ and }
					\bmtrx{\LL{\#}{\#}}
						= \bmtrx{\LL{g_1}{h_1}\;\LL{h_1}{g_1}\;\LL{\dots}{\dots}\;\LL{g_{n-2}}{h_{n-2}}\;\LL{h_{n-2}}{g_{n-2}}}
				\end{equation}
			By direct computation (provided in the appendix), we find $\qq$ in the Rauzy Class of $\tilde{\qq}$ with $\qq = \pp\circ\nu_1$, where
				$$ \nu_1 = \left\{\begin{array}{r}
				    (a,g_{n-1},g_{n-3}\dots g_1,h_1,h_3 \dots h_{n-1},z,g_n,g_{n-2}\dots g_2,e,h_2,h_4\dots h_n) \\
					  \mbox{if }n\mbox{ is even},\\
				    (a,g_{n-1},g_{n-3}\dots g_2,e,h_2,h_4 \dots h_{n-1},z,g_n,g_{n-2}\dots g_1,h_1,h_3\dots h_n) \\
					  \mbox{if }n\mbox{ is odd}.
				           \end{array}\right.$$
			We then see that $\nu_1^{n+1} = (a,z,h_n,h_{n-1},\dots,h_1,e,g_1,g_2,\dots,g_n)$.

			By performing an $\{h_n,z\}$-switch followed by an $\{a,h_{n-1}\}$-switch on $\pp$, we arrive at $\tilde{\rr}$ of the form
				$$ \tilde{\rr} = \mtrx{\LL{h_{n-1}}{h_n}\;\LL{g_n}{g_{n-1}}\;\LL{a}{a}\;\LL{z}{g_n}\;\LL{*}{z}\;\LL{d}{*}\;\LL{e}{f}\;\LL{f}{e}\;\LL{\#}{d}\;\LL{g_{n-1}}{\#}\;\LL{h_n}{h_{n-1}}},$$
				remembering \eqref{EqStarsAndPounds}.
			Again by direct computation, we find $\rr$ in the Rauzy Class of $\tilde{\rr}$ such that $\rr = \pp \circ \nu_2$, where
				$$ \nu_2 = \left\{\begin{array}{r}
				    (a,g_{n},g_{n-2}\dots g_2,e,h_2,h_4 \dots h_{n},z,g_{n-1},g_{n-3}\dots g_1,h_1,h_3\dots h_{n-1}) \\
					  \mbox{if }n\mbox{ is even},\\
				    (a,g_{n},g_{n-2}\dots g_1,h_1,h_3 \dots h_{n},z,g_{n-1},g_{n-3}\dots g_2,e,h_2,h_4\dots h_{n-1}) \\
					  \mbox{if }n\mbox{ is odd}.
				           \end{array}\right.$$
			We see that $\nu_2^{n+1} = (z,a,h_n,h_{n-1}\dots,h_1,e,g_1,g_2\dots g_n)$.

			By Lemma \ref{LemCycleTransposeGenerators},
			    $$ \Eperm{\BBB} = <\nu^{n+1}_1,\nu_2^{n+1}> \subseteq \Namings{\pp},$$
			as $\nu_2^{n+1} = (a,z)\cdot \nu_1^{n+1} \cdot (a,z)$.

			If $m=n=0$, then $\pp$ is actually the form from Lemma \ref{LemOR} with $N=5$, and $\Namings{\pp} = \perm{\AAA}$.
			If $m=n\geq 1$, then we let $\tilde{\uu}$ be the result of a $\{a,b_1\}$-switch on $\pp$. This is of the form
			     $$ \tilde{\uu} = \mtrx{\LL{b_1}{z}\;\LL{c_1}{*'}\;\LL{*'}{f}\;\LL{d}{e}\;\LL{e}{d}\;\LL{f}{\#'}\;\LL{\#'}{a}\;\LL{a}{c_1}\;\LL{z}{b_1}},$$
			     where
				$$ \bmtrx{\LL{*'}{*'}}
						= \bmtrx{\LL{b_2}{c_2}\;\LL{c_2}{b_2}\;\LL{\dots}{\dots}\;\LL{b_m}{c_m}\;\LL{c_m}{b_m}}
					\mbox{ and }
					\bmtrx{\LL{\#'}{\#'}}
						= \bmtrx{\LL{g_1}{h_1}\;\LL{h_1}{g_1}\;\LL{\dots}{\dots}\;\LL{g_{m}}{h_{m}}\;\LL{h_{m}}{g_{m}}}$$
			By direct calculation (provided in appendix), we arrive at $\uu$ in the Rauzy Class of $\tilde{\uu}$ such that $\uu=\pp\circ\mu$ where 
			    $$ \mu = (a,c_1,g_m,c_2,g_{m-1},\dots,c_m,g_1,f,e,d,h_1,b_m,h_2,b_{m-1},\dots,h_m,b_1)$$
			a cycle on $4m+4$ elements and so an odd permutation. $\Namings{\pp}$ contains $\Eperm{\AAA}$ and an odd permutation, and therefore $\Namings{\pp} = \perm{\AAA}$.
		\end{proof}
		
	    \subsection{Multiple Block Permutations}
	    
	    The previous section speaks for very specific pairs. The next lemma allows us to use these results to examine more general pairs, those composed of blocks, by combinations of simpler pairs.

		\begin{lem}\label{LemCombiningBlocks}
		    If $\pp\in\irr{\AAA}$ is of the form
			  $$ \pp = \mtrx{\LL{a}{z}\;B_1\;\LL{s}{s}\;B_2\;\LL{z}{a}}$$
		    where $B_1$ and $B_2$ are blocks, then
			  $$ <\Namings{\qq}, \Namings{\rr}, \Eperm{\{a,s,z\}}>\subseteq \Namings{\pp}$$
		    for
			  $$ \qq = \mtrx{\LL{a}{z}\; B_2 \; \LL{z}{a}}\mbox{ and } \rr = \mtrx{\LL{s}{z}\;B_1\;\LL{z}{s}}.$$
		\end{lem}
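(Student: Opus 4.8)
The plan is to use that $\Namings{\pp}$ is a group (Lemma~\ref{LemNamingsClass}), so it suffices to place each of the three generating sets inside $\Namings{\pp}$. Write $\BBB_1,\BBB_2\subseteq\AAA$ for the letter sets of $B_1,B_2$, so $\AAA=\{a,s,z\}\cup\BBB_1\cup\BBB_2$, and note that $B_1\cup\{s\}$ is a genuine (contiguous) block of $\pp$, which I will treat as an inert block. I would prove $\Eperm{\{a,s,z\}}\subseteq\Namings{\pp}$, the containment $\Namings{\qq}\subseteq\Namings{\pp}$ (viewing $\Namings{\qq}\subseteq\perm{\AAA}$ by fixing $\{s\}\cup\BBB_1$), and $\Namings{\rr}\subseteq\Namings{\pp}$ (fixing $\{a\}\cup\BBB_2$). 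The last will be reduced to the second: an $\{a,s\}$-switch carries $\pp$ to $\pp''=\mtrx{\LL{s}{z}\;B_2\;\LL{a}{a}\;B_1\;\LL{z}{s}}\in\ExtClassLab{\pp}$, so $\Namings{\pp''}=\Namings{\pp}$ by Lemma~\ref{LemNamingsClass}; since $\pp''$ is again of the combining form, now with outer letters $s,z$, middle letter $a$, first block $B_2$ and second block $B_1$, its ``second-block'' sub-pair is exactly $\rr=\mtrx{\LL{s}{z}\;B_1\;\LL{z}{s}}$. Hence $\Namings{\rr}\subseteq\Namings{\pp''}$ is an instance of the $\qq$-containment applied to $\pp''$, and the lemma follows once the $\qq$-containment and $\Eperm{\{a,s,z\}}\subseteq\Namings{\pp}$ are established in general.

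For $\Eperm{\{a,s,z\}}$, note this group is $\langle(a,z,s)\rangle$, so it is enough to realize the single $3$-cycle $(a,z,s)$ as a renaming. I would compute explicitly: perform the $\{a,s\}$-switch to reach $\pp''$ above, and then perform the (outer) $\{a,z\}$-switch on $\pp''$, that is the $\{c,z\}$-switch with $c=a$ the middle letter of $\pp''$. A direct application of the outer-switch formula gives
$$\mtrx{\LL{s}{a}\;B_1\;\LL{z}{z}\;B_2\;\LL{a}{s}},$$
which has the same block arrangement as $\pp$ with $a,z,s$ cyclically relabeled, hence equals $\pp\circ(a,z,s)$. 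As it lies in $\ExtClassLab{\pp}$, we conclude $(a,z,s)\in\Namings{\pp}$. This computation is insensitive to the internal structure of $B_1,B_2$. (Alternatively one may track $\mu_\omega$ along this two-switch path via Lemma~\ref{LemQuotientOverZof}.)

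For $\Namings{\qq}\subseteq\Namings{\pp}$, observe first that $\pp$ is the canonical prefix insertion of the inert block $B_1\cup\{s\}$ into $\qq=\mtrx{\LL{a}{z}\;B_2\;\LL{z}{a}}$, placed immediately after the leading letters of each row; in general the canonical insertion into a pair $\mtrx{\LL{\alpha}{\zeta}\;B\;\LL{\zeta}{\alpha}}$ puts $B_1,s$ right after $\alpha,\zeta$. Given $\nu\in\Namings{\qq}$, both $\qq$ and $\qq\circ\nu$ are standard (renaming does not change the underlying non-labeled permutation), so the switch-path characterization of extended classes yields a switch path $\qq=\qq_0,\qq_1,\dots,\qq_k=\qq\circ\nu$ through standard pairs. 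I would lift this path to $\pp$ by induction, maintaining the invariant that the lifted pair $\pp_i$ equals the canonical insertion of $B_1\cup\{s\}$ into $\qq_i$, so that $\pp_0=\pp$ and $\pp_k=\pp\circ\nu$. Each switch $\qq_i\to\qq_{i+1}$ is either an inner switch among $\BBB_2$ or an outer switch involving $a$ or $z$; inserting the inert block shifts all $\BBB_2$-positions by the same constant on both rows, so the inequalities defining a switch hold on $\qq_i$ exactly when they hold on $\pp_i$, and the corresponding switch is available on $\pp_i$.

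The inner switches lift with no further work: by Lemma~\ref{LemSwitchAndInsert}(1) they leave the insertion rule unchanged, so the inert block stays canonical and the invariant is preserved. The outer switches are the main obstacle: lifting an $\{a,d\}$- or $\{d,z\}$-switch swaps the two halves of $\pp_i$ and leaves the inert block contiguous but displaced from its canonical position (precisely the change of insertion rule recorded in Lemma~\ref{LemSwitchAndInsert}(2),(3)). To restore the invariant I would slide the still-contiguous block $B_1\cup\{s\}$ back to the front of the inner region by inner switches transposing it past the intervening blocks; these keep us in $\ExtClassLab{\pp}$ and, since block transpositions preserve internal labels and the relative order of the remaining blocks, they land exactly on the canonical insertion into $\qq_{i+1}$. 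Completing the induction gives $\pp\circ\nu=\pp_k\in\ExtClassLab{\pp}$, hence $\nu\in\Namings{\pp}$; the degenerate cases ($B_1$ or $B_2$ empty) are subsumed. The one delicate point is checking that the displaced inert block remains a genuine block after each outer switch, so that the repositioning inner switches are indeed available, which one verifies by inspection of the outer-switch formulas.
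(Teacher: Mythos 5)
Your overall strategy is the paper's: realize a $3$-cycle on $\{a,s,z\}$ by a two-outer-switch path, reduce the $\rr$-containment to the $\qq$-containment via the $\{a,s\}$-switch to $\mtrx{\LL{s}{z}\;B_2\;\LL{a}{a}\;B_1\;\LL{z}{s}}$, and prove the $\qq$-containment by lifting a switch path from $\qq$ to $\qq\circ\nu$ while keeping $B_1\cup\{s\}$ parked at the front. The $3$-cycle computation and the $\rr$-reduction are correct. There is, however, a genuine error in the lifting step: you assert that inner switches ``lift with no further work'' because Lemma \ref{LemSwitchAndInsert}(1) preserves the insertion rule. That inference fails. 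Writing the inner $\{b,c\}$-switch on $\qq_i$ as the swap of the segments $\overline{u}\,\LL{b}{b}$ and $\overline{v}\,\LL{c}{c}$, the same switch applied to the lift
$$ \pp_i=\mtrx{\LL{a}{z}\;B_1\;\LL{s}{s}\;\overline{u}\;\LL{b}{b}\;\overline{v}\;\LL{c}{c}\;\overline{w}\;\LL{z}{a}} $$
swaps $B_1\,\LL{s}{s}\,\overline{u}\,\LL{b}{b}$ with $\overline{v}\,\LL{c}{c}$ and produces $\mtrx{\LL{a}{z}\;\overline{v}\;\LL{c}{c}\;B_1\;\LL{s}{s}\;\overline{u}\;\LL{b}{b}\;\overline{w}\;\LL{z}{a}}$: the inert block has been displaced. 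Lemma \ref{LemSwitchAndInsert}(1) only says the inserted material stays attached to the same letters $b_0,b_1$, and those letters themselves move under the switch, so ``same insertion rule'' is not ``same (front) position.'' Your induction invariant therefore breaks at the first inner switch of the path.

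The repair is exactly the correction you already propose in the outer case, and it is what the paper does uniformly: follow every lifted switch, inner or outer, by one correcting inner switch returning $B_1\,\LL{s}{s}$ to the front --- a $\{c,s\}$-switch after an inner $\{b,c\}$-switch, an $\{a,s\}$-switch after an outer $\{a,b\}$-switch, and an $\{s,z\}$-switch after an outer $\{b,z\}$-switch. (Your worry about needing several sliding switches after an outer move is also unfounded: one suffices, because after any single lifted switch the displaced block is immediately preceded by a double letter, namely $c$, $a$ or $z$ respectively.) With that one sentence repaired, your argument coincides with the paper's proof.
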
		
		
		\begin{proof}
			By applying an $\{s,z\}$-switch followed by an $\{a,z\}$-switch, we verify that $(a,s,z)\in \Namings{\pp}$.

			We claim the following: if a switch move $\omega$ acts on $\qq$ with result $\tilde{\qq}$ of the form
						$$ \tilde{\qq} = \mtrx{\LL{\tilde{a}}{\tilde{z}}\; \tilde{B}_2 \; \LL{\tilde{z}}{\tilde{a}}}$$ 
			then by acting on $\pp$ by switch path $\omega'$ that is of length two and begins with $\omega$, we arrive at
						$$ \tilde{\pp} = \mtrx{\LL{\tilde{a}}{\tilde{z}}\; B_1\;\LL{s}{s}\;\tilde{B}_2 \; \LL{\tilde{z}}{\tilde{a}}}.$$
			It then follows by induction on path length that if $\qq\circ \mu \in \ExtClassLab{\qq}$ then $\pp \circ \mu \in \ExtClassLab{\pp}$.
			
			We may verify the claim by cases. If $\omega$ is a $\{b,c\}$-switch ($q_\eps(b)<q_\eps(c)$ for $\eps\in\{0,1\}$) on $\qq$, then $\omega'$ is $\omega$ followed by a $\{c,s\}$-switch. If $\omega$ is a $\{a,b\}$-switch, then $\omega'$ is $\omega$ followed by an $\{a,s\}$-switch. If $\omega$ is a $\{b,z\}$-switch, then $\omega'$ is $\omega$ followed by an $\{s,z\}$-switch.
				
			We may first act on $\pp$ by an $\{a,s\}$-switch to arrive at
				$ \mtrx{\LL{s}{z}\;B_2\;\LL{a}{a}\;B_1\;\LL{z}{s}}.$
			We repeat the above arguments for $\rr$ instead to find
				$ \mtrx{\LL{\tilde{s}}{\tilde{z}}\;B_2\;\LL{a}{a}\;\tilde{B}_1\;\LL{\tilde{z}}{\tilde{s}}}$
			where $\tilde{\rr} = \mtrx{\LL{\tilde{s}}{\tilde{z}}\; \tilde{B}_1 \; \LL{\tilde{z}}{\tilde{s}}}$ for $\tilde{\rr} = \rr\circ \mu$, $\mu \in \Namings{\rr}$. We act on this pair by an $\{a,\tilde{s}\}$-switch to achieve
					$$ \tilde{\pp} = \mtrx{\LL{a}{\tilde{z}}\;\tilde{B}_1\;\LL{\tilde{s}}{\tilde{s}}\;B_2\;\LL{\tilde{z}}{a}} \in\ExtClassLab{\pp}$$
			or $\mu \in \Namings{\pp}$ as desired.
			
		\end{proof}
		
		We now have all necessary components to show the following proposition, finishing the section. In particular, this result shows that $\#\Namings{\pp} \geq N!/2$ for any $\pp\in\irr{\AAA}$ where $\#\AAA = N$.
		
		\begin{prop}\label{PropNamingsContainEven}
		 If $\pp\in\irr{\AAA}$, then $\Eperm{\AAA} \subseteq \Namings{\pp}$.
		\end{prop}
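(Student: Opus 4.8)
The plan is to reduce the general pair to the special single-block and multi-block forms handled in Section~4, then invoke those lemmas together with the block-combining machinery. The overall strategy rests on three facts already established: that $\Namings{\pp}$ depends only on the Extended Rauzy Class (Lemma~\ref{LemNamingsClass}), that every $\pp$ has a representative composed of blocks of the Forms in Proposition~\ref{PropForms} (indeed of one of the Types in Proposition~\ref{PropTypes}), and that the single-block cases have all been verified to contain $\Eperm{\cdot}$ in Lemmas~\ref{LemEmpty}, \ref{LemOR}, \ref{LemEvenSpin}, and \ref{LemEvenCycles}.

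First I would use Lemma~\ref{LemNamingsClass} to replace $\pp$ by any convenient representative in its Extended Rauzy Class; by Proposition~\ref{PropForms} we may assume $\pp$ is composed of blocks $\BBB_0 \cup \BBB_1 \cup \cdots \cup \BBB_k$, each $\BBB_j$ ($j \geq 1$) being empty or of one of Forms \ref{FormOR}--\ref{FormThreeAndTwos}. The proof then proceeds by induction on the number of nontrivial blocks $k$. The base case $k=1$ is precisely a single-block permutation of one of the listed Forms, so $\Eperm{\AAA} \subseteq \Namings{\pp}$ follows directly from whichever of Lemmas~\ref{LemEmpty}--\ref{LemEvenCycles} applies. (When there are no nontrivial blocks the statement is vacuous or trivial, since $\#\AAA \leq 2$ forces $\Eperm{\AAA}$ to be trivial.)

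For the inductive step, I would write $\pp = \mtrx{\LL{a}{z}\;B_1\;\LL{s}{s}\;B_2\;\LL{z}{a}}$ by grouping one nontrivial block as $B_1$ and collecting the remaining blocks into $B_2$ — such a decomposition exists because $\pp$ is composed of blocks and we may insert a marker $\LL{s}{s}$ by a suitable choice of representative. Applying Lemma~\ref{LemCombiningBlocks} gives
$$ \langle \Namings{\qq}, \Namings{\rr}, \Eperm{\{a,s,z\}}\rangle \subseteq \Namings{\pp}, $$
where $\qq = \mtrx{\LL{a}{z}\;B_2\;\LL{z}{a}}$ and $\rr = \mtrx{\LL{s}{z}\;B_1\;\LL{z}{s}}$. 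Now $\rr$ is a single-block permutation, so $\Eperm{\BBB_1 \cup \{s,z\}} \subseteq \Namings{\rr}$ by the single-block lemmas, while $\qq$ has strictly fewer nontrivial blocks, so by the inductive hypothesis $\Eperm{\BBB_0 \cup \BBB_2 \cup \cdots} \subseteq \Namings{\qq}$, i.e. $\Eperm{\{a,z\} \cup B_2} \subseteq \Namings{\qq}$. It then remains to check that these alternating subgroups on overlapping subalphabets, together with $\Eperm{\{a,s,z\}}$, generate all of $\Eperm{\AAA}$; this is exactly the kind of ``combining groups'' statement packaged in Lemma~\ref{LemCombiningGroups}, since the subalphabets share the common letters $a$ and $z$ and their union is $\AAA$.

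\emph{The main obstacle} I anticipate is handling the subtleties of the decomposition and the overlap bookkeeping cleanly: one must ensure that a representative of the required shape $\mtrx{\LL{a}{z}\;B_1\;\LL{s}{s}\;B_2\;\LL{z}{a}}$ genuinely exists (that the blocks can be separated by a marker letter without destroying the block structure), and that the alternating groups on the two overlapping subalphabets $\BBB_1 \cup \{s,z\}$ and $\{a,z\} \cup B_2$, each containing at least the shared transpositions through $z$ (and $a$, $s$), do in fact generate $\Eperm{\AAA}$ via Lemma~\ref{LemCombiningGroups} rather than a proper subgroup. Verifying that the hypotheses of Lemma~\ref{LemCombiningGroups} are met — in particular that the two pieces overlap in enough letters for the join to be the full alternating group on the union — is the crux; the rest is routine once the inductive framework is in place.
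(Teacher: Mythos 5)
Your proposal is correct and takes essentially the same route as the paper: reduce to a block-composed representative via Lemma \ref{LemNamingsClass} and Proposition \ref{PropForms}, then induct on the number of blocks using the single-block Lemmas \ref{LemOR}, \ref{LemEvenSpin}, \ref{LemEvenCycles} together with Lemmas \ref{LemCombiningBlocks} and \ref{LemCombiningGroups}. The marker letter whose existence you worry about is already built into the normal form, where consecutive blocks are separated by singleton letters $s_j$ occupying the same position in both rows, so the decomposition needed for Lemma \ref{LemCombiningBlocks} comes for free.
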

		
		\begin{proof}
			By Lemma \ref{LemNamingsClass} and Proposition \ref{PropForms}, we may assume that
				$$ \pp = \mtrx{\LL{a}{z}\;B_1\;\LL{s_1}{s_1}\;B_2\;\LL{s_2}{s_2}\;\dots\;\LL{s_{m-1}}{s_{m-1}}\;B_m\;\LL{z}{a}}$$
			where each $B_j$ is of one of the Forms in Proposition \ref{PropForms}. The result then follows from applying Lemmas \ref{LemOR}, \ref{LemEvenSpin} and \ref{LemEvenCycles} along with Lemmas \ref{LemCombiningBlocks} and \ref{LemCombiningGroups} by induction on $m$.
		\end{proof}

	 \section{Containing the Symmetric Group}

		  \begin{prop}\label{PropFullGroup}
			Let $\pp\in\irr{\AAA}$. Then $\Namings{\pp} \subseteq \Eperm{\AAA}$ if and only if $\Pof{\pp}$ is simple.
		  \end{prop}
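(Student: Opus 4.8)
The plan is to exploit that $\Eperm{\AAA}\subseteq\Namings{\pp}$ always holds (Proposition \ref{PropNamingsContainEven}). Since $\Eperm{\AAA}$ has index $2$ in $\perm{\AAA}$, the only subgroups lying between $\Eperm{\AAA}$ and $\perm{\AAA}$ are these two, so $\Namings{\pp}$ is either $\Eperm{\AAA}$ or $\perm{\AAA}$. Hence $\Namings{\pp}\subseteq\Eperm{\AAA}$ is equivalent to $\Namings{\pp}$ containing no odd permutation, and the proposition becomes: $\Namings{\pp}$ contains an odd permutation if and only if $\Pof{\pp}$ is not simple. Because $\Namings{\pp}$ (Lemma \ref{LemNamingsClass}), $\Pof{\pp}$ (Corollary \ref{CorPofInvariant}) and $\Spinof{\pp}$ (Proposition \ref{prop_spin}) are all invariants of the Extended Rauzy Class, I would first replace $\pp$ by a convenient representative: by Propositions \ref{PropForms} and \ref{PropTypes} I take $\pp$ composed of blocks and of one of the listed Types.

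For the forward implication, assume $\Pof{\pp}$ is simple and take $\nu\in\Namings{\pp}$. If every cycle of $\Yof{\pp}$ has odd length, then each such cycle is an even permutation, and since the lengths are distinct the centralizer $\Zof{\pp}$ is generated by powers of these cycles with no equal-length cycles to interchange; thus $\Zof{\pp}\subseteq\Eperm{\AAA}$, and Corollary \ref{CorEvenZof} gives $\nu$ even. The remaining case, in which $\Yof{\pp}$ has a cycle of even length, is the crux: here an even-length cycle is itself an odd element of $\Zof{\pp}$, so $\Zof{\pp}\not\subseteq\Eperm{\AAA}$ and Corollary \ref{CorEvenZof} no longer applies. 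I would instead pass to spin via Lemma \ref{LemCombinedInserts}: insert letters combining the even cycles, which are of pairwise distinct lengths because $\Pof{\pp}$ is simple, into odd cycles and then all cycles into a single odd cycle, producing $\hat{\pp}$ with $\Spinof{\hat{\pp}}$ well defined. The renaming $\nu$ extends to a renaming of $\hat{\pp}$ realized by re-choosing the insertion rule; by the parity clause of Lemma \ref{LemCombinedInserts}, an odd $\nu$ would alter $\Spinof{\hat{\pp}}$, contradicting the invariance of spin in the Extended Rauzy Class (Proposition \ref{prop_spin}). Hence $\nu$ is even and $\Namings{\pp}\subseteq\Eperm{\AAA}$.

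For the reverse implication, assume $\Pof{\pp}$ is not simple, so two cycles of $\Yof{\pp}$ have a common length $\ell$, and produce an explicit odd renaming. Using Proposition \ref{PropSufficient} I choose a representative that gathers these two cycles into a single sub-block (this preserves $\Pof{\pp}$ and the Type), and by the block-combining Lemma \ref{LemCombiningBlocks} it then suffices to exhibit an odd renaming for the sub-pair carrying just the two equal cycles. When $\ell$ is even the two cycles arise together from a Form \ref{FormThreeAndTwos} block with $m=n$, and Lemma \ref{LemEvenCycles} already furnishes a renaming equal to a single cycle on $4m+4$ letters, an odd permutation. When $\ell$ is odd, an explicit switch-path computation in the spirit of Lemmas \ref{LemOR} and \ref{LemEvenCycles} interchanges the two cycles together with their separators, giving a renaming that is a product of $\ell$ transpositions and hence odd. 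In either case $\Namings{\pp}$ contains an odd permutation, so $\Namings{\pp}=\perm{\AAA}\not\subseteq\Eperm{\AAA}$.

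I expect the main obstacle to be the even-cycle case of the forward implication. There the clean centralizer criterion of Corollary \ref{CorEvenZof} fails outright, and the parity of a renaming must be recovered from the spin structure of an auxiliary inserted pair. Making precise that a renaming of $\pp$ corresponds to an admissible change of insertion rule covered by Lemma \ref{LemCombinedInserts}, and that its parity is exactly the resulting spin discrepancy, is the delicate point on which the whole argument turns; the odd-cycle cases, by contrast, reduce transparently to the centralizer computation and to the explicit single-block constructions.
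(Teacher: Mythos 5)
Your overall architecture matches the paper's: reduce to a block representative of one of the Types, use the centralizer criterion (Corollary \ref{CorEvenZof}) when $\Pof{\pp}$ is simple with all cycles odd, exhibit explicit odd renamings when $\Pof{\pp}$ is not simple (swapping two equal odd-length cycles gives an odd number of transpositions; a repeated even length is handled by a Form \ref{FormThreeAndTwos} block with $m=n$ and Lemma \ref{LemEvenCycles}), and invoke prefix insertions plus spin for the remaining simple case. The non-simple direction and the all-odd-cycles simple case are sound as you describe them.

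The gap is in the step you yourself flag as delicate, and as written it does not close. You argue that ``$\nu$ extends to a renaming of $\hat{\pp}$ realized by re-choosing the insertion rule; an odd $\nu$ would alter $\Spinof{\hat{\pp}}$, contradicting the invariance of spin in the Extended Rauzy Class.'' But spin is insensitive to renaming: if $\hat{\rr}=\hat{\qq}\circ\nu$ then $\Spinof{\hat{\rr}}=\Spinof{\hat{\qq}}$ automatically, for any $\nu$ whatsoever, so no contradiction can come from comparing these two pairs; and $\hat{\rr}$ need not lie in $\ExtClassLab{\hat{\qq}}$, so Proposition \ref{prop_spin} does not apply to it either. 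The paper's argument requires a third pair: choose a switch path $\omega$ from $\qq$ to $\rr=\qq\circ\nu$, lift it to $\hat{\omega}$ starting at $\hat{\qq}$, and call the result $\hat{\uu}$. Then (i) $\hat{\uu}\in\ExtClassLab{\hat{\qq}}$, so $\Spinof{\hat{\uu}}=\Spinof{\hat{\qq}}=\Spinof{\hat{\rr}}$ and $\Nof{\hat{\uu}}=\Nof{\hat{\qq}}*\mu_{\hat{\omega}}$ with $\mu_{\hat{\omega}}$ even; (ii) by Lemma \ref{LemSwitchAndInsert}, $\hat{\uu}$ is itself an admissible insertion on $\rr$, so the parity clause of Lemma \ref{LemCombinedInserts}, applied to the two insertions $\hat{\rr}$ and $\hat{\uu}$ on the \emph{same} base $\rr$ with equal spins, gives $\Nof{\hat{\rr}}=\Nof{\hat{\uu}}*\eta$ with $\eta$ even. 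Combining with $\Nof{\hat{\rr}}=\Nof{\hat{\qq}}*\nu$ yields $\nu=\mu_{\hat{\omega}}\cdot\eta\cdot\rho$ with $\rho\in\Zof{\hat{\qq}}\subseteq\Eperm{\AAA\cup\XXX}$ (the latter because $\hat{\qq}$ has a single odd cycle), hence $\nu$ is even. Without the lifted pair $\hat{\uu}$ and the check that it is again an admissible insertion, the parity clause of Lemma \ref{LemCombinedInserts} has nothing to compare, and the spin invariance you cite never enters the argument.
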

		  
		  \begin{proof}
		  		We may show this by cases, based on the Type in Proposition \ref{PropTypes}. We begin each case with a convenient choice of $\qq$ in the Extended Rauzy Class of $\pp$, remembering that $\Namings{\qq} = \Namings{\pp}$ by Lemma \ref{LemNamingsClass} and $\Pof{\qq}=\Pof{\pp}$ by Corollary \ref{CorPofInvariant}. We will show that $\Namings{\pp}$ contains an odd permutation if and only if $\Pof{\pp}$ has a repeated value.
		  		
		  		\textit{Type \ref{TypeHyp} (Hyperelliptic)}: Consider $\qq$ of the form
		  			$$ \qq = \mtrx{\LL{a}{z}\; \LL{b_1}{b_n}\;\LL{b_2}{b_{n-1}}\;\LL{\dots}{\dots}\;\LL{b_n}{b_1}\;\LL{s_1\dots s_k}{s_1\dots s_k} \;\LL{z}{a}},$$
		  			$n\geq 2$ and $k\geq 0$, in the Extended Rauzy Class of $\pp$. If $\Pof{\pp}$ has a repeated value, then either $k>1$ or $n$ is odd. By Lemma \ref{LemOR} or Lemma \ref{LemEmpty} along with Lemma \ref{LemCombiningBlocks} and \ref{LemCombiningGroups}, $\Namings{\pp} = \perm{\AAA}$.
		  			
		  			If $\Pof{\pp}$ has no repeated values, then $k\leq 1$ and $n$ is even. Then $\Zof{\qq}\subseteq\Eperm{\AAA}$, as it is generated by the cycles of odd order. By Corollary \ref{CorEvenZof}, $\Namings{\pp}\subseteq\Eperm{\AAA}$.
		  			
		  \textit{Type \ref{TypeEvenOdd} (Odd Cycles, Odd Spin)}: If $\Pof{\pp}$ has no repeated values, then $\Zof{\pp}\subseteq\Eperm{\AAA}$. Therefore $\Namings{\pp}\subset\Eperm{\AAA}$ by Corollary \ref{CorEvenZof}.
		  
		  	If $\Pof{\pp}$ has a repeated value $2\ell +1$, then there exists $\qq$ in the Extended Rauzy Class of $\pp$ of the form
		  		$$ \qq = \mtrx{\LL{a}{z}\;B\;\LL{b_1}{c_1}\;\LL{c_1}{b_1}\;\LL{\dots}{\dots}\;\LL{b_\ell}{c_\ell}\;\LL{c_\ell}{b_\ell}\;\LL{t}{t}\;\LL{d_1}{e_1}\;\LL{e_1}{d_1}\;\LL{\dots}{\dots}\;\LL{d_\ell}{e_\ell}\;\LL{e_\ell}{d_\ell}\;\LL{z}{a}},$$
		  		where $B$ is either an empty block or has the same last letter on each row, which we will call $s$. Then by performing a $\{t,z\}$-switch (and then an $\{s,z\}$-switch if $s$ exists, i.e. $B\neq \emptyset$), we arrive at
		  			$$ \mtrx{\LL{a}{t}\;B\;\LL{d_1}{e_1}\;\LL{e_1}{d_1}\;\LL{\dots}{\dots}\;\LL{d_\ell}{e_\ell}\;\LL{e_\ell}{d_\ell}\;\LL{z}{z}\;\LL{b_1}{c_1}\;\LL{c_1}{b_1}\;\LL{\dots}{\dots}\;\LL{b_\ell}{c_\ell}\;\LL{c_\ell}{b_\ell}\;\LL{t}{a}}.$$
		  	So $\nu = (b_1,d_1)(b_2,d_2)\dots(b_\ell,d_\ell)(c_1,e_1)(c_2,e_2)\dots(c_\ell,e_\ell)(t,z)$, an odd permutation, belongs to $\Namings{\pp}$. Therefore $\Namings{\pp}=\perm{\AAA}$.
		  	
		  	\textit{Type \ref{TypeTwosEven} (3/1 Cycles, Even Spin)}: Note that $\Pof{\pp} = \{3^n,1^m\}$ in this case ($n\geq 2$ and $m\geq 0$), so $3$ is always a repeated value. Because there exists $\qq$ in the Rauzy Class of $\pp$ that contains a length 5 block of Form \ref{FormOR}, $\Namings{\pp}=\perm{\AAA}$ by Lemma \ref{LemCombiningBlocks} and \ref{LemOR}.
		  	
		  	\textit{Type \ref{TypeEvenEven} (Odd Cycles, Even Spin)}: This case follows an almost identical argument to Type \ref{TypeEvenOdd}. We remark that if a block of Form \ref{FormFourAndTwos} (of length $L_1\geq 4$) and one of Form \ref{FormTwos} exists (of length $L_2\geq 4$) in $\qq\in\irr{\AAA}$ of this Type, then there exists $\tilde{\qq}$ in the Rauzy Class of $\qq$ that is identical except that the block of length $L_1$ is now Form \ref{FormTwos} and the block of length $L_2$ is now Form \ref{FormFourAndTwos}.
		  	
		  	\textit{Type \ref{TypeOdd} (Even Cycles)}: Suppose $\Pof{\pp}$ contains a repeated value $\ell$. If $\ell$ is odd, then we may find $\qq$ in the Extended Rauzy Class with two Form \ref{FormTwos} blocks of length $\ell-1$ as mentioned in the Type \ref{TypeEvenOdd} case, and so $\Namings{\pp}=\perm{\AAA}$. If $\ell$ is odd, we may find $\qq$ in the Extended Rauzy Class of $\pp$ that contains a block of Form \ref{FormThreeAndTwos} with $n=m=(\ell-1)/2$. By Lemma \ref{LemEvenCycles}, $\Namings{\pp}=\perm{\AAA}$.

			Now suppose $\Pof{\pp}$ is simple. Then we may find by Proposition \ref{PropSufficient} $\qq$ of the form in Lemma \ref{LemCombinedInserts}, or
				$$ \qq = \mtrx{\LL{a}{t_n}\;B_1\; \LL{s_1}{s_1}\; B_2\;\LL{s_2}{s_2}\;\dots\; B_m\;\LL{s_m}{s_m}\;C_1\;\LL{t_1}{t_1}\;\dots\; \LL{t_{n-1}}{t_{n-1}}\; C_n \;\LL{t_n}{a}},$$
			with sub-alphabets $\BBB_1$, $\dots$, $\BBB_m$, $\CCC_1^\pm$, $\dots$, $\CCC_n^{\pm}$ as defined in that lemma. We may further assume that
				\begin{equation}
					\#\BBB_j > \#\BBB_{j+1}, ~ \#\CCC_j^-> \#\CCC_j^+\mbox{ and }\#\CCC_j^+ > \#\CCC_{j+1}^-,
				\end{equation}
			as $\Pof{\qq}$ is simple. Note that these sub-alphabets are uniquely determined by the cycles of $\Nof{\qq}$.
			
			We fix $\XXX = \{x_1,\dots,x_{m+2n-1}\}$ of $m+2n-1$ symbols so that $\AAA\cap\XXX=\emptyset$, and we also fix choices $\overline{b} = (b_1,\dots,b_{m+2n-1})$ and $\overline{c} = (c_1,\dots,c_{m+2n-1})$ that satisfy the conditions in Lemma \ref{LemCombinedInserts}. Let $\hat{\qq} = \Ext{\overline{x}}{\overline{b}}{\overline{c}}{\qq}$, where $\overline{x} = (x_1,\dots,x_{m+2n-1})$.
			
			We will now show that $\Namings{\qq}\subseteq\Eperm{\AAA}$. Fix any $\nu \in \Namings{\qq}$ an let $\rr = \qq \circ \nu$ be the resulting pair in $\ExtClassLab{\qq}$. We define $\overline{b'} = (b_1',\dots,b_{m+2n-1}')$ and $\overline{c'}=(c_1',\dots,c_{m+2n-1}')$ to satisfy
				\begin{equation}
					b_j = \nu b_j'\mbox{ and } c_j = \nu c_j'.
				\end{equation}
			It follows then that $\overline{b'}$ and $\overline{c'}$ satisfy the conditions in Lemma \ref{LemCombinedInserts} for $\rr$. Let $\hat{\rr}:= \Ext{\overline{x}}{\overline{b'}}{\overline{c'}}{\rr}$. We note by choice of insertion rules that $\hat{\rr} = \hat{\qq}\circ \nu$ and so
				\begin{equation}\label{EqRandQ}
					\Nof{\hat{\rr}} = \Nof{\hat{\qq}}* \nu\mbox{ and } \Spinof{\hat{\rr}} = \Spinof{\hat{\qq}}.
				\end{equation}
			
			Because $\rr \in \ExtClassLab{\qq}$, we may choose a switch path $\omega$ from $\qq$ to $\rr$. We consider the lifted switch path $\hat{\omega}$ from $\hat{\qq}$ and call the resulting pair $\hat{\uu}$. We recall Proposition \ref{PropSpinDefined} and $\mu_{\hat{\omega}}\in\Eperm{\AAA\cup\XXX}$ from Definition \ref{DefMuOmega} to see that
				\begin{equation}\label{EqQandU}
					\Nof{\hat{\uu}} = \Nof{\hat{\qq}} * \mu_{\hat{\omega}}\mbox{ and }\Spinof{\hat{\uu}} = \Spinof{\hat{\qq}}.
				\end{equation}
				
			By following the computations in Lemma \ref{LemSwitchAndInsert}, we see that there exist a choice of $\overline{b''}= (b_1'',\dots,b_{m+2n-1}'')$ and $\overline{c''} = (c_1'',\dots,c_{m+2n-1}'')$ so that $\hat{\uu} = \Ext{\overline{x}}{\overline{b''}}{\overline{c''}}{\rr}$. Furthermore, we may verify that $\overline{b''}$ an $\overline{c''}$ satisfy the conditions of Lemma \ref{LemCombinedInserts}. Because $\Spinof{\hat{\uu}} = \Spinof{\hat{\rr}}$,
				\begin{equation}\label{EqRandU}
					\Nof{\hat{\rr}} = \Nof{\hat{\uu}} * \eta,~\eta \in \Eperm{\AAA\cup\XXX}
				\end{equation}
			by the final result in Lemma \ref{LemCombinedInserts}. We combine Equations \eqref{EqRandQ}--\eqref{EqRandU} to see that
				\begin{equation}
					\nu = \mu_{\hat{\omega}}\cdot \eta\cdot \rho,~\rho\in\Zof{\hat{\qq}}.
				\end{equation}
			Because $\Zof{\hat{\qq}}\subseteq\Eperm{\AAA\cup\XXX}$, $\nu$ is even.
		  \end{proof}

	\appendix
	
	\section{Algebra: Generators of the Symmetric and Alternating Groups}
	
			In this appendix, we provide lemmas related to generating elements of $\perm{\AAA}$ and $\Eperm{\AAA}$. We leave all proofs as exercises.

		  	\begin{lem}\label{LemEvenOddGenerators}
				If $\AAA=\{1,2,\dots,N\}$ and
					$$\nu_1 = (1,2,3\dots,N-1),\nu_2 = (2,3,\dots,N)\in \perm{\AAA},$$
				then
			      $$ <\nu_1,\nu_2> = \RHScase{\Eperm{\AAA}, & \mbox{if }N\mbox{ is even}, \\ \perm{\AAA}, & \mbox{if }N\mbox{ is odd}.}$$
			\end{lem}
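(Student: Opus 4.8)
The plan is to split the proof into a parity bookkeeping step and a generation step, reducing the whole statement to the single inclusion $\Eperm{\AAA}\subseteq\langle\nu_1,\nu_2\rangle$. First I would observe that $\nu_1$ and $\nu_2$ are each $(N-1)$-cycles, hence each has sign $(-1)^{N-2}=(-1)^{N}$. Thus for even $N$ both generators are even, so $\langle\nu_1,\nu_2\rangle\subseteq\Eperm{\AAA}$, whereas for odd $N$ both generators are odd. In either case, once $\Eperm{\AAA}\subseteq\langle\nu_1,\nu_2\rangle$ is established the lemma follows at once: for even $N$ we get equality with $\Eperm{\AAA}$, and for odd $N$ the group contains $\Eperm{\AAA}$ together with the odd element $\nu_1$, forcing it to be all of $\perm{\AAA}$. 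Throughout I would assume $N\geq 3$, which is the only case arising in the applications and the only case in which the generators are nontrivial.

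The engine of the generation step is a single explicit $3$-cycle. Using the convention $\mu\nu=\mu\circ\nu$, I would check letter by letter that $\nu_1\nu_2^{-1}=(1,2,N)$: the symbol $1$ maps to $2$, the symbol $2$ maps to $N$, the symbol $N$ maps back to $1$, and every other symbol is fixed. Conjugating this $3$-cycle by powers of $\nu_1$, and using that $\nu_1$ cyclically permutes $\{1,\dots,N-1\}$ while fixing $N$, yields $\nu_1^{k}(1,2,N)\nu_1^{-k}=(i,i+1,N)$, so the group contains every ``consecutive'' $3$-cycle through the symbol $N$.

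To finish I would show these consecutive $3$-cycles already generate $\Eperm{\AAA}$. The identity $(i+1,i+2,N)(i,i+1,N)=(i,i+2,N)$, applied inductively (together with inverses), produces every $3$-cycle $(a,b,N)$ with $a,b\in\{1,\dots,N-1\}$; and the identity $(a,b,N)(b,c,N)=(a,b,c)$ then produces every $3$-cycle of $\AAA$. Since $\Eperm{\AAA}$ is generated by its $3$-cycles for $N\geq 3$, this gives $\Eperm{\AAA}\subseteq\langle\nu_1,\nu_2\rangle$ and completes the argument. I expect the only delicate points to be getting the sign $(-1)^{N}$ correct in the parity step and the small-$N$ verifications ($N=3,4$), where the chains of identities become very short; the rest is routine cycle arithmetic that I would relegate to a direct check, consistent with the appendix's convention of leaving such computations as exercises.
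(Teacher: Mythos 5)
The paper offers no proof of this lemma --- the appendix explicitly leaves all three group-theoretic lemmas as exercises --- so there is no argument of the author's to compare against. Your proposal is a correct and complete proof: the sign computation $(-1)^{N-2}=(-1)^{N}$ for the two $(N-1)$-cycles is right, the product $\nu_1\nu_2^{-1}=(1,2,N)$ checks out under the paper's convention $\mu\nu=\mu\circ\nu$ (apply the right factor first), and the chain of identities $(i+1,i+2,N)(i,i+1,N)=(i,i+2,N)$ and $(a,b,N)(b,c,N)=(a,b,c)$ does yield all $3$-cycles, hence $\Eperm{\AAA}\subseteq\langle\nu_1,\nu_2\rangle$, from which both cases of the dichotomy follow by the index-two argument. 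The restriction to $N\geq 3$ is harmless (the statement is vacuously true for $N\leq 2$), and the whole argument is exactly the sort of routine verification the author intended to delegate to the reader.
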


		\begin{lem}\label{LemCycleTransposeGenerators}
			If $\AAA = \{1,\dots,N\}$, $\nu_1 = (1,2,\dots,N)$ and $\nu_2 = (1,2)$, then
				$$ <\nu_1,\nu_2> = \perm{\AAA}.$$
			If $N$ is odd, then
				$$ <\nu_1, \nu_2\nu_1\nu_2> = \Eperm{\AAA}.$$
		\end{lem}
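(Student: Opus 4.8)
The first identity is the classical fact that a full $N$-cycle together with an adjacent transposition generates the symmetric group, and I would prove it by the standard conjugation argument. Since $\nu_1$ sends $i\mapsto i+1$, conjugating $\nu_2=(1,2)$ by $\nu_1^{k}$ gives $\nu_1^{k}\nu_2\nu_1^{-k}=(1+k,2+k)$, with indices reduced into $\{1,\dots,N\}$; letting $k$ run from $0$ to $N-2$ produces every adjacent transposition $(j,j+1)$. All of these lie in $\langle\nu_1,\nu_2\rangle$, and since any transposition $(i,j)$ is a product of adjacent transpositions and transpositions generate $\perm{\AAA}$, we obtain $\langle\nu_1,\nu_2\rangle=\perm{\AAA}$.

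For the second identity I would first establish the easy inclusion $\langle\nu_1,\nu_2\nu_1\nu_2\rangle\subseteq\Eperm{\AAA}$: an $N$-cycle is a product of $N-1$ transpositions, so it has sign $(-1)^{N-1}=+1$ when $N$ is odd, whence $\nu_1$ is even, and $\nu_2\nu_1\nu_2$ is a conjugate of $\nu_1$ and therefore also even. The reverse inclusion carries the content. Using the convention $\mu\cdot\nu=\mu\circ\nu$, one computes $\nu_2\nu_1\nu_2=(2,1,3,4,\dots,N)$, and then a short direct calculation gives $\nu_1^{-1}\cdot(\nu_2\nu_1\nu_2)=(1,2,N)$, so the group contains a $3$-cycle. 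Conjugating this $3$-cycle by $\nu_1$ yields $(1,2,3)$, and conjugating $(1,2,3)$ by $\nu_1^{k}$ yields $(1+k,2+k,3+k)$; in particular $\langle\nu_1,\nu_2\nu_1\nu_2\rangle$ contains every consecutive $3$-cycle $(i,i+1,i+2)$ with $1\le i\le N-2$.

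It then remains to show that these consecutive $3$-cycles generate $\Eperm{\AAA}$, which I regard as the main step. I would argue that each $3$-cycle $(1,2,k)$, $3\le k\le N$, lies in the group by induction on $k$, expressing $(1,2,k+1)$ as a short product of the available consecutive $3$-cycles and previously obtained $(1,2,j)$; since the family $\{(1,2,k):3\le k\le N\}$ is a standard generating set for $\Eperm{\AAA}$, this yields $\langle\nu_1,\nu_2\nu_1\nu_2\rangle=\Eperm{\AAA}$. The only genuine obstacle is the careful bookkeeping of these cycle products under the left-composition convention; the group-theoretic skeleton, namely that adjacent transpositions generate $\perm{\AAA}$ and consecutive $3$-cycles generate $\Eperm{\AAA}$, is classical and may be cited.
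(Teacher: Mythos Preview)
Your argument is correct: the conjugation producing all adjacent transpositions for the first claim, and the computation $\nu_1^{-1}(\nu_2\nu_1\nu_2)=(1,2,N)$ followed by conjugation to obtain all consecutive $3$-cycles for the second, are both valid under the paper's composition convention. The paper itself offers no proof of this lemma---the appendix explicitly states ``We leave all proofs as exercises''---so there is nothing to compare against; your write-up is exactly the sort of standard verification the authors had in mind.
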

	    
		\begin{lem}\label{LemCombiningGroups}
		      Suppose $\BBB,\CCC$ are each alphabets on at least $3$ letters and $\BBB\cap\CCC\neq \emptyset$. If $\AAA = \BBB\cup \CCC$, then
			  $$ <\Eperm{\BBB},\Eperm{\CCC}> = \Eperm{\AAA} \mbox{ and } <\Eperm{\BBB},\perm{\CCC}> = \perm{\AAA}.$$
		\end{lem}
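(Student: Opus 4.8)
The plan is to reduce both equalities to two classical facts about alternating groups: that $\Eperm{\AAA}$ is generated by its $3$-cycles, and that it is built up from a smaller alternating group by "absorbing" one new letter at a time. The forward inclusions $\langle\Eperm{\BBB},\Eperm{\CCC}\rangle\subseteq\Eperm{\AAA}$ and $\langle\Eperm{\BBB},\perm{\CCC}\rangle\subseteq\perm{\AAA}$ are immediate, since each generator fixes the letters outside its own alphabet and acts evenly (resp. arbitrarily) on the rest, hence is even (resp. arbitrary) in the ambient group; so all the work lies in the reverse inclusions.

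First I would isolate an absorption lemma: if $G\supseteq\Eperm{\mathcal{D}}$ with $\#\mathcal{D}\geq 3$ and $G$ contains a $3$-cycle $(x,y,z)$ with $x,y\in\mathcal{D}$ and $z\notin\mathcal{D}$, then $G\supseteq\Eperm{\mathcal{D}\cup\{z\}}$. This is the standard generation statement: conjugating $(x,y,z)$ by $\Eperm{\mathcal{D}}$, which is $2$-transitive on $\mathcal{D}$ when $\#\mathcal{D}\geq 4$ and transitive when $\#\mathcal{D}=3$, yields enough $3$-cycles through $z$ to produce all of $\Eperm{\mathcal{D}\cup\{z\}}$. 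The only delicate point is the base case $\#\mathcal{D}=3$, which amounts to the elementary verification that two $3$-cycles sharing an edge generate $\Eperm{\{x,y,z,t\}}$.

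For the first equality I would grow from $\Eperm{\CCC}\subseteq G$ and absorb the letters of $\BBB\setminus\CCC$ one at a time, after fixing some $w\in\BBB\cap\CCC$; if $\BBB\subseteq\CCC$ there is nothing to absorb and we are already done. The crux is producing the first, "seed," mixed $3$-cycle. When $\#(\BBB\cap\CCC)\geq 2$ this is free: choosing $w,w'\in\BBB\cap\CCC$ and $b\in\BBB\setminus\CCC$, the cycle $(w,w',b)\in\Eperm{\BBB}\subseteq G$ already meets $\CCC$ in two letters, so absorption applies. The genuinely nontrivial case, and the main obstacle, is the minimal overlap $\BBB\cap\CCC=\{w\}$, where no single $3$-cycle coming from either $\Eperm{\BBB}$ or $\Eperm{\CCC}$ meets $\CCC$ in two points while reaching into $\BBB\setminus\CCC$. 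Here I would manufacture one by conjugation: pick distinct $b_1,b_2\in\BBB\setminus\{w\}$ and distinct $c_1,c_2\in\CCC\setminus\{w\}=\CCC\setminus\BBB$, and set $\sigma=(w,b_1,b_2)\in\Eperm{\BBB}$, $\tau=(w,c_1,c_2)\in\Eperm{\CCC}$; then $\sigma\tau\sigma^{-1}=(b_1,c_1,c_2)\in G$ is a $3$-cycle meeting $\CCC$ in $c_1,c_2$ and introducing $b_1\in\BBB\setminus\CCC$. Absorbing $b_1$ gives $G\supseteq\Eperm{\CCC\cup\{b_1\}}$, and every remaining $b\in\BBB\setminus\CCC$ is then absorbed using the in-block cycle $(b_1,b,w)\in\Eperm{\BBB}$, whose letters $b_1,w$ already lie in the current $\mathcal{D}$. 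Exhausting $\BBB\setminus\CCC$ yields $\mathcal{D}=\AAA$ and hence $\Eperm{\AAA}\subseteq G$.

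The second equality will then follow quickly. By the first part, $\langle\Eperm{\BBB},\perm{\CCC}\rangle$ contains $\langle\Eperm{\BBB},\Eperm{\CCC}\rangle=\Eperm{\AAA}$, and it also contains a transposition $(c,c')\in\perm{\CCC}$, which is odd in $\perm{\AAA}$; since $\Eperm{\AAA}$ has index $2$, any subgroup strictly containing it equals $\perm{\AAA}$. Thus the entire argument hinges on the single conjugation computation handling the case $\#(\BBB\cap\CCC)=1$, with everything else being routine application of the absorption lemma.
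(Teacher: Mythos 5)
Your proof is correct, and in fact the paper offers nothing to compare it against: all three appendix lemmas, including this one, are explicitly left as exercises. Your argument is the standard one and handles the only genuinely delicate points correctly --- the base case $\#\mathcal{D}=3$ of the absorption lemma (where $\Eperm{\mathcal{D}}$ is merely transitive, not $2$-transitive, and one falls back on the fact that two $3$-cycles sharing two letters generate the alternating group on four letters) and the minimal-overlap case $\BBB\cap\CCC=\{w\}$, where the conjugation $\sigma\tau\sigma^{-1}=(b_1,c_1,c_2)$ supplies the seed $3$-cycle that neither $\Eperm{\BBB}$ nor $\Eperm{\CCC}$ contains on its own. The deduction of the second equality from the first via the index-$2$ property of $\Eperm{\AAA}$ is likewise complete.
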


	\section{Switch Moves}
	
		In this appendix, we provide the combinatorial work mentioned in the main sections. These utility results follow in format to those in \cite{cFick2012}. As in that work, we will speak of \emph{projection} onto sub-alphabets $\AAA'\subsetneq \AAA$. In particular, suppose
				\begin{equation}
					\pp = \mtrx{\LL{a}{z}\;B_1\;\LL{s}{s}\;B_2\;\LL{z}{a}}
				\end{equation}
		and let $\BBB_j$ be the letters in block $B_j$, $j\in\{1,2\}$. By considering $\pp$ projected on $\AAA\setminus\big(\BBB_j\cup \{s\}\big)$, $j\in\{1,2\}$, we arrive at $\pp^{(j)}$ given by
				\begin{equation}
					\pp^{(1)} = \mtrx{\LL{a}{z}\;B_2\;\LL{z}{a}} \mbox{ and }\pp^{(2)} = \mtrx{\LL{a}{z}\;B_1\;\LL{z}{a}}.
				\end{equation}
		We may act on either $\pp^{(j)}$ by (inner) switch moves to arrive at $q^{(j)}\in\ClassLab{\pp^{(j)}}$ of the form
				\begin{equation}
					\qq^{(1)} = \mtrx{\LL{a}{z}\;B_2'\;\LL{z}{a}} \mbox{ and }\qq^{(2)} = \mtrx{\LL{a}{z}\;B_1'\;\LL{z}{a}}
				\end{equation}
		for blocks $B_j'$. As shown in \cite{cFick2012}, the corresponding lifts
				\begin{equation}
					\mtrx{\LL{a}{z}\;B_1\;\LL{s}{s}\;B_2'\;\LL{z}{a}}\mbox{ and }\mtrx{\LL{a}{z}\;B_1'\;\LL{s}{s}\;B_2\;\LL{z}{a}}
				\end{equation}
		belong to $\ClassLab{\pp}$, as they may be achieved by following the same switch moves as in the projections (followed a correcting switch). This fact will be used freely in these proofs.

	    \begin{lem}\label{LemMovesTwosInTwo2}
		  If $\tilde{\pp}\in\irr{\AAA}$ is of the form
		      $$ \tilde{\pp} = \mtrx{\LL{a}{z}\;\LL{b}{c}\;\LL{*}{b}\;\LL{c}{*}\;\LL{z}{a}}$$
		  where
		      $$ \left[\LL{*}{*}\right] = \left[\LL{d_1}{e_1}\;\LL{e_1}{d_1}\;\LL{\dots}{\dots}\;\LL{d_n}{e_n}\;\LL{e_n}{d_n}\right],$$
		  for $n\geq 1$, then $\pp$ exists in the Rauzy Class of $\tilde{\pp}$ of the form
		      $$ \pp = \mtrx{\LL{a}{z}\;\LL{e_n}{c}\;\LL{c}{e_n}\;\LL{e_{n-1}}{d_n}\;\LL{d_n}{e_{n-1}}\;\LL{\dots}{\dots}\;\LL{e_1}{d_2}\;\LL{d_2}{e_1}\;\LL{b}{d_1}\;\LL{d_1}{b}\;\LL{z}{a}}.$$
	    \end{lem}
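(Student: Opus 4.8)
\documentclass[10pt]{amsart}

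The final statement to prove is Lemma~\ref{LemMovesTwosInTwo2}, which asserts that a specific pair $\tilde{\pp}$ of Form~\ref{FormThreeAndTwos} (with $m=0$, a single odd ``three'' block $\LL{b}{c}\,\LL{\ast}{b}\,\LL{c}{\ast}$ interleaved with pairs of Form~\ref{FormTwos}) is Rauzy-equivalent to an explicit target pair $\pp$ in which the letters have been rearranged into a clean alternating list. Since this is a purely combinatorial identity inside a single labeled Rauzy Class, the natural approach is a constructive one: exhibit an explicit sequence of (right) Rauzy Induction moves, or equivalently inner switch moves, carrying $\tilde{\pp}$ to $\pp$.

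\textbf{Plan of attack.} My plan is to proceed by induction on $n$, peeling off one $\LL{d_i}{e_i}\,\LL{e_i}{d_i}$ pair at a time from the inside of the block and transporting it to its final position. First I would treat the base case $n=1$ by direct computation: starting from $\tilde{\pp}=\mtrx{\LL{a}{z}\,\LL{b}{c}\,\LL{d_1}{e_1}\,\LL{e_1}{d_1}\,\LL{c}{b}\,\LL{z}{a}}$, I would write out a short explicit chain of type-$0$ and type-$1$ right-induction moves (using that $z_\eps=p_\eps^{-1}(N)$ drives each step) and verify it lands on $\pp=\mtrx{\LL{a}{z}\,\LL{e_1}{c}\,\LL{c}{e_1}\,\LL{b}{d_1}\,\LL{d_1}{b}\,\LL{z}{a}}$. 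For the inductive step, I would use the projection technique described in the preamble to this appendix: project onto the sub-alphabet that isolates the outermost structure, apply the already-known rearrangement to the inner $n-1$ sized sub-block, lift the moves back (with the correcting switch that the appendix guarantees stays inside $\ClassLab{\pp}$), and finally perform one more explicit block of moves to seat the last pair $\LL{e_n}{c}\,\LL{c}{e_n}$ at the front and push $\LL{b}{d_1}\,\LL{d_1}{b}$ to the back.

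An alternative, possibly cleaner, route is to recognize the target as a standard normal form and invoke the switch-move machinery directly: since both $\tilde{\pp}$ and $\pp$ share the same $\Sof{\cdot}$ (one can check both yield the cycle structure forcing a single length-$(2n+1)$ cycle together with the marked fixed block), they lie in a common Rauzy Class by the invariance results, so it would suffice to certify that the specific permuted arrangement $\pp$ is actually \emph{standard} and of Form~\ref{FormThreeAndTwos}, and then appeal to the proposition that standard pairs with equal $\Sof{\cdot}$ and equal Type in the same class are connected by inner switches. However, this shortcut only shows \emph{existence} of $\pp$ in the class up to its combinatorial type, whereas the lemma demands the precise labeled pair; so I expect the constructive induction to be the safer path.

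\textbf{Main obstacle.} The hard part will be bookkeeping the index reversal in the target: the inner pairs appear in $\tilde{\pp}$ in the order $d_1,e_1,\dots,d_n,e_n$ but in $\pp$ in the reversed-and-shifted order $e_n,c,c,e_n,e_{n-1},d_n,\dots,b,d_1,d_1,b$, so the induction hypothesis must be stated with enough care that the ``correcting switch'' from the projection lift reproduces exactly this interleaving rather than a permuted variant. Verifying that the lifted moves do not disturb the already-placed letters — i.e.\ that each induction step's $z_\eps$ never points into the frozen prefix — is the delicate, if routine, check; I would organize it by always keeping the pair being moved adjacent to the active end $z$ of the rows, so that a bounded number of moves suffices per pair and the total move count is $O(n)$.

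\end{document}
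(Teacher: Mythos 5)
Your overall strategy --- induction on $n$, explicit inner switches, and the projection/lifting technique from this appendix --- is the same as the paper's, and you are right to discard the $\Sof{\cdot}$-invariance shortcut, since it only locates a pair of the correct combinatorial type rather than the exact labeled pair. However, there is a concrete gap in the order of operations you propose. You plan to \emph{first} project onto a sub-alphabet isolating the ``outermost structure,'' apply the induction hypothesis to the inner sub-block, and then lift. But the lifting statement in this appendix is only available for projections along genuine blocks, i.e.\ sub-alphabets occupying the same set of positions in both rows; in $\tilde{\pp}$ no such decomposition exists (for instance $\{d_n,e_n\}$ sits in positions $\{2n+1,2n+2\}$ of the top row but $\{2n+2,2n+3\}$ of the bottom row, and $b,c$ interlock with the starred block), so the projection you want to perform is not licensed at the outset.

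The paper's proof resolves exactly this point, and it is the entire content of the argument: one first performs a $\{b,d_n\}$-switch followed by a $\{c,e_n\}$-switch on $\tilde{\pp}$, arriving at
$$ \mtrx{\LL{a}{z}\;\LL{c}{d_n}\;\LL{*'}{c}\;\LL{d_n}{*'}\;\LL{b}{e_n}\;\LL{e_n}{b}\;\LL{z}{a}}, $$
in which $\{b,e_n\}$ \emph{does} form a block at the right end. Projecting it away then yields precisely the hypothesis of Lemma \ref{LemMovesTwosInTwo2} with $n-1$ in place of $n$ and with $(c,d_n)$ playing the roles of $(b,c)$ --- this substitution is also what absorbs the index-reversal bookkeeping you flag as the main obstacle. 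Your proposal never identifies these two switches, nor the fact that they must precede (not follow) the projection, so as written the inductive step does not go through. The base case and the $O(n)$ move count are fine, but the proof is incomplete without exhibiting the switches that create a projectable block.
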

	    
	    \begin{proof}
	    	This may be proved by induction on $n$, and we will provide the inductive step. We act on
	    		$$ \tilde{\pp} = \mtrx{\LL{a}{z}\;\LL{b}{c}\;\LL{*'}{b}\;\LL{d_n}{*'}\;\LL{e_n}{e_n}\;\LL{c}{d_n}\;\LL{z}{a}}$$
	    	by a $\{b,d_n\}$-switch followed by a $\{c,e_n\}$-switch to arrive at
	    		$$ \mtrx{\LL{a}{z}\;\LL{c}{d_n}\;\LL{*'}{c}\;\LL{d_n}{*'}\;\LL{b}{e_n}\;\LL{e_n}{b}\;\LL{z}{a}}$$
	    	which, after projecting onto $\AAA\setminus\{b,e_n\}$, is of the form in the lemma with $n-1$.
	    \end{proof}

	    \begin{lem}\label{LemMovesTwosInTwo}
		  If $\tilde{\pp}\in\irr{\AAA}$ is of the form
		      $$ \tilde{\pp} = \mtrx{\LL{a}{z}\;\LL{b}{*}\;\LL{*}{c}\;\LL{c}{b}\;\LL{z}{a}}$$
		  where
		      $$ \left[\LL{*}{*}\right] = \left[\LL{d_1}{e_1}\;\LL{e_1}{d_1}\;\LL{\dots}{\dots}\;\LL{d_n}{e_n}\;\LL{e_n}{d_n}\right],$$
		  for $n\geq 1$, then $\pp$ exists in the Rauzy Class of $\tilde{\pp}$ of the form
		      $$ \pp = \mtrx{\LL{a}{z}\;\LL{e_n}{c}\;\LL{c}{e_n}\;\LL{e_{n-1}}{d_n}\;\LL{d_n}{e_{n-1}}\;\LL{\dots}{\dots}\;\LL{e_1}{d_2}\;\LL{d_2}{e_1}\;\LL{b}{d_1}\;\LL{d_1}{b}\;\LL{z}{a}}.$$
	    \end{lem}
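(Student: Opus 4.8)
The goal is to prove Lemma~\ref{LemMovesTwosInTwo}, which is nearly identical to Lemma~\ref{LemMovesTwosInTwo2}: both start from a standard pair containing the letters $b,c$ together with an embedded Form~\ref{FormTwos} sub-block $\left[\LL{*}{*}\right]$, and both conclude that the \emph{same} target pair $\pp$ exists in the Rauzy Class. The two lemmas differ only in the initial arrangement of $b$ and $c$ relative to the inserted block: in Lemma~\ref{LemMovesTwosInTwo2} the pattern reads $\LL{b}{c}\;\LL{*}{b}\;\LL{c}{*}$, whereas here it reads $\LL{b}{*}\;\LL{*}{c}\;\LL{c}{b}$. The plan is therefore to reduce Lemma~\ref{LemMovesTwosInTwo} to Lemma~\ref{LemMovesTwosInTwo2} by exhibiting a short sequence of (inner) switch moves that carries the starting pair of this lemma into the starting pair of the previous one, after which the conclusion is immediate.

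First I would write out the starting pair explicitly with the block expanded,
$$ \tilde{\pp} = \mtrx{\LL{a}{z}\;\LL{b}{d_1}\;\LL{d_1}{e_1}\;\LL{e_1}{d_1}\;\LL{\dots}{\dots}\;\LL{d_n}{e_n}\;\LL{e_n}{c}\;\LL{c}{b}\;\LL{z}{a}},$$
paying attention to how $b$ and $c$ interleave with the endpoints $d_1$ and $e_n$ of the block. The key step is to apply a switch move pairing $c$ with the neighboring block letter (a $\{c,e_n\}$-switch, say) and, if needed, a companion switch on $b$ with $d_1$, using the projection formalism set up at the start of this appendix: by projecting onto $\AAA\setminus\{b,c\}$ we recover a pure Form~\ref{FormTwos} block whose internal switches lift back to $\ClassLab{\tilde{\pp}}$, so the only genuine bookkeeping is tracking the positions of $b$ and $c$. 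I expect that one or two such switches realign the pattern into exactly $\LL{b}{c}\;\LL{*}{b}\;\LL{c}{*}$, i.e.\ the hypothesis form of Lemma~\ref{LemMovesTwosInTwo2}.

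Having reached a pair in $\ClassLab{\tilde{\pp}}$ of the form required by Lemma~\ref{LemMovesTwosInTwo2}, I would invoke that lemma directly to obtain the stated $\pp$ in the Rauzy Class, and conclude by noting that $\ClassLab{\cdot}$ is an equivalence class, so $\pp$ lies in the Rauzy Class of the original $\tilde{\pp}$. Alternatively, if the realignment is awkward, one can argue by the same induction on $n$ used in Lemma~\ref{LemMovesTwosInTwo2}: peel off the outermost matched pair $d_1,e_1$ (or $d_n,e_n$) with a pair of switches, project onto the smaller alphabet, and apply the inductive hypothesis, checking the base case $n=1$ by hand.

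The main obstacle I anticipate is purely clerical rather than conceptual: ensuring that the intermediate switch moves are \emph{legal}, meaning each one acts on a genuinely standard pair with the requisite $\{b,c\}$ or block-letter pattern actually present at that stage, and that the letters $b$ and $c$ end up in precisely the slots demanded by Lemma~\ref{LemMovesTwosInTwo2} (not merely a superficially similar arrangement). Because both lemmas target the identical $\pp$, the safest route is the reduction argument, so that all the delicate cycle-tracking is outsourced to the already-established Lemma~\ref{LemMovesTwosInTwo2}; the burden here is then only to verify the handful of opening switches, which is a direct computation of the type carried out throughout this appendix.
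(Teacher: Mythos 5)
Your primary route --- realigning $\tilde{\pp}$ into the hypothesis form of Lemma~\ref{LemMovesTwosInTwo2} with one or two switches and then quoting that lemma --- breaks down at its first step. In $\tilde{\pp}$ the letter $b$ sits at position $2$ of the top row and position $\#\AAA-1$ of the bottom row, so a letter $y$ with $p_0(b)<p_0(y)$ and $p_1(b)<p_1(y)$ would have to be $a$, and a letter $x$ with $p_0(x)<p_0(b)$ would have to be $a$ as well; consequently \emph{no inner switch involving $b$ is legal on $\tilde{\pp}$}, and the ``companion switch on $b$ with $d_1$'' you propose does not exist. (Your expanded bottom row, which begins $z\,d_1\,e_1\,d_1\cdots$, is also mis-transcribed.) The one legal opening move you name, the $\{e_n,c\}$-switch, produces
$$\mtrx{\LL{a}{z}\;\LL{c}{d_n}\;\LL{b}{c}\;\LL{*'}{*'}\;\LL{d_n}{e_n}\;\LL{e_n}{b}\;\LL{z}{a}},\qquad \bmtrx{\LL{*'}{*'}} = \bmtrx{\LL{d_1}{e_1}\;\LL{e_1}{d_1}\;\LL{\dots}{\dots}\;\LL{d_{n-1}}{e_{n-1}}\;\LL{e_{n-1}}{d_{n-1}}},$$
which is not of the form $\LL{b'}{c'}\,\LL{*}{b'}\,\LL{c'}{*}$: the top row ends in $e_n$ while the bottom row begins with $d_n$. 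Moreover, even if some short path did land on a pair satisfying the hypothesis of Lemma~\ref{LemMovesTwosInTwo2}, the letters would necessarily occupy permuted roles (since $b$ is locked until other switches free it, it cannot end up playing the role of that lemma's ``$b$''), so the lemma would return the target pattern written in the wrong letters, and you would still owe a computation identifying it with the specific $\pp$ asserted here.

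The workable route is your fallback, which is what the paper intends by saying the proof is ``very similar'' to that of Lemma~\ref{LemMovesTwosInTwo2}: induct on $n$, removing one $\LL{x}{y}\,\LL{y}{x}$ column pair per step and projecting onto the smaller alphabet. But ``the same induction'' still requires you to re-derive the opening switches, because the $\{b,d_n\}$-switch that starts the proof of Lemma~\ref{LemMovesTwosInTwo2} is likewise illegal here. For $n=1$ a single $\{d_1,c\}$-switch already yields the stated $\pp$ exactly; for $n\geq 2$ one can begin with the $\{e_n,c\}$-switch displayed above, observe that the projection onto $\AAA\setminus\{b,e_n\}$ is precisely the hypothesis of Lemma~\ref{LemMovesTwosInTwo2} with $n-1$ and with $(b,c)$ replaced by $(c,d_n)$, and then verify that the lifted switch path returns $b$ and $e_n$ to the columns $\LL{b}{d_1}\,\LL{d_1}{b}$ and $\LL{e_n}{c}\,\LL{c}{e_n}$ demanded by the stated $\pp$. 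None of this is carried out in the proposal, so as written there is a genuine gap.
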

	    
	    \begin{proof}
	    	The proof is very similar to that of Lemma \ref{LemMovesTwosInTwo2}.
	    \end{proof}

	    \begin{lem}\label{LemMovesToThreeAndTwos}
		  If $\tilde{\pp}\in\irr{\AAA}$ is of the form
		      $$ \tilde{\pp} = \mtrx{\LL{a}{z}\;\LL{b}{d}\;\LL{c}{*}\;\LL{d}{c}\;\LL{e}{b}\;\LL{*}{f}\;\LL{f}{e}\;\LL{z}{a}}$$
		  where
		      $$ \left[\LL{*}{*}\right] = \left[\LL{g_1}{h_1}\;\LL{h_1}{g_1}\;\LL{\dots}{\dots}\;\LL{g_n}{h_n}\;\LL{h_n}{g_n}\right],$$
		  for $n\geq 1$, then $\pp$ exists in the Rauzy Class of $\tilde{\pp}$ of the form
		      $$ \pp = \mtrx{\LL{a}{z}\;\LL{b}{d}\;\LL{c}{c}\;\LL{d}{b}\;\LL{h_n}{f}\;\LL{f}{h_n}\;\LL{h_{n-1}}{g_n}\;\LL{g_n}{h_{n-1}}\;\LL{\dots}{\dots}\;
			    \LL{h_1}{g_2}\;\LL{g_2}{h_1}\;\LL{e}{g_1}\;\LL{g_1}{e}\;\LL{z}{a}}.$$
	    \end{lem}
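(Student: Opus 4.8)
The plan is to argue by induction on $n$, in exact parallel with the proofs of Lemmas~\ref{LemMovesTwosInTwo2} and~\ref{LemMovesTwosInTwo}. Since the claim asserts only that $\pp$ lies in the (labeled) Rauzy Class of $\tilde\pp$, it suffices to produce a path of inner switch moves (equivalently, right Rauzy Induction) from $\tilde\pp$ to $\pp$, and I may freely use the projection-and-lift mechanism recorded at the start of this appendix: a switch performed on a pair restricted to a sub-alphabet lifts to the full pair. For the base case $n=1$ the two pairs live on a nine-letter alphabet, and I would simply exhibit the short sequence of switches carrying $\tilde\pp$ to $\pp$, checking that each chosen pair of letters is strictly increasing on both rows so that the move is legal.

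For the inductive step the guiding observation is that removing the outermost two-block of the target, the block on $\{h_n,f\}$, turns $\pp$ into the $(n-1)$ target in which $g_n$ has taken over the structural role of $f$ (with $e$ and the remaining $g_i,h_i$ unchanged). Accordingly I would split the final pair $(g_n,h_n)$ off the displayed block of $\tilde\pp$ and, by analogy with the $\{b,d_n\}$- and $\{c,e_n\}$-switches of Lemma~\ref{LemMovesTwosInTwo2}, apply a short sequence of switches involving $f$, $e$, $g_n$ and $h_n$ that creates the clean two-block $\{h_n,f\}$ and installs $g_n$ in the position formerly held by $f$. Projecting onto $\AAA\setminus\{h_n,f\}$ should then yield exactly the hypothesis pair for $n-1$ under the substitution $f\mapsto g_n$---note that this sub-alphabet is precisely $\{a,b,c,d,e,z\}\cup\{g_1,\dots,g_n,h_1,\dots,h_{n-1}\}$, the alphabet of that smaller instance---so the induction hypothesis applies, and lifting back through the fixed two-block $\{h_n,f\}$ reassembles $\pp$.

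The main obstacle is entirely in the bookkeeping. I must pin down the exact legal switch sequence in the inductive step, verify by tracking positions on both rows that each intermediate switch satisfies its strict-inequality hypothesis, and confirm that the projected pair is \emph{literally} the $n-1$ instance rather than merely something of the same shape. A secondary subtlety is that the three-part $\LL{b}{d}\LL{c}{c}\LL{d}{b}$ of the target is \emph{not} already present in $\tilde\pp$: in the source the block is entangled between $d$ and $c$ along the bottom row, so the switches must disentangle it, and I would need to check that this untangling deposits $b$, $c$, $d$ in the required clean pattern while leaving the lift of $\{h_n,f\}$ in the correct outermost slot. All of this follows the template already established in the two preceding lemmas, so the work is verification rather than the discovery of a new idea.
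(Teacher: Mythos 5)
Your overall template---induction on $n$ plus the projection-and-lift mechanism---is the paper's, but you peel from the wrong end, and this is not a mere bookkeeping inconvenience. You propose to split off the pair $(g_n,h_n)$, form the clean two-block on $\{h_n,f\}$, and freeze it while the induction hypothesis acts on the complement. In the target $\pp$ that block occupies positions $5$ and $6$, i.e.\ it sits in the \emph{interior} of the pair, wedged between the $\LL{b}{d}\LL{c}{c}\LL{d}{b}$ part (which still has to be untangled) and the reorganized $g$--$h$ part; and in the bottom row of the $(n-1)$ source those positions lie in the middle of the alternating $h_1,g_1,h_2,g_2,\dots$ pattern. The projection-and-lift tool recorded at the start of this appendix only freezes a sub-block that is either isolated by an $\LL{s}{s}$ separator or parked against a boundary column, with all subsequent switches confined to one side of it; it does not justify holding an interior block fixed while switches rearrange letters on \emph{both} sides. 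So even granting that you could reach an intermediate pair with $\{h_n,f\}$ blocked at positions $5,6$ and with projection equal to the $(n-1)$ source under $f\mapsto g_n$ (which you do not exhibit, and which already requires threading $f$ and $h_n$ deep into the $g$--$h$ pattern on the bottom row), the lift of the inductive switch path is not licensed by anything available in the paper. That, together with the fact that no explicit switch sequence is given for either the base case or the inductive step, is a genuine gap: the switch sequence \emph{is} the content of the lemma.

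The paper peels from the other end. Writing $\tilde\pp$ with $(g_1,h_1)$ split off, it performs an $\{f,g_1\}$-switch followed by a $\{d,f\}$-switch, arriving at
$$ \mtrx{\LL{a}{z}\;\LL{b}{d}\;\LL{c}{*'}\;\LL{d}{c}\;\LL{h_1}{b}\;\LL{*'}{f}\;\LL{f}{h_1}\;\LL{e}{g_1}\;\LL{g_1}{e}\;\LL{z}{a}}. $$
Here the clean two-block $\LL{e}{g_1}\LL{g_1}{e}$ sits directly against the final $\LL{z}{a}$ column---exactly where it appears in the target and exactly where freezing it is legitimate---and projecting onto $\AAA\setminus\{e,g_1\}$ yields \emph{literally} the hypothesis pair for $n-1$ with $h_1$ in the role of $e$ and $g_{j+1},h_{j+1}$ in the roles of $g_j,h_j$ (in particular the $b,c,d$ entanglement is untouched and is resolved by the induction hypothesis, not by the inductive step). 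If you want to keep your outer-peeling idea you would need either a separate argument that an interior block can be frozen, or additional switch paths after the lift to transport $\{h_n,f\}$ into positions $5,6$; the inner-peeling order avoids both problems.
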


	    \begin{proof}
		We perform two switch moves on
		      $$ \tilde{\pp} = \mtrx{\LL{a}{z}\;\LL{b}{d}\;\LL{c}{h_1}\;\LL{d}{g_1}\;\LL{e}{*'}\;\LL{g_1}{c}\;\LL{h_1}{b}\;\LL{*'}{f}\;\LL{f}{e}\;\LL{z}{a}}$$
		and leave the rest of the proof to follow by induction. We perform an $\{f,g_1\}$-switch followed by a $\{d,f\}$-switch to arrive at
		      $$ \mtrx{\LL{a}{z}\;\LL{b}{d}\;\LL{c}{*'}\;\LL{d}{c}\;\LL{h_1}{b}\;\LL{*'}{f}\;\LL{f}{h_1}\;\LL{e}{g_1}\;\LL{g_1}{e}\;\LL{z}{a}}$$
		which, by projecting to $\AAA\setminus\{e,g_1\}$, is a permutation of the same form as in the lemma with $n-1$ rather than $n$.
	    \end{proof}

	    \begin{lem}\label{LemMovestoTwosAndThree}
		  If $\tilde{\pp}\in\irr{\AAA}$ is of the form
		      $$ \tilde{\pp} = \mtrx{\LL{a}{z}\;\LL{b}{*}\;\LL{c}{d}\;\LL{*}{c}\;\LL{d}{b}\;\LL{z}{a}}$$
		  where
		      $$ \left[\LL{*}{*}\right] = \left[\LL{e_1}{f_1}\;\LL{f_1}{e_1}\;\LL{\dots}{\dots}\;\LL{e_n}{f_n}\;\LL{f_n}{e_n}\right],$$
		  for $n\geq 1$, then $\pp$ exists in the Rauzy Class of $\tilde{\pp}$ of the form
		      $$ \pp = \mtrx{\LL{a}{z}\;\LL{f_n}{d}\;\LL{d}{f_n}\;\LL{f_{n-1}}{e_n}\LL{e_n}{f_{n-1}}\;\LL{\dots}{\dots}\;\LL{f_1}{e_2}\;\LL{e_2}{f_1}\;\LL{b}{e_1}\;\LL{c}{c}\;\LL{e_1}{b}\;\LL{z}{a}}.$$
	    \end{lem}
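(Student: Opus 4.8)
The plan is to prove this by induction on $n$, following the same architecture as the companion Lemmas \ref{LemMovesTwosInTwo2}--\ref{LemMovesToThreeAndTwos}: a single reduction step that strips off one Form \ref{FormTwos} pair and appeals to the statement for $n-1$, with the base case $n=1$ verified by an explicit short path. For the base case I would write out $\tilde{\pp}$ and the target $\pp$ explicitly (seven columns each) and exhibit a concrete sequence of inner switch moves connecting them; this is a finite check.

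For the inductive step I would first rewrite $\tilde{\pp}$ so that the final block pair $e_n,f_n$ is exposed at the right end of the schematic block, exactly as the proof of Lemma \ref{LemMovesToThreeAndTwos} exposes $g_1,h_1$. I would then apply two inner switch moves (so that we stay inside $\ClassLab{\tilde{\pp}}$) chosen so that $d$ and $f_n$ come to sit as an isolated Form \ref{FormTwos} pair $\mtrx{\LL{f_n}{d}\;\LL{d}{f_n}}$ adjacent to $\LL{a}{z}$, while $e_n$ is pushed into the structural slot previously held by $d$. Projecting onto $\AAA\setminus\{d,f_n\}$ (using the projection/lifting principle recalled at the start of this appendix) then yields precisely the form of the lemma with parameter $n-1$, in which $e_n$ now plays the role of $d$ and the block is carried by $e_1,\dots,e_{n-1},f_1,\dots,f_{n-1}$. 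Applying the induction hypothesis produces a switch path to the $(n-1)$ target; lifting it back to the un-projected pair lets the isolated pair $\{d,f_n\}$ ride along. Since the $(n-1)$ target begins with $\mtrx{\LL{f_{n-1}}{e_n}\;\LL{e_n}{f_{n-1}}}$, with $e_n$ occupying the role of $d$, reinstating $\{d,f_n\}$ as the outermost pair reproduces exactly $\pp = \mtrx{\LL{a}{z}\;\LL{f_n}{d}\;\LL{d}{f_n}\;\LL{f_{n-1}}{e_n}\;\LL{e_n}{f_{n-1}}\;\dots\;\LL{z}{a}}$.

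The main obstacle is the explicit identification and verification of the two switch moves in the reduction step: one must check that the selected pairs satisfy the positional inequalities required of (inner) switches, that the projection lands on the $n-1$ form with the correct relabeling (the promotion of $e_n$ to the role of $d$), and that reinserting the stripped pair reconstitutes the stated target rather than some neighboring permutation. As in the companion lemmas, once these two switches and the relabeling are pinned down the remaining bookkeeping is routine. I note that one cannot simply obtain this lemma by reflecting Lemma \ref{LemMovesToThreeAndTwos}, since reversing the order of both rows interchanges left and right Rauzy induction and hence need not preserve the one-sided Rauzy Class; the induction must be carried out directly.
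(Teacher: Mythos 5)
Your high-level architecture (explicit inner switches, projection onto a sub-alphabet, reduction to a smaller case) is the right kind of argument, and your closing observation that one cannot simply reflect Lemma \ref{LemMovesToThreeAndTwos} is correct. However, the reduction step you propose cannot be carried out, and the obstruction is structural, not just a matter of finding the right two switches. You ask for an intermediate pair in which $\{d,f_n\}$ sits as the block $\mtrx{\LL{f_n}{d}\;\LL{d}{f_n}}$ adjacent to $\LL{a}{z}$ while the remaining letters form exactly the lemma's input with parameter $n-1$ and $e_n$ in the role of $d$ (this is forced, since you need the inductive output to be precisely the tail of $\pp$). But inner switch moves stay inside $\ClassLab{\tilde{\pp}}$, and $\Sof{\qq}$ is constant on a labeled Rauzy Class. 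For $n=2$ one computes $\Sof{\tilde{\pp}}=(a,c,z)(b,e_1,e_2,d,f_2,f_1)$, whereas your intermediate, with top row $a,f_2,d,b,c,e_1,f_1,e_2,z$ and bottom row $z,d,f_2,f_1,e_1,e_2,c,b,a$, has $\Sof{}=(a,c,z)(d,f_2,b,e_1,e_2,f_1)$; the first sends $f_2\mapsto f_1$ and the second sends $f_2\mapsto b$. Since the invariant disagrees, no inner switch path of any length reaches that configuration, and the induction never gets off the ground. Intuitively, you cannot extract the leftmost pair of the target and leave the rest of the pairing $(f_{n-1},e_n),\dots,(f_1,e_2),(b,c,e_1)$ untouched: the re-pairing of all the letters has to happen at once.

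The paper's proof peels off the \emph{other} end of the target and then delegates, rather than running a self-contained induction. A single $\{d,e_1\}$-switch on $\tilde{\pp}$ yields $\mtrx{\LL{a}{z}\;\LL{f_1}{*'}\;\LL{*'}{d}\;\LL{d}{f_1}\;\LL{b}{e_1}\;\LL{c}{c}\;\LL{e_1}{b}\;\LL{z}{a}}$, where $*'$ carries $e_2,f_2,\dots,e_n,f_n$; this isolates the three letters $b,c,e_1$ next to $\LL{z}{a}$ already in the pattern they occupy in $\pp$, and projecting onto $\AAA\setminus\{b,c,e_1\}$ leaves a pair of exactly the form treated in Lemma \ref{LemMovesTwosInTwo} (with $f_1$ and $d$ in the roles of $b$ and $c$ there), which finishes the rearrangement. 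If you wish to keep a genuine induction internal to this lemma, any proposed intermediate must first pass the $\Sof{}$-invariance test; the one you chose does not.
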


	    \begin{proof}
		We first perform a $\{d,e_1\}$-switch on
		      $$ \tilde{\pp} = \mtrx{\LL{a}{z}\;\LL{b}{f_1}\;\LL{c}{e_1}\;\LL{e_1}{*'}\;\LL{f_1}{d}\;\LL{*'}{c}\;\LL{d}{b}\;\LL{z}{a}}$$
		and arrive at
		      $$ \mtrx{\LL{a}{z}\;\LL{f_1}{*'}\;\LL{*'}{d}\;\LL{d}{f_1}\;\LL{b}{e_1}\;\LL{c}{c}\;\LL{e_1}{b}\;\LL{z}{a}}.$$
		If $n=1$, this is $\pp$. If $n>1$, we apply Lemma \ref{LemMovesTwosInTwo} by projecting to $\AAA\setminus\{b,c,e_1\}$.
	     
	    \end{proof}
	    
	\bibliographystyle{abbrv}
	\bibliography{../../bibfile2}

\begin{thebibliography}{10}

\bibitem{cBoi2012}
C.~Boissy.
\newblock Classification of {R}auzy classes in the moduli space of quadratic
  differentials.
\newblock {\em Discrete and Continuous Dynamical Systems}, 32(10):3421--3431,
  apr 2012.

\bibitem{cBoi2013}
C.~Boissy.
\newblock Labeled rauzy classes and framed translation surfaces.
\newblock {\em Annales de L'Institut Fourier}, 63(2):547--572, 2013.

\bibitem{cBuf2006}
A.~I. Bufetov.
\newblock Decay of correlations for the {R}auzy-{V}eech-{Z}orich induction map
  on the space of interval exchange transformations and the central limit
  theorem for the {T}eichm\"uller flow on the moduli space of abelian
  differentials.
\newblock {\em J. Amer. Math. Soc.}, 19(3):579--623 (electronic), 2006.

\bibitem{cDele2013}
V.~{Delecroix}.
\newblock {Cardinality of Rauzy classes}.
\newblock {\em Annales de l'institute Fourier}, 63(5):1651--1715, 2013.

\bibitem{cFick2012}
J.~Fickenscher.
\newblock {A Combinatorial Approach to Proving the Kontsevich-Zorch-Boissy
  Classification of Rauzy Classes}.
\newblock {\em ArXiv e-prints}, dec 2012.

\bibitem{cFick2014}
J.~Fickenscher.
\newblock Self-inverses, lagrangian permutations and minimal interval exchange
  transformations with many ergodic measures.
\newblock {\em Communications in Contemporary Mathematics}, 16(01):1350019,
  2014.

\bibitem{cKonZor1997}
M.~Kontsevich and A.~Zorich.
\newblock {\em Lyapunov exponents and {H}odge theory}, volume~24 of {\em Adv.
  Ser. Math. Phys.}
\newblock World Sci. Publ., River Edge, NJ, 1997.

\bibitem{cKonZor2003}
M.~Kontsevich and A.~Zorich.
\newblock Connected components of the moduli spaces of {A}belian differentials
  with prescribed singularities.
\newblock {\em Invent. Math.}, 153(3):631--678, 2003.

\bibitem{cMarMouYoc2005}
S.~Marmi, P.~Moussa, and J.-C. Yoccoz.
\newblock The cohomological equation for {R}oth-type interval exchange maps.
\newblock {\em J. Amer. Math. Soc.}, 18(4):823--872 (electronic), 2005.

\bibitem{cRau1979}
G.~Rauzy.
\newblock \'{E}changes d'intervalles et transformations induites.
\newblock {\em Acta Arith.}, 34(4):315--328, 1979.

\bibitem{cVe1982}
W.~A. Veech.
\newblock Gauss measures for transformations on the space of interval exchange
  maps.
\newblock {\em Ann. of Math. (2)}, 115(1):201--242, 1982.

\bibitem{cVia2006}
M.~Viana.
\newblock Ergodic theory of interval exchange maps.
\newblock {\em Rev. Mat. Complut.}, 19(1):7--100, 2006.

\bibitem{cZor2008}
A.~Zorich.
\newblock Explicit {J}enkins-{S}trebel representatives of all strata of abelian
  and quadratic differentials.
\newblock {\em J. Mod. Dyn.}, 2(1):139--185, 2008.

\end{thebibliography}
		
\end{document}